\DeclareMathAlphabet{\mathcal}{OMS}{cmsy}{m}{n}
\DeclareSymbolFont{largesymbols}{OMX}{cmex}{m}{n}
\newtheorem{thm}{Theorem}[section]
\newtheorem{defn}[thm]{Definition}
\newtheorem{lem}[thm]{Lemma}
\newtheorem{rem}[thm]{Remark}
\newtheorem{ex}[thm]{Example}
\newcommand{\defCategory}[2]{
  \newcommand{#1}{#2\defvariable}
  }
\newcommand{\defvariable}[2][]{
\if\relax\detokenize{#1}\relax  
   \if\relax\detokenize{#2}\relax
    \else
    ({#2})
    \fi
\else
  ^{{#1}}({#2})
\fi
 }
\newcommand{\lra}{\longrightarrow}
\newcommand{\xra}{\xrightarrow}
\newcommand{\dra}{\dashrightarrow}
\newcommand{\ra}{\rightarrow}
\newcommand{\gh}{{\rm gh\, }}
\newcommand{\Fgh}{{\rm Fgh\, }}
\newcommand{\cogh}{{\rm cogh\,}}
\newcommand{\Fcogh}{{\rm Fcogh\,}}
\newcommand{\add}{{\rm add\, }}
\newcommand{\Hom}{{\rm Hom \, }}
\newcommand{\End}{{\rm End\, }}
\newcommand{\Img}{{\rm Im}\,\,}
\newcommand{\Ker}{{\rm Ker}\,\,}
\newcommand{\Ext}{{\rm Ext}}
\newcommand{\E}{{\rm E}}
\newcommand{\Ve}{{\rm Vec}}
\newcommand{\dk}{{\rm dim_{_{K}}}}
\newcommand{\cpx}[1]{#1^{\bullet}}
\newcommand{\Dz}[1]{{\rm D}^+(#1)}
\newcommand{\Df}[1]{{\rm D}^-(#1)}
\newcommand{\Db}[1]{{\mathscr D}^b(#1)}
\newcommand{\C}[1]{{\mathscr C}(#1)}
\newcommand{\Cb}[1]{{\mathscr C}^b(#1)}
\newcommand{\K}[1]{{\mathscr K}(#1)}
\newcommand{\Kf}[1]{{\mathscr K}^-(#1)}
\newcommand{\Kb}[1]{{\mathscr K}^b(#1)}
\newcommand{\Modc}{\ensuremath{\mbox{{\rm -Mod}}}}
\newcommand{\modc}{\ensuremath{\mbox{{\rm -mod}}}}
\newcommand{\stmodc}[1]{#1\mbox{{\rm -{\underline{mod}}}}}
\newcommand{\Proj}[1]{#1\mbox{{\rm -Proj}}}
\newcommand{\proj}[1]{#1\mbox{{\rm -proj}}}
\newcommand{\thick}{{\rm thick}}
\newcommand{\otimesL}{\otimes^{\rm\bf L}}
\newcommand{\Oc}[3]{{#1}^{#2,#3}}
\DeclareMathOperator{\gldim}{gl.\!dim}
\newcommand{\Gr}[1]{#1\mbox{{\rm -Gr}}}
\newcommand{\gr}[1]{#1\mbox{{\rm -gr}}}
\newcommand{\perf}[1]{#1\mbox{{\rm -perf}}}
\newcommand{\Grperf}[1]{#1\mbox{{\rm -grperf}}}
\newcommand{\HomP}{{\rm Hom}^{\bullet}}
 \defCategory{\D}{\mathscr{D}}
\newcommand{\lla}{\longleftarrow}
\newcommand{\lraf}[1]{\stackrel{#1}{\lra}}
\newcommand{\llaf}[1]{\stackrel{#1}{\lla}}
\newcommand{\draf}[1]{\stackrel{#1}{\dra}}
\newcommand{\scr}[1]{\mathscr #1}
\newcommand{\al}[1]{\mathcal #1}
\newcommand{\proja}{\mathcal{P}_{\!\!\!\scriptscriptstyle{\mathcal{A}}}}
\newcommand{\projb}{\mathcal{P}_{\!\!\scriptscriptstyle{\mathcal{B}}}}
\newcommand{\sta}{\underline{\mathcal{A}}}
\newcommand{\stb}{\underline{\mathcal{B}}}
\newcommand{\diag}{{\rm diag}}
\begin{document}

\title{Symmetric approximation sequences, Beilinson-Green algebras and derived equivalences}

\author{\medskip Shengyong Pan \\

\small School of Mathematics and Statistics\\ Beijing Jiatong University\\
No. 3 Shangyuancun\\
Beijing, 100044, P. R. China\\
Email: shypan@bjtu.edu.cn}

\date{}

\maketitle

\begin{abstract}
In this paper, we will consider a class of locally $\Phi$-Beilinson-Green algebras, where $\Phi$ is an infinite
admissible set of the integers, and show that symmetric approximation sequences in $n$-exangulated categories give rise to derived equivalences between quotient algebras of locally $\Phi$-Beilinson-Green algebras in the principal diagonals modulo some factorizable ghost and coghost ideals by the locally finite tilting family. Then we get a class of derived equivalent algebras that have not been obtained by using previous techniques. From higher exact sequences, we obtain derived equivalences between subalgebras of endomorphism algebras by constructing tilting complexes, which generalizes Chen and Xi's result for exact sequences. From a given derived equivalence, we get derived equivalences between locally $\Phi$-Beilinson-Green algebras of semi-Gorenstein modules. Finally, from given graded derived equivalences of group graded algebras, we get derived equivalences between associated Beilinson-Green algebras of group graded algebras.
\end{abstract}

\renewcommand{\thefootnote}{\alph{footnote}}

\maketitle

\renewcommand{\thefootnote}{\alph{footnote}}
\setcounter{footnote}{-1} \footnote{2000 Mathematics Subject
Classification: primary 16E35,18G80; secondary 16D10,16G10.}
\renewcommand{\thefootnote}{\alph{footnote}}
\setcounter{footnote}{-1} \footnote{Keywords: symmetric approximation sequence, higher exact sequence, admissible set, derived equivalence, Beilinsion-Green algebra.}

\maketitle

\section{Introduction}\label{Intro}
In the representation theory of algebras, derived equivalences have been
shown to preserve many algebraic and geometric invariants and provide new connections.
However, in general, it is very difficult to describe the derived equivalence class of a given ring.
One idea is to study locally derived equivalences, that is, to establish some elementary derived equivalences between certain rings, and hope that derived equivalent rings can be related by a sequence of such elementary derived equivalences. Many such kind of derived equivalent rings comes from mutations sequences of objects in categories, where approximations play a central role. This occurs in many aspects in algebra and geometry, such as mutations of tilting modules \cite{Happel1998b}, mutations of cluster tilting objects \cite{Buan2006}, mutations of silting objects \cite{Aihara2012}, mutations of exceptional sequences \cite{Rudakov1990}, and mutations of modifying modules \cite{Iyama2014a} in the study of the NCCR conjecture (\cite[Conjecture 4.6]{Vandenbergh2004}).

It is interesting to know whether the mutation sequences always give derived equivalent endomorphism rings. If a mutation sequence is an Auslander-Reiten sequence over an Artin algebra, the endomorphism algebras are derived equivalent \cite{Hu2011}. And if this sequence is a triangle in algebraic triangulated category, the endomorphism algebras are also derived equivalent \cite{D}. However, this is not always true. For instance, the endomorphism rings of cluster tilting objects related by mutation sequences are not always derived equivalent. Also, in general, Auslander-Reiten triangles do not give rise to derived equivalences. In \cite{Hu2013a}, Hu, Koenig and Xi proved that certain
triangles with symmetric approximations and some conditions give rise to derived equivalences of quotient algebras of endomorphism algebras of objects in the sequences modulo some particularly defined ideals. After that Chen \cite{Chen2013} generalized their result to $n$-angles in $n$-angulated categories. Recently, Chen and Hu \cite{CH} introduced
symmetric approximation sequences in additive categories and weakly $n$-angulated categories which include (higher) Auslander-Reiten sequences (triangles) and showed that such sequences always give rise to derived equivalences between the quotient rings of endomorphism rings of objects in the sequences modulo some ghost and coghost ideals. Therefore, they unified the results in  \cite{Hu2013a} and \cite{Chen2013}.

In this paper, we first focus on higher exact sequences in additive categories, then we obtain derived equivalences between subalgebras of endomorphism algebras in the higher exact sequences. This result can be described as the follow theorem.

\begin{thm}(=Theorem 3.2)\label{pppp}
Let  $\al C$ be an additive category and $M$ be an object in $\al {C}$. Suppose that
$X\xra f M_1\xra {d_1} M_2\ra\cdots\xra {d_{n-1}} M_n\xra g Y$ is a higher $\add(M)$-exact sequence. Let
$$
A=\left\{\begin{pmatrix}
s_1&s_2 \\
fs_3 &s_4
\end{pmatrix}\in \begin{pmatrix}
\End_{\al C}(M) &\Hom_{\al C}(M,X) \\
\Hom_{\al C}(X,M) &\End_{\al C}(X)
\end{pmatrix}  \bigg| \
\begin{aligned}
s_3\in\Hom_{\al C}(M_1,M)& \; \text{ and there exists}\\
s_5 \in \End_{\al C}(M_1) & \; \text{such that}\; s_4f=fs_5.
\end{aligned}
\right\}
$$
and
$$
B=\left\{\begin{pmatrix}
s_1&s_2g \\
s_3 &s_4
\end{pmatrix}\in \begin{pmatrix}
\End_{\al C}(M) &\Hom_{\al C}(M,Y) \\
\Hom_{\al C}(Y,M) &\End_{\al C}(Y)
\end{pmatrix}  \bigg| \
\begin{aligned}
s_2\in\Hom_{\al C}(M,M_n)& \; \text{ and there exists}\\
 s_5 \in \End_{\al C}(M_n) & \; \text{such that}\; gs_4=s_5g.
\end{aligned}
\right\}
$$
be subrings of $\End_{\al C}(M\oplus X)$  and $\End_{\al C}(M\oplus Y)$, respectively. Then $A$ and $B$ are derived equivalent.
\end{thm}

We will investigate symmetric approximation sequences in $n$-exangulated categories which give rise to derived equivalences between quotient algebras of locally $\Phi$-Beilinson-Green algebras in the principal diagonals modulo some factorizable ghost and coghost ideals. For the definition of locally $\Phi$-Beilinson-Green algebras, we refer to Section 5. We construct locally finite tilting sets of complexes and show that the locally endomorphism rings of these complexes are isomorphic to quotient algebras of locally $\Phi$-Beilinson-Green algebras. Then we obtain
derived equivalences which have not been observed by using previous techniques. Consequently, we generalize the main result in \cite{PZ} to an $n$-exangulated category ($n\geq 3$) with an $n$-angle endo-functor $F$ and the locally $\Phi$-Beilinson-Green algebras. Note that a locally $\Phi$-Beilinson-Green algebra has a local unit, but it doesn't have a unit in general.
The result is the following theorem.

\begin{thm}(=Theorem \ref{Theorem-ghost})\label{ppp}
Let $(\al C,\mathbb{E},\mathfrak{s})$ be an $n$-exangulated category ($n\geq 1$) with an $n$-exangle endo-functor $F$,
and let $M$ be an object in $\mathcal {C}$. Suppose that $\Phi$ is an admissible set of $\mathbb{Z}$ and that $\mathcal {C}(M, F^i(X))=0=\mathcal {C}(Y,F^i(M))$ for all $0\neq i\in\Phi$. Let $X\xra f M_1\ra M_2\ra\cdots\ra M_n\xra g Y\draf{\delta}$
be an $n$-$\mathbb{E}$-exangle in $\mathcal {C}$ with $M_j\in\add(M)$ for all $j=1,\cdots,n$, such that $f$ is a left $\add_{\Oc{\al C}{F}{\Phi}}(M)$-approximation of $X$ and that $g$ is a right $\add_{\Oc{\al C}{F}{\Phi}}(M)$-approximation of $Y$ in $\Phi$-orbit category $\Oc{\al C}{F}{\Phi}$.
Then the quotient rings of locally $\Phi$-Beilinson-Green algebras
$$
\frac{\scr G^{\Phi,F}(X\oplus M)}{I}
\quad\text{and} \quad\frac{\scr G^{\Phi,F}(Y\oplus M)}{J}
$$
are derived equivalent, where $I=\diag(\cdots,\Fcogh_M (X\oplus M),\cdots)$ and $J=\diag(\cdots,\Fgh_M (Y\oplus M),\cdots)$ are ideals of $\scr G^{\Phi,F}(X\oplus M)$ and $\scr G^{\Phi,F}(M\oplus Y)$, respectively.
\end{thm}

In particular, if $\Phi=0$, then the quotient algebras
$\frac{\End(X\oplus M)}{\Fcogh_M (X\oplus M)}$ and $\frac{\End(M\oplus Y)}{\Fgh_M (Y\oplus M)}$
are derived equivalent.

We also construct derived equivalences of locally $\Phi$-Beilinson-Green algebras of semi-Gorenstein projective modules from a given derived equivalence.

\begin{thm}(=Theorem \ref{ps1})\label{p1p}
Let $\Phi$ be an admissible subset of $\mathbb{Z}$. Suppose that  $G: \Db{A\Modc}\lra\Db{B\Modc}$ is a derived equivalent between left coherent rings $A$ and $B$.
Denote by $\bar{G}$ the stable functor induced by $G$.
 If $X$ is a finitely presented semi-Gorenstein projective $A$-module, then locally $\Phi$-Beilinson-Green algebras $\scr G^{\Phi}(A\oplus X)$
and $\scr G^{\Phi}(B\oplus \bar{G}(X))$ are derived equivalent.
\end{thm}

Finally, we get derived equivalences between the Beilinson-Green algebras from $G$-graded derived equivalences between $G$-graded algebras $A$ and $B$ for a group $G$.

\begin{thm}(=Theorem \ref{GBG})\label{p2p} Suppose that there is a $G$-graded derived equivalence between $G$-graded algebras $A$ and $B$. Then there is a derived equivalence between the Beilinson-Green algebras $\overline{A}$ and $\overline{B}$.
\end{thm}

This paper is organized as follows.
In Section \ref{Pre}, we review some
basic facts on derived equivalences, approximations, ghost and coghost ideals and $n$-exangulated categories. In Section \ref{higer}, we give the definition of higher $\al D$-exact sequences, and we obtain derived equivalences between certain endomorphism subalgebras in the higher $\al D$-exact sequences, where $\al D$ is a subcategoy of an additive category $\al C$, and we prove Theorem \ref{pppp}.
In Section \ref{locallyfinite}, we introduce locally finite algebras, locally finite tilting families of complexes and locally $\Phi$-Beilinson-Green algebras with $\Phi$
an admissible subset of $\mathbb{Z}$.
In Section \ref{Derivedequiva}, we show that symmetric approximation sequences in $n$-exangulated categories give rise to derived equivalences between quotient algebras of locally $\Phi$-Beilinson-Green algebras in the principal diagonals modulo some ghost and coghost ideals by the locally finite tilting families of complexes, and we prove Theorem \ref{ppp}.
In section \ref{derivedequivalence}, we study derived equivalences of locally finite $\Phi$-Beilinson-Green algebras from a given derived equivalence, and from given graded derived equivalences of group graded algebras, we get derived equivalences between associated Beilinson-Green algebras of group graded algebras, Theorems \ref{p1p} and \ref{p2p} will be proved.
In Section \ref{example}, some examples are given to explicit our main theorems.

\section{Preliminaries}\label{Pre}
In this section, we shall review basic definitions and facts which will be useful in the proofs later on.

\subsection{Derived equivalences}
Throughout this paper, unless specified otherwise, $k$ will be a field.
We begin by briefly recalling some definitions and notations on
derived categories and derived equivalences.

Let $\mathscr{A}$ be an additive category. For two morphisms $\alpha:
X\ra Y$ and $\beta: Y\ra Z$, their composition is denoted by
$\alpha\beta$. A
complex $\cpx{X}=(X^i,d_{X}^i)$ over $\mathscr{A}$ is a sequence of
objects $X^i$ and morphisms $d_{X}^i$ in $\mathscr{A}$ of the form:
$$\cdots \ra X^i\stackrel{d_{X}^i}\ra X^{i+1}\stackrel{d_{X}^{i+1}}\ra
X^{i+1}\ra\cdots,$$ such that $d_{X}^id_{X}^{i+1}=0$ for all
$i\in\mathbb{Z}$. If $\cpx{X}=(X^i,d_{X}^i)$ and
$\cpx{Y}=(Y^i,d_{Y}^i)$ are two complexes, then a morphism $\cpx{f}:
\cpx{X}\ra\cpx{Y}$ is a sequence of morphisms $f^i: X^i\ra Y^i$ of
$\mathscr{A}$ such that $d^i_{X}f^{i+1}=f^id^i_{Y}$ for all
$i\in\mathbb{Z}$. The map $\cpx{f}$ is called a chain map between
$\cpx{X}$ and $\cpx{Y}$. The category of
complexes over $\mathscr{A}$ with chain maps is denoted by
$\C{\mathscr{A}}$. Let $\cpx{f}:\cpx{X}\ra \cpx{Y}$ be a
morphism of complexes. We say that $\cpx{f}$ is null-homotopic if
we get $f^i=d^ir^{i+1}+r^id^{i-1}$, where $r^i:X^i\ra Y^{i-1}$.
The homotopy category of complexes over $\mathscr{A}$ is denoted by
$\K{\mathscr{A}}$. If $\mathscr{A}$ is an abelian category, then a
morphism of complexes $\cpx{f}:\cpx{X}\ra \cpx{Y}$ is a
quasi-isomorphism if $H^i(\cpx{f}): H^i(\cpx{X})\stackrel{\sim}\ra
H^i(\cpx{Y})$ for $i\in \mathbb{Z}$, where $H^i(\cpx{X})$ denotes
the $i$-th cohomology group of the complex $\cpx{X}$. Denote by $\D{\mathscr{A}}$
the derived category of complexes over $\mathscr{A}$. It is
well known that, for an abelian category $\mathscr{A}$, the
categories $\K{\mathscr{A}}$ and $\D{\mathscr{A}}$ are triangulated
categories. For basic results on triangulated categories, we refer
the reader to \cite{Ha1} and \cite{N}.

Let $R$ be a commutative artinian ring with identity and let $A$ be an $R$-algebra.
Denote by $A\Modc$ and $A\modc$ the category of left $A$-modules and finitely presented left
$A$-modules, respectively. Note that if $A$ is a noethrian $R$-algebra, then $A\modc$ is an abelian category.
The full subcategory of $A\Modc$ and $A\modc$ consisting of
projective modules is denoted by $\Proj{A}$ and $\proj{A}$, respectively.
 Let $\Kb{A}$ denote the
homotopy category of bounded complexes of $A$-modules and let
$\Db{A}$ denote the bounded derived category of $A$-mod, respectively.

The following theorem is a key ingredient of Morita theory on derived equivalences for module categories of
rings or algebras which was established by Rickard \cite{Ri1}. For more details
on derived equivalences, we refer to
\cite{Ri1}.

\begin{thm} \label{R} \cite[Theorem \;6.4]{Ri1}
Let $A$ and $B$ be rings with identities. The following conditions are equivalent.

$(i)$ $\Db{A\Modc}$ and $\Db{B\Modc}$ are equivalent
as triangulated categories.

$(ii)$ $\Kf{\Proj{A}}$ and $\Kf{\Proj{B}}$ are equivalent as
triangulated categories.

$(iii)$ $\Kb{\Proj{A}}$ and $\Kb{\Proj{B}}$ are equivalent as
triangulated categories.

$(iv)$ $\Kb{\proj{A}}$ and $\Kb{\proj{B}}$ are equivalent as
triangulated categories.

$(v)$ $B$ is isomorphic to $\End_{\Db{A\Modc}}(\cpx{T})$ for some complex
$\cpx{T}$ in $\Kb{\proj{A}}$ satisfying

\qquad $(a)$ $\Hom_{\Db{A\Modc}}(\cpx{T},\cpx{T}[n])=0$
         for all $n\neq 0$.

\qquad $(b)$ $\add(\cpx{T})$, the category of direct summands of
          finite direct sums of copies of $\cpx{T}$, generates
          $\Kb{\proj{A}}$ as a triangulated category.
\end{thm}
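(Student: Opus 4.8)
The plan is to organise the five conditions around condition (v): I will show that each of (i)--(iv) implies (v), and conversely that (v) implies all of (i)--(iv), so that the stated equivalences follow. Two steps carry essentially all the content --- \emph{extracting} a tilting complex from an equivalence, and \emph{building} an equivalence out of a tilting complex --- and the common tool is the intrinsic description of $\Kb{\proj A}$: inside any of $\Db{A\Modc}$, $\Kb{\Proj A}$ or $\Kf{\Proj A}$ it is the full subcategory of objects $M$ such that, for every object $N$, one has $\Hom(M,N[i])=0$ for $i\gg0$ (equivalently, $M$ has finite projective dimension). This description is manifestly invariant under triangle equivalences, and $\Kb{\proj A}=\thick(A)$ is also the thick subcategory generated by the stalk complex $A$ concentrated in degree $0$.

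\emph{Extracting a tilting complex: each of (i)--(iv) $\Rightarrow$ (v).} Suppose, for instance, that $F\colon\Kf{\Proj A}\xrightarrow{\ \sim\ }\Kf{\Proj B}$ is a triangle equivalence (the other three cases are identical), and set $\cpx T:=F^{-1}(B)$, where $B$ is the stalk complex in degree $0$. Then $\Hom_{\Kf{\Proj A}}(\cpx T,\cpx T[n])\cong\Hom_{\Kf{\Proj B}}(B,B[n])$, which is $0$ for $n\neq0$ (there are no chain maps between stalks placed in distinct degrees) and is $\End_B(B)\cong B$ for $n=0$; this is condition (a), and it also gives $\End_{\Kf{\Proj A}}(\cpx T)\cong B$ (up to an opposite-ring convention). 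Next, $B$ satisfies the intrinsic finiteness condition above in $\Kf{\Proj B}$ --- for $N\in\Kf{\Proj B}$ one has $\Hom_{\Kf{\Proj B}}(B,N[i])\cong H^i(N)$, which vanishes for $i\gg0$ --- so $\cpx T=F^{-1}(B)$ satisfies it in $\Kf{\Proj A}$, i.e.\ $\cpx T\in\Kb{\proj A}$. Finally $\thick(\cpx T)=F^{-1}(\thick(B))=F^{-1}(\Kb{\proj B})=\Kb{\proj A}$, using both that $\thick(B)=\Kb{\proj B}$ and that $F$ carries the intrinsically-described $\Kb{\proj B}$ onto $\Kb{\proj A}$; this is condition (b). Hence $\cpx T$ is a tilting complex over $A$ with $\End(\cpx T)\cong B$, which is (v).

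\emph{Building the equivalence: (v) $\Rightarrow$ (i)--(iv).} This is the heart of the theorem. Given a tilting complex $\cpx T\in\Kb{\proj A}$ with $\End_{\Db{A\Modc}}(\cpx T)\cong B$ and $\Hom(\cpx T,\cpx T[n])=0$ for $n\neq0$, replace $\cpx T$ by a K-projective dg model and form its dg endomorphism algebra $\mathcal E:=\mathrm{RHom}_A(\cpx T,\cpx T)$. Condition (a) says $H^n(\mathcal E)=0$ for $n\neq0$ and $H^0(\mathcal E)\cong B$, so the canonical truncations give quasi-isomorphisms of dg algebras $\mathcal E\longleftarrow\tau_{\le0}\mathcal E\longrightarrow B$, whence an equivalence $\D{\mathcal E}\simeq\D{B\Modc}$ of derived categories of dg modules. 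On the other hand $\cpx T$ is a compact object of the cocomplete, compactly generated unbounded derived category $\D{A\Modc}$ (it is a perfect complex), and condition (b) --- which says $\thick(\cpx T)=\Kb{\proj A}=\D{A\Modc}^c$ --- forces $\cpx T$ to be a compact \emph{generator}, since the localizing subcategory generated by the compact objects is all of $\D{A\Modc}$. By Keller's theorem \cite{Keller1994} the functor $\mathrm{RHom}_A(\cpx T,-)\colon\D{A\Modc}\xrightarrow{\ \sim\ }\D{\mathcal E}$ is then a triangle equivalence, and composing gives $\D{A\Modc}\simeq\D{B\Modc}$. Since this equivalence is of the standard type $\mathrm{RHom}_A(\cpx T,-)$ with $\cpx T$ a bounded complex of finitely generated projectives, it (together with its quasi-inverse $-\otimesL_{\mathcal E}\cpx T$) shifts cohomological amplitude only by a bounded amount, and therefore restricts to equivalences $\Kb{\proj A}\simeq\Kb{\proj B}$ (on compact objects), $\Kb{\Proj A}\simeq\Kb{\Proj B}$, $\Kf{\Proj A}\simeq\Kf{\Proj B}$ and $\Db{A\Modc}\simeq\Db{B\Modc}$ --- that is, (iv), (iii), (ii) and (i). (Rickard's original argument \cite{Ri1} stays inside bounded and bounded-above complexes, constructing by hand a two-sided tilting complex of $B$-$A$-bimodules together with an explicit replacement functor; the dg version above is its streamlined form.)

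The step I expect to be the main obstacle is exactly this construction, (v) $\Rightarrow$ (i): one must rigidify $\cpx T$ so that its derived self-maps assemble into a genuine dg algebra quasi-isomorphic to the discrete ring $B$ --- that is, promote the ring isomorphism $\End(\cpx T)\cong B$ to a dg- (or bimodule-) level statement --- and then invoke the nontrivial fact that a compact generator of a compactly generated algebraic triangulated category realises it as the derived category of the dg endomorphism algebra of that generator. Verifying the intrinsic description of $\Kb{\proj A}$ used throughout, and checking that the constructed equivalence descends correctly to the bounded, perfect and projective levels, are the remaining technical points; the rest (the vanishing (a), the generation (b), and the bookkeeping relating the various homotopy categories of projectives) is routine.
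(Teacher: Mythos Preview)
The paper does not give its own proof of this statement: Theorem~\ref{R} is quoted verbatim from Rickard \cite[Theorem~6.4]{Ri1} as background, with no argument supplied. So there is no ``paper's own proof'' to compare your proposal against.

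That said, your sketch is a reasonable outline of the modern dg-category approach to Rickard's theorem, and you explicitly flag that it is \emph{not} Rickard's original argument. Rickard in \cite{Ri1} works entirely inside homotopy categories: he first proves the intrinsic characterisation of $\Kb{\proj A}$ you invoke, then for (v)$\Rightarrow$(iv) he builds the equivalence by hand, constructing objects $X^i$ and resolutions step by step rather than passing through a dg endomorphism algebra and Keller's theorem~\cite{Keller1994}. Your route via $\mathcal{E}=\mathrm{RHom}_A(\cpx{T},\cpx{T})$, the truncation quasi-isomorphisms $\mathcal{E}\leftarrow\tau_{\le 0}\mathcal{E}\to B$, and compact generation is cleaner and now standard, but it imports machinery (unbounded derived categories, dg modules, localizing subcategories) that Rickard's 1989 paper deliberately avoided. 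Either approach is acceptable; just be aware that since the present paper only cites the result, your proposal is really a proof of a theorem the paper takes for granted rather than a reconstruction of anything the paper does.
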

\noindent{\bf Remarks.} (1) The rings $A$ and $B$ are said to be derived equivalent if $A$
and $B$ satisfy the conditions of the above theorem.

(2) The complex $\cpx{T}\in \Kb{\proj{A}}$
in Theorem \ref{R} (v) which satisfies the conditions (a) and (b) is
called a tilting complex for $A$.

Keller also \cite{Keller1994} gave the Morita theory on derived categories by differential graded categories which is useful to study derived equivalences for locally finite algebras via locally finite
tilting sets of complexes in the later.

\begin{thm}\cite[Theorem 9.2]{Keller1994} \label{k}
Let $\al{A}$ and $\al B$ be small $k$-categories. Then the following are equivalent.

$(i)$ There is a $\al A$-$\al B$-bimodue $Y$ such that  the left derived functor $\bf L T_Y: \al A\to\al B$ is a derived equivalence.

$(ii)$ There is an triangulated equivalence between $\D{\al A}$ and $\D{\al B}$.

$(iii)$ $\al B$ is equivalent to a full  subcategory $\al U$ of $\D{\al A}$ whose objets form a set of small generators and  $\al D(\al A )(U, V[i])=0$ for $i\neq0$ and $U,V\in\al U$.
\end{thm}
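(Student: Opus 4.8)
The plan is to establish the cycle $(i)\Rightarrow(ii)\Rightarrow(iii)\Rightarrow(i)$. The implication $(i)\Rightarrow(ii)$ is immediate, since by definition a derived equivalence is in particular a triangulated equivalence $\D{\al A}\to\D{\al B}$. For $(ii)\Rightarrow(iii)$, let $\Psi\colon\D{\al B}\to\D{\al A}$ be a triangulated equivalence. The inputs here are two standard facts about the derived category of a small $k$-category: the representable modules $b^{\wedge}=\al B(-,b)$, $b\in\al B$, form a set of small (compact) generators of $\D{\al B}$; and, by the Yoneda lemma together with projectivity of $b^{\wedge}$, one has $\D{\al B}(b^{\wedge},b'^{\wedge}[i])\cong H^{i}\al B(b,b')$, which vanishes for $i\neq 0$ and equals $\al B(b,b')$ for $i=0$ because $\al B$ is concentrated in degree $0$. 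Set $\al U=\{\Psi(b^{\wedge})\mid b\in\al B\}$. Compactness and the property of being a generating set are categorical, hence transported by $\Psi$; moreover $\Psi$ induces $k$-linear isomorphisms $\D{\al A}(\Psi b^{\wedge},\Psi b'^{\wedge}[i])\cong\al B(b,b')$ for $i=0$ and $0$ otherwise, compatibly with composition. Thus $b\mapsto\Psi(b^{\wedge})$ is a fully faithful $k$-functor $\al B\to\D{\al A}$ whose essential image $\al U$ satisfies exactly the conditions of $(iii)$.

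The heart of the argument is $(iii)\Rightarrow(i)$, where I would work with DG enhancements. Choose a cofibrant (K-projective) replacement $pU\xra{\sim}U$ of each $U\in\al U$ inside the DG category $\C{\al A}_{dg}$ of DG $\al A$-modules, and let $\al P$ be the full DG subcategory on the objects $\{pU\mid U\in\al U\}$, with morphism spaces the Hom-complexes $\Hom_{\al A}(pU,pU')$. Because each $pU$ is cofibrant, $H^{i}\al P(pU,pU')=\D{\al A}(U,U'[i])$, which by $(iii)$ is concentrated in degree $0$; hence the canonical morphism $\al P\to H^{0}(\al P)$ is a quasi-equivalence identifying $\al P$ with the ordinary $k$-category $H^{0}(\al P)$, which in turn is isomorphic to $\al U\cong\al B$. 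Regarding $\al P$ as a module object in DG $\al A$-modules over itself yields an $\al P$-$\al A$-bimodule $Y_{0}$, whose underlying right $\al A$-module along each object $pU$ is the DG module $pU$ itself; transporting $Y_{0}$ across the quasi-equivalence $\al B\xra{\sim}\al P$, and passing to the opposite side as needed to match the bimodule conventions in the statement, produces the desired $\al A$-$\al B$-bimodule $Y$.

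It then remains to verify that $\mathbf{L}T_{Y}$, concretely the derived tensor functor $-\otimesL_{\al B}Y\colon\D{\al B}\to\D{\al A}$, is an equivalence. This functor commutes with arbitrary coproducts, being a left adjoint (with right adjoint $\mathbf{R}\Hom_{\al A}(Y,-)$); it carries the compact generator $b^{\wedge}$ to $pU_{b}\cong U_{b}$, hence sends generators to compacts; and it is fully faithful on the $b^{\wedge}$ since $\D{\al A}(U_{b},U_{b'}[i])=H^{i}\al P(pU_{b},pU_{b'})=\D{\al B}(b^{\wedge},b'^{\wedge}[i])$ compatibly with composition. A standard dévissage then finishes the proof: the full subcategory of $\D{\al B}$ on which $-\otimesL_{\al B}Y$ is fully faithful is triangulated and closed under coproducts, hence equals $\D{\al B}$; consequently the essential image is a triangulated subcategory of $\D{\al A}$ closed under coproducts and containing the generating set $\al U$, so it is all of $\D{\al A}$. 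This uses that $\D{\al A}$ is compactly generated and that localizing subcategories are closed under the relevant operations.

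I expect the main obstacle to be precisely the $(iii)\Rightarrow(i)$ step and, within it, the DG-module bookkeeping: the existence and functoriality of cofibrant replacements, the identification of $-\otimesL_{\al B}Y$ on representables with the prescribed objects of $\al U$, and the compact-generation dévissage that upgrades full faithfulness on a generating set to an equivalence. None of these is difficult in isolation, but each relies on the DG-module formalism of \cite{Keller1994} (the model structure on $\C{\al A}$, smallness of representables, closure properties of localizing subcategories); since the statement is quoted there as Theorem 9.2, in the present paper it suffices to invoke that reference, the sketch above being the route by which one reconstructs it.
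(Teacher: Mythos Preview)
The paper does not supply its own proof of this theorem; it is stated with the attribution \cite[Theorem 9.2]{Keller1994} and used as a black box (the only place it is invoked is in the proof of Theorem~\ref{ll}, which simply says ``This result follows from Theorem~\ref{k}''). So there is nothing in the paper to compare your argument against beyond the bare citation.

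Your sketch is a correct and standard reconstruction of Keller's argument: the cycle $(i)\Rightarrow(ii)\Rightarrow(iii)\Rightarrow(i)$, with the substantive step $(iii)\Rightarrow(i)$ handled via a DG enhancement (cofibrant replacements of the objects of $\al U$, the resulting DG category $\al P$ being quasi-equivalent to $\al B$ by the vanishing of higher $\Hom$'s, and the bimodule $Y$ read off from $\al P$), followed by the compact-generation d\'evissage to promote full faithfulness on representables to a global equivalence. You also correctly observe at the end that, for the purposes of this paper, invoking the reference is all that is required; your write-up goes well beyond what the paper itself does.
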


\subsection{Approximations, cohomological approximation and ghosts}

Let $\al C$ be an additive category, and let $\al D$ be a full additive subcategory of $\al C$ and an object $C$ in $\al C$.  A morphism $f:D\ra C$ in $\al C$ is called a right
$\al D$-approximation of $C$ if $D\in \al D$ and the induced map $\al (D', f): \al C(D', D)\ra \al C(D',C)$ is surjective for all $D'\in \al D$. Dually, we can define a left
$\al D$-approximation of $C$ in $\al C$.

Let  $\mathbb Z$ be the
set of all integers. Recall that a subset of $\mathbb Z$ containing $0$ is called an
admissible subset of $\mathbb Z$ (\cite{Hu2013}) if the following condition is
satisfied:

If $i,j,k\in \Phi$ satisfy that $i+j+k\in \Phi$, then $i+j\in \Phi$ if and
only if $j+k\in \Phi$.

Let $F$ be an endo-functor from $\al C$ to itself. Suppose that $\Phi$ is an admissible subset of $\mathbb{Z}$.
Recall that the left and right cohomological approximations
with respective to $\Phi$ in a triangulated category $\al C$ have been introduced in \cite{Hu2013a}.  Let $\al {D}$ be a full
subcategory of $\al C$ and  $X$ an object of $\al C$.  A
morphism $f:X\ra D$ is called a left ($\al {D}, F, \Phi$)-approximation of $X$ if $D\in\al {D}$, and
for any morphism $f': X\ra F^iD'$ with $D'\in\al {D}$ and $i\in\Phi$, there is a morphism $f'': D\ra F^iD' $
such that $f'=ff''$.  In the case that $\Phi$ is an admissible subset, we have the $\Phi$-orbit category $\al C^{F, \Phi}$,
and that $f$ is a left ($\al {D}, F, \Phi$)-approximation of $X$ is equivalent to saying that $\al C^{F, \Phi}(D,D')\ra \al C^{F, \Phi}(X,D')$ is surjective for all $D'\in \al D$.
Similarly, we have the notion of a right ($\mathcal {D}, F, \Phi$)-approximation of $X$ \cite{CH}.  A
morphism $g:D_X\ra X$ is called a right ($\mathcal {D}, F, \Phi$)-approximation of $X$, if for any morphism $D\ra F^iX$ with $D\in\mathcal {D}$ and $i\in\Phi$, there is a morphism $g'': D\ra F^iD_X $
such that $g=g''F^ig$.  In the case that $\Phi$ is an admissible subset, we have the $\Phi$-orbit category $\al C^{F, \Phi}$,
and that $f$ is a right ($\mathcal {D}, F, \Phi$)-approximation of $X$ is equivalent to saying that $\al C^{F, \Phi}(D,D_X)\ra \al C^{F, \Phi}(D,X)$ is surjective for all $D\in \al D$.

By an ideal $\al I$ of $\al C$ we mean that an additive subgroup $\al I(A, B)\subseteq \al C(A,B)$, for all $A,B\in \al C$, such that the composite $\alpha\beta$ of the morphisms
$\alpha, \beta\in\al C$ belongs to $\al I$ provided that either $\alpha$ or $\beta$ is in $\al I$. The quotient category $\al C/\al I$ of $\al C$ modulo an ideal $\al I$ has the same objects as $\al C$ and has the morphism
$\al C/\al I(A,B):=\al C(A,B)/\al I(A,B)$ for objects $A,B\in\al C$.

Let $\al D$ be a full additive subcategory of $\al C$, a morphism $f$ in $\al C$ is called a $\al D$-ghost provided that $\al (\al D, f)=0$. All $\al D$-ghosts in $\al C$ form an ideal of $\al C$, called the ideal of $\al D$-ghosts and denote by $\gh_{\al D}$.
Dually,  a morphism $g$ in $\al C$ is called a $\al D$-coghost provided that $\al (g,\al D)=0$. All $\al D$-coghosts in $\al C$ form an ideal of $\al C$, called the ideal of $\al D$-coghosts and denote by $\cogh_{\al D}$.
Let $\al F_{\al D}$ be the ideal of morphisms in $\al C$ factorizing through an object in $\al D$. The intersection $\gh_{\al D}\cap\al F_{\al D}$ is called the ideal of factorizable $\al D$-ghosts in $\al C$, denote by $\Fgh_{\al D}$. Similarly,
the intersection $\cogh_{\al D}\cap\al F_{\al D}$ is called the ideal of factorizable $\al D$-coghosts in $\al C$, denote by $\Fcogh_{\al D}$.

The following lemma describes the properties of ghost and coghost ideals.
\begin{lem} \cite[Lemma 2.1]{CH} \label{2.2}
$(1)$ If $X\in\al C$ admits a right $\al D$-approximation $f_X: D_X\ra X$, then
$$
\gh_{\al D}(X,Y)=\{g\in\al C(X,Y)|f_{X}g=0\}.
$$

$(2)$  If $Y\in\al C$ admits a left $\al D$-approximation $f^Y: Y\ra D^Y$, then
$$
\cogh_{\al D}(X,Y)=\{g\in\al C(X,Y)|gf^Y=0\}.
$$

$(3)$ If $X\in \al D$, then $\gh_{\al D}(X,Y)=0$ and $\cogh_{\al D}(X,Y)=\Fcogh_{\al D}(X,Y)$.

$(4)$ If $Y\in \al D$, then $\cogh_{\al D}(X,Y)=0$ and $\gh_{\al D}(X,Y)=\Fgh_{\al D}(X,Y)$.
\end{lem}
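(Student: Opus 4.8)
The plan is to unwind the three definitions involved --- $\al D$-ghost, $\al D$-coghost, and ($\al D$-)approximation --- and to feed the universal property of an approximation into a short diagram chase, always respecting the convention that the composite of $\alpha\colon X\to Y$ and $\beta\colon Y\to Z$ is written $\alpha\beta$. For $(1)$, set $\Gamma=\{g\in\al C(X,Y)\mid f_Xg=0\}$. The inclusion $\gh_{\al D}(X,Y)\subseteq\Gamma$ is immediate: if $g$ is a $\al D$-ghost then the induced map $\al C(D,g)$ vanishes for every $D\in\al D$, and evaluating the case $D=D_X$ at the element $f_X\in\al C(D_X,X)$ gives $f_Xg=0$. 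For $\Gamma\subseteq\gh_{\al D}(X,Y)$, take $g\in\Gamma$, fix $D\in\al D$ and $h\in\al C(D,X)$; since $f_X$ is a right $\al D$-approximation, $\al C(D,f_X)\colon\al C(D,D_X)\to\al C(D,X)$ is surjective, so $h=h'f_X$ for some $h'$, and then $hg=h'f_Xg=0$. As $D$ and $h$ were arbitrary, $g$ is a $\al D$-ghost.

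Part $(2)$ is the formal dual of $(1)$, with $f_X$ replaced by the left $\al D$-approximation $f^Y\colon Y\to D^Y$ and the covariant functor $\al C(D,-)$ replaced by the contravariant functor $\al C(-,D)$. Explicitly, if $g\in\cogh_{\al D}(X,Y)$ then evaluating $\al C(g,D^Y)=0$ at $f^Y$ gives $gf^Y=0$; conversely, if $gf^Y=0$ then for any $D\in\al D$ and $k\in\al C(Y,D)$ the surjectivity of $\al C(f^Y,D)$ yields $k=f^Yk'$, so $gk=gf^Yk'=0$, and hence $g$ is a $\al D$-coghost.

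For $(3)$ and $(4)$ the key remark is that when $X\in\al D$ the identity $\mathrm{id}_X$ is a right $\al D$-approximation of $X$, and when $Y\in\al D$ the identity $\mathrm{id}_Y$ is a left $\al D$-approximation of $Y$. Substituting $f_X=\mathrm{id}_X$ into $(1)$ gives $\gh_{\al D}(X,Y)=\{g\mid g=0\}=0$, and symmetrically $(2)$ gives $\cogh_{\al D}(X,Y)=0$ when $Y\in\al D$. For the two remaining equalities, the inclusions $\Fcogh_{\al D}\subseteq\cogh_{\al D}$ and $\Fgh_{\al D}\subseteq\gh_{\al D}$ hold by definition; the reverse inclusions follow by a factorization: any $\al D$-coghost $g\colon X\to Y$ with $X\in\al D$ equals $\mathrm{id}_X\,g$, hence factors through $X\in\al D$, lies in $\al F_{\al D}$, and so $g\in\cogh_{\al D}\cap\al F_{\al D}=\Fcogh_{\al D}$; dually any $\al D$-ghost $g$ with $Y\in\al D$ equals $g\,\mathrm{id}_Y$ and lies in $\Fgh_{\al D}$.

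I do not expect a genuine obstacle: the whole argument is a definition chase. The only point requiring attention is to keep the order of composition consistent with the stated convention, so that the variance of the Hom-functor used --- covariant $\al C(D,-)$ for ghosts, contravariant $\al C(-,D)$ for coghosts --- is matched by the direction in which a right, respectively left, approximation allows one to factor an arbitrary test morphism.
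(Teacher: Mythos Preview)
Your proof is correct and complete; it is the standard definition chase, with the compositions kept in the right order for the paper's convention. The paper itself does not prove this lemma but simply cites it from \cite[Lemma~2.1]{CH}, so there is nothing to compare beyond noting that your argument is exactly the elementary verification one would expect that reference to contain.
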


\subsection{Symmetric approximation sequences}

In this subsection, we will review the definition  and some properties of symmetric approximation sequences.

Let $\al C$ be an additive category, and let $\al D$ be a full additive subcategory of $\al C$.  A right $\al D$-approximation sequence in $\al C$ is a sequence
$$
D_m\ra D_{m-1}\ra \cdots\ra D_1\ra D_0 \ra Y
$$
with $D_i\in\al D$ for $i=0,1,\cdots m$, such that the following sequence is an exact sequence
$$
\al C(D, D_m)\ra \al C(D, D_{m-1})\ra\cdots\ra \al C(D, D_0)\ra \al C(D, Y)\ra 0
$$
for all $D\in\al D$. We can define a left $\al D$-approximation sequence dually. Recall that a pseudo-kernel of a morphism $g: X\ra Y$ is a morphism $f: Z\ra X$ such that
$$
\al C(C, Z)\ra \al C(C,X)\ra \al C(C, Y)$$
is exact for all $C\in\al C$. The pseudo-cokernal is defined dually.

\begin{defn}\cite[Definition 3.1]{CH}  Let $\al C$ be an additive category, and let $\al D$ be a full additive subcategory of $\al C$. A sequence
$$
X\xra {f_0} D_1\xra {f_1}\cdots \xra{f_{n-1}} D_{n}\xra{f_n} Y\quad \quad (\star)
$$
in $\al C$ is called a symmetric $\al D$-approximation sequence if the following three conditions are satisfied.

$(1)$ The sequence $D_1\xra {f_1}\cdots \xra{f_{n-1}} D_{n}\xra{f_n} Y$ is a right $\al D$-approximation sequence in $\al C$;

$(2)$ The sequence $X\xra {f_0} D_1\xra {f_1}\cdots \xra{f_{n-1}} D_{n}$ is a left $\al D$-approximation sequence in $\al C$;

$(3)$ The morphism $f_0$ is a pseudo kernel of $f_1$ and the morphism $f_n$ is a pseudo cokernel of $f_{n-1}$.
\end{defn}

The above definition, the sequence $(\star)$ is called a higher $\al D$-split sequence, if we replace the condition $(3)$ by the following condition

$(3')$  The morphism $f_0$ is a kernel of $f_1$ and the morphism $f_n$ is a cokernel of $f_{n-1}$.

\begin{lem}\cite[Lemma 3.3]{CH}\label{self-ortho}
Let $\mathcal{C}$ be an additive category, and let  $M$ be an object in $\mathcal{C}$. Suppose that
$\cpx{P}$:
$$0\lra P^{0}\lraf{d^{0}}P^{1}\lra\cdots\lra P^{n-1}\lraf{d^{n-1}} P^n\lra 0$$
is a complex over $\mathcal{C}$ such that $P^i\in\add(M)$ for all $i>0$, and  that the following two conditions are satisfied:

\smallskip
\begin{enumerate}
\item[(1)] $H^i(\HomP_{\mathcal{C}}(M, \cpx{P}))=0$ for all $i\neq 0, n$;

\item[(2)] $H^i(\HomP_{\mathcal{C}}(\cpx{P}, M))=0$ for all $i\neq -n$.
\end{enumerate}

{\parindent=0pt Then} $\cpx{P}$ is self-orthogonal as a complex both in $\Kb{\mathcal{C}/\cogh_M}$ and in $\Kb{\mathcal{C}/\Fcogh_M}$.
\label{Lemma-tilting-complex-left}
\end{lem}

\subsection{$n$-exangulated categories}

In this subsection we follow \cite[Section 2]{HLN} in order to recall the definition of an $n$-exangulated category.
One of the purposes of introducing $n$-exangulated categories is to provide a
common ground for studying the different settings of higher homological algebra.
Note that $n$-angulated and $n$-exact categories are $n$-exangulated categories.

 Throughout this subsection, we assume that $\mathcal{C}$ is an additive category.
We assume $\mathcal{C}$ comes equipped with a biadditive functor $\mathbb{E}: \mathcal{C}^{op}\times \mathcal{C}\rightarrow {\rm Ab}$. Thus,
 for any pair of objects $A, C\in\mathcal{C}$, the functors
 $$
 \mathbb{E}(C, -):\mathcal{C}\rightarrow {\rm Ab}
 $$
 and
  $$
\mathbb{E}(-, A):\mathcal{C}^{op}\rightarrow {\rm Ab}
 $$
  are additive. Furthermore, each morphism $f:X\ra Y$ in $\al C$ gives rise to abelian group homomorphism
  $\mathbb{E}(C, f):\mathbb{E}(C, X)\ra \mathbb{E}(C, Y)$ and $\mathbb{E}(f,A):\mathbb{E}(Y, A)\ra \mathbb{E}(X, A)$.

We are now to recall the definition of an $n$-exangulated category.

\begin{defn} \cite[Definition 2.32]{HLN}
An $n$-exangulated category is a triplet $(\al C,\mathbb{E},\mathfrak{s})$ of additive category $\al C$,
biadditive functor $\mathbb{E}: \al C^{op}\times\al C\ra Ab$, and its exact realization $\mathfrak{s}$, satisfying
the following conditions.

$(EA1)$ The class of $\mathfrak{s}$-inflations is closed under composition. Dually, the class of $\mathfrak{s}$-deflations is closed under composition.

$(EA2)$ For each $\delta\in\mathbb{E}(D,A)$ and $c\in\al C(C,D)$, if $\mathfrak{s}(c^{\mathbb{E}}\delta)=[\cpx{X}]$ and $\mathfrak{s}(\delta)=[\cpx{Y}]$,
then there exists a morphism $\cpx{x}=(1_A,f^1,\cdots,f^n,c):\cpx{X}\ra\cpx{Y}$ realising $(1_A,c):c^{\mathbb{E}}\delta\ra\delta$ such that $\mathfrak{s}((d_X^0)_{\mathbb{E}}\delta)=[\cpx{M_f}]$, where
$ \cpx{M_f}$ is the mapping cone. Such an $\cpx{f}$ is called a good lift of $(1_A,c)$.

$(EA2^{op})$ Dual of $(EA2)$.

\end{defn}

\begin{lem}\cite[Definitions 2.9 and 2.13,Proposition 3.6]{HLN}\label{Lemma-n-exangle}
Let $(\al C,\mathbb{E},\mathfrak{s})$ be an $n$-exangulated category, and let
$$
X^0\xra{d^0_X} X^1\xra{d^1_X}\cdots\xra{d_X^{n}} X^{n+1}\draf{\delta}
$$
  be a distinguished $n$-exangle in ${\al C}$. Then we have the following:

  $(1)$. $d^i_Xd_X^{i+1}=0$ for all $i=0,1,2,\cdots, n-1$, $(d_X^0)_{\mathbb{E}}\delta=0$ and $(d_X^n)^{\mathbb{E}}\delta=0$;

  $(2)$.  For all $Z\in {\al C}$, we have the following exact sequences

$$
\al C(Z,X^0)\xra{\al C(Z,d^0_X)}\al C(Z,X^1)\xra{\al C(Z,d^1_X)}\cdots\xra{\al C(Z,d^{n-1}_X)} \al C(Z,X^n)\xra{\al C(Z,d^n_X)} \al C(Z,X^{n+1})\xra{(\delta_{\sharp}^{\mathbb{E}})_Z}\mathbb{E}(Z,X^0),
$$
and
$$
\al C(X^{n+1},Z)\xra{\al C(d^n_X,Z)}\al C(X^n,Z)\xra{\al C(d^{n-1}_X,Z)}\cdots\xra{\al C(Z,d^1_X)} \al C(X^1,Z)\xra{\al C(d^0_X,Z)} \al C(X^0,Z)\xra{(\delta^{\sharp}_{\mathbb{E}})_Z}\mathbb{E}(X^{n+1},Z);
$$

  $(3)$. Suppose that $2\leq m< n$. Each commutative diagram
 $$\xymatrix@M=1mm{
X^0\ar[r]^{d^0_X}\ar[d]^{h^0} &X^1\ar[r]^{d^1_X}\ar[d]^{h^1} &\cdots\ar[r]^{d^{m-1}_X}& X^m\ar[r]^{d^m_X}\ar[d]^{h^m}&X^{m+1}\ar[r]^{d^{m+1}_X}\ar@{..>}[d]^{h^{m+1}} &\cdots\ar[r]^{d^n_X} &X^{n+1}\ar@{..>}[d]^{h^{n+1}}\ar@{..>}[r]^{\delta} &\\
Y^0\ar[r]^{d^0_Y} &Y^1\ar[r]^{d^1_Y} &\cdots\ar[r]^{d^{m-1}_Y}&Y^m\ar[r]^{d^m_Y} &Y^{m+1}\ar[r]^{d^{m+1}_Y}&\cdots\ar[r]^{d^n_Y} &Y^{n+1}\ar@{..>}[r]^{\rho} &\\
}$$
can be completed in ${\al C}$ to a morphism of $n$-exangles.
\end{lem}

\begin{lem}\label{5.13}
Let
$$
\xymatrix@M=1mm{
X^0\ar[r]^{d^0_X}\ar[d]^{h^0} &X^1\ar[r]^{d^1_X}\ar[d]^{h^1} &\cdots\ar[r]^{d^{m-1}_X}& X^m\ar[r]^{d^m_X}\ar[d]^{h^m}&X^{m+1}\ar[r]^{d^{m+1}_X}\ar[d]^{h^{m+1}} &\cdots\ar[r]^{d^n_X} &X^{n+1}\ar[d]^{h^{n+1}}\ar@{..>}[r]^{\delta} &\\
Y^0\ar[r]^{d^0_Y} &Y^1\ar[r]^{d^1_Y} &\cdots\ar[r]^{d^{m-1}_Y}&Y^m\ar[r]^{d^m_Y} &Y^{m+1}\ar[r]^{d^{m+1}_Y}&\cdots\ar[r]^{d^n_Y} &Y^{n+1}\ar@{..>}[r]^{\rho} &\\
}
$$
be any morphism of $n$-exangles. Then the following are equivalent.

(1) $h^0$ factors through $d^0_X$;

(2) $(h^0)_{\mathbb{E}}\delta=(h^{n+1})^{\mathbb{E}}\rho=0$;

(3) $h^{n+1}$ factors through $d^n_Y$.
\end{lem}

\begin{proof}
By Lemma \ref{Lemma-n-exangle}, we have the following exact sequences
$$
\al C(-,X^0)\xra{\al C(-,d^0_X)}\al C(-,X^1)\xra{\al C(-,d^1_X)}\cdots\xra{\al C(-,d^{n-1}_X)} \al C(-,X^n)\xra{\al C(-,d^n_X)} \al C(-,X^{n+1})\xra{(\delta_{\sharp}^{\mathbb{E}})}\mathbb{E}(-,X^0).
$$
and
$$
\al C(X^{n+1},-)\xra{\al C(d^n_X,-)}\al C(X^n,-)\xra{\al C(d^{n-1}_X,-)}\cdots\xra{\al C(d^1_X,-)} \al C(X^1,-)\xra{\al C(d^0_X,-)} \al C(X^0,-)\xra{(\delta^{\sharp}_{\mathbb{E}})}\mathbb{E}(X^{n+1},-).
$$
in ${\rm Ab}$ are exact. If $h^0$ factors through $d^0_X$, then by the exact sequence, $h^0$ is in $\Ker(\delta^{\sharp}_{\mathbb{E}})$, that is,
$(h^0)_{\mathbb{E}}\delta=0$, consequently, $(h^{n+1})^{\mathbb{E}}\rho=(h^0)_{\mathbb{E}}\delta=0$.

\end{proof}

\medskip

\begin{defn}\cite[Definition 2.31]{BA21} \label{exangulated functor}
Let $(\al C,\mathbb{E},\mathfrak{s})$ and $(\al C,\mathbb{E},\mathfrak{s}')$ be  $n$-exangulated categories.
An additive functor $F$ from ${\al T}$ to ${\al T}'$ is called an {\em $n$-exangulated functor} if  there is a natural transformation
$$
\Upsilon=\{\Upsilon_{(C,A)}\}_{(C,A)\in\al C^{op}\times\al C}: \mathbb{E}\Longrightarrow  \mathbb{E}'(F-,F-)
$$
of functors $\al C^{op}\times\al C\ra AB$, such that $\mathfrak{s}(\delta)=[\cpx{X}]$ implies $\mathfrak{s}'(\Upsilon_{(X^{n+1},X^0)}(\delta))=[F(\cpx{X})]$.
\end{defn}

\medskip
Let  $(\al C,\mathbb{E},\mathfrak{s})$ and $(\al C',\mathbb{E}',\mathfrak{s})$ be $n$-exangulated categories.
By Definition \ref{exangulated functor}, $F$ is an $n$-exangulated functor  from ${\al C}$ to ${\al C}'$, then we have the following:
$$
F(X^0)\xra{F(d^0_X)} F(X^1)\xra{F(d^1_X)}\cdots\xra{F(d_X^{n})} F(X^{n+1})\draf{\Upsilon_{(X^{n+1},X^0)}(\delta)}
$$
is an $\mathfrak{s}'$-distinguished $n$-exangle whenever
$$
X^0\xra{d^0_X} X^1\xra{d^1_X}\cdots\xra{d_X^{n}} X^{n+1}\draf{\delta}
$$
is an $\mathfrak{s}$-distinguished $n$-exangle.
Therefore, if $F$ is an $n$-exangle endo-functor between $\al C$, we conclude that
$$
F^i(X^0)\xra{F^i(d^0_X)} F^i(X^1)\xra{F^i(d^1_X)}\cdots\xra{F^i(d_X^{n})} F^i(X^{n+1})\draf{\Theta(\delta)},
$$
where $\Theta(\delta):=\Upsilon_{(F^i(X^{n+1}),F^i(X^0))}(\Upsilon_{(F^{i-1}(X^{n+1}),F^{i-1}(X^0))}\cdots(\Upsilon_{(X^{n+1},X^0)}(\delta)))\in\mathbb{E}(F^i(X^{n+1}),F^i(X^0))$ is an $\mathfrak{s}$-distinguished $n$-exangle. Thanks to Raphael Bennett-Tennenhaus for explanations of $n$-exangulated functors.

\subsection{Symmetric approximation in n-exangulated categories}

Now let $(\al C,\mathbb{E},\mathfrak{s})$ be an $n$-exangulated category, and let $F$ be an $n$-exangulated functor from ${\al C}$ to itself.  Suppose that $\Phi$ is an admissible subset of $\mathbb{Z}$, and $\Oc{\al C}{F}{\Phi}$ is the $\Phi$-orbit category of ${\al C}$. Then one may ask whether the $\Phi$-orbit category is again naturally $n$-exangulated.

\medskip
The following lemma will be useful in the proof of Theorem \ref{Theorem-ghost}.

\begin{lem}\label{4.2}
Let $(\al C,\mathbb{E},\mathfrak{s})$ be an $n$-exangulated category ($n\geq 1$) with an $n$-exangulated endo-functor $F$,
and let $M$ be an object in $\mathcal {C}$. Suppose that
$$
X\xra f M_1\ra M_2\ra\cdots\ra M_n\xra g Y\draf{\delta}
$$
is an $n$-exangle in $\mathcal {C}$ with $M_j\in\add(M)$ for all $j=1,\cdots,n$, and that $f$ is a left $\add_{\Oc{\al C}{F}{\Phi}}(M)$-approximation
of $X$ and that $g$ is a right $\add_{\Oc{\al C}{F}{\Phi}}(M)$-approximation of $Y$ in $\Phi$-orbit category $\Oc{\al C}{F}{\Phi}$. Then the complex
$$
0\lra X\xra f M_1\ra M_2\ra\cdots\lra M_n\lra 0$$
is self-orthogonal in $\Kb{{\al C}^{F,\Phi}/\cogh_{\al D}}$ and $\Kb{{\al C}^{F,\Phi}/\Fcogh_{\al D}}$, where $\al D=\add_{{\al C}^{F,\Phi}}(M)$.
\end{lem}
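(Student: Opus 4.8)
The plan is to show that the complex $\cpx P := (0 \to X \xra{f} M_1 \to \cdots \to M_{n-2} \to 0)$, viewed in $\Kb{{\al T}^{F,\Phi}/\cogh_{\al D}}$ (resp. $\Kb{{\al T}^{F,\Phi}/\Fcogh_{\al D}}$), satisfies $\Hom(\cpx P, \cpx P[j]) = 0$ for all $j \neq 0$. Since $\cpx P$ is a bounded complex concentrated in degrees $0, -1, \dots, -(n-2)$ (up to a shift convention), it suffices to check this for $j \in \{\pm 1, \dots, \pm(n-2)\}$. First I would compute morphisms in the homotopy category directly: a chain map $\cpx P \to \cpx P[j]$ with $j > 0$ has only finitely many possibly-nonzero components, each landing in some $\mathcal{C}(M_a, M_b)$ or $\mathcal{C}(X, M_b)$ or $\mathcal{C}(M_a, X)$ of the orbit category, and I would argue that each such chain map is null-homotopic modulo coghosts. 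The key structural input is Lemma \ref{Lemma-n-angle}(3), which lets me complete partial commutative ladders built from the $n$-angle $X \to M_1 \to \cdots \to M_{n-2} \to Y \to \Sigma X$ to morphisms of $n$-angles; combined with the cohomological property (Lemma \ref{Lemma-n-angle}(2)), this produces the homotopies.

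Concretely, here is the order of steps. (i) Fix $j$ with $1 \le j \le n-2$ and a chain map $\phi\colon \cpx P \to \cpx P[j]$; using that $\cpx P$ is the "stupid truncation" of the $n$-angle, rewrite the relevant components of $\phi$ as maps between terms of the $n$-angle. (ii) Use that $f$ is a left $\add_{{\al T}^{F,\Phi}}(M)$-approximation of $X$: by Lemma \ref{2.2}(2), $\cogh_{\al D}(X, M_b) = \{\, t : ft = 0\,\}$, so a coghost out of $X$ is exactly one killed by precomposition with $f$ — this is what lets the top component of $\phi$ be absorbed into a homotopy modulo coghosts. (iii) Propagate along the complex: the defining relations $f_i f_{i+1} = 0$ (Lemma \ref{Lemma-n-angle}(1)) together with the cohomological long exact sequences for $\mathcal{C}(-, M_b)$ and $\mathcal{C}(M_a, -)$ applied to the $n$-angle give, at each spot, the existence of the homotopy term $r^i$ with $\phi^i \equiv d^i r^{i+1} + r^i d^{i-1}$ modulo the ideal $\cogh_{\al D}$ (resp. $\Fcogh_{\al D}$). (iv) For negative shifts $j < 0$, run the dual bookkeeping, now using that $g$ is a right approximation of $Y$ and reading the $n$-angle from the $Y$-end; alternatively, invoke the rotation axiom (F2) to reduce to the positive case. (v) Finally, observe that the passage from $\cogh_{\al D}$ to $\Fcogh_{\al D}$ costs nothing: by Lemma \ref{2.2}(3), since $X \in \al D$ is false in general but each $M_i \in \al D$, the relevant coghost groups $\cogh_{\al D}(X, M_b)$ coincide with $\Fcogh_{\al D}(X, M_b)$, and $\cogh_{\al D}(M_a, M_b) = 0 = \Fcogh_{\al D}(M_a, M_b)$, so the two quotient categories agree on all the Hom-groups that enter the computation, and the two self-orthogonality statements are in fact the same statement.

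The main obstacle I anticipate is bookkeeping in step (iii): one must simultaneously track (a) which component of $\phi$ lives in which Hom-group of the orbit category ${\al T}^{F,\Phi}$ — recalling that $\Oc{\al T}{F}{\Phi}(A,B) = \bigoplus_{i \in \Phi} \mathcal{T}(A, F^i B)$ — and (b) that completing ladders via Lemma \ref{Lemma-n-angle}(3) is only possible because the $n$-angle has exactly $n-2$ "middle" terms, so the shift $j$ cannot exceed the length of the complex, which is precisely why the finiteness of $\Phi$ and the bound $j \le n-2$ are used. The subtlety is that Lemma \ref{Lemma-n-angle}(3) completes a ladder only from the $(m{+}1)$-st column onward (for $2 \le m < n$), so for each value of $j$ one has to choose the right $m$ and the right starting data; I would organize this as an induction on $j$, peeling off one homotopy term at a time, and the approximation hypotheses on $f$ and $g$ furnish exactly the base cases at the two ends of the complex. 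Everything else — closedness of $\pentagon$ under summands, the orbit category being weakly $n$-angulated (Lemma \ref{prop-phi-orbit-n-angle}), and the identification of ghost/coghost ideals via Lemma \ref{2.2} — is used as a black box.
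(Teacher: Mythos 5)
Your overall strategy is sound, but it is not the route the paper takes, and the difference is worth recording. The paper's proof is a reduction: it observes that the hypotheses make $X\xra{f}M_1\ra\cdots\ra M_{n-2}\xra{g}Y$ a symmetric $\add_{\Oc{\al T}{F}{\Phi}}(M)$-approximation sequence, records the two resulting cohomology computations $H^i(\Oc{\al T}{F}{\Phi}(M,\cpx{P}))$ and $H^i(\Oc{\al T}{F}{\Phi}(\cpx{P},M))$, and then invokes \cite[Lemma 3.3]{CH} as a black box. What you propose is, in effect, to reprove that cited lemma by hand: a component-by-component construction of null-homotopies modulo $\cogh_{\al D}$, using the cohomological long exact sequences of the $n$-angle (Lemma \ref{Lemma-n-angle}) together with the two approximation hypotheses. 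Your route is self-contained and makes visible exactly where each hypothesis enters; the paper's route is shorter and isolates the homotopy bookkeeping in a reusable statement about symmetric approximation sequences in arbitrary additive categories. Your step (v) is the right observation and explains why both quotient versions can be stated at once: for $j\neq 0$ every Hom-group carrying a component of a chain map $\cpx{P}\ra\cpx{P}[j]$ is of the form $(-,M_b)$, where both ideals vanish by Lemma \ref{2.2}(4), or of the form $(M_a,X)$, where $\cogh_{\al D}$ and $\Fcogh_{\al D}$ coincide by Lemma \ref{2.2}(3).

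Three local points need repair before this becomes a proof. First, in step (ii) the formula ``$\cogh_{\al D}(X,M_b)=\{t:ft=0\}$'' does not typecheck and is not what Lemma \ref{2.2}(2) says: $\cogh_{\al D}(X,M_b)=0$ outright since $M_b\in\al D$, and what absorbs the top component $\phi^0\colon X\ra M_j$ is simply the left approximation property of $f$ (every morphism from $X$ into $\al D$ factors through $f$); the coghost characterization you actually need from Lemma \ref{2.2}(2) is $\cogh_{\al D}(W,X)=\{t\mid tf=0\}$, and it is needed for the components landing in $X$ in the negative-shift case --- which is where the real work lies, since there the chain-map and homotopy identities only hold modulo a nonzero ideal. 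Second, reducing $j<0$ to $j>0$ via the rotation axiom (F2) does not work: rotating the $n$-angle replaces $\cpx{P}$ by a complex built from $M_2,\dots,M_{n-2},Y$, not by a shift of $\cpx{P}$; you must run the dual argument using that $g$ is a right approximation. Third, the bound $|j|\leq n-2$ has nothing to do with the finiteness of $\Phi$; it is just the amplitude of the bounded complex, and the finiteness of $\Phi$ plays no role in the argument beyond keeping the orbit-category Hom-spaces finite direct sums.
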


\begin{proof} Let $\al D=\add_{\Oc{\al C}{F}{\Phi}}(M)$. Then by our assumptions,
$$
X\xra f M_1\ra M_2\ra\cdots\ra M_n\xra g Y$$
is a symmetric $\al D$-approximation sequence.  Denote the complex
$X\xra f M_1\ra M_2\ra\cdots\ra M_n$ by $\cpx{P}$ and put $X$ in degree zero.
By the condition (1) of symmetric $\al D$-approximation sequence, applying $\Oc{\al C}{F}{\Phi}(M,-)$ to $\cpx{P}$ results in a sequence
$$
 0\lra \Oc{\al C}{F}{\Phi}(M,X)\xra{\Oc{\al C}{F}{\Phi}(M,f)} \Oc{\al C}{F}{\Phi}(M,M_1)\lra \Oc{\al C}{F}{\Phi}(M,M_2)\lra\cdots\lra \Oc{\al C}{F}{\Phi}(M,M_n)\lra 0$$
with $H^i(\Hom_{{\Oc{\al C}{F}{\Phi}}}(M,\cpx{P}))=0$ for all $i\neq 0, n$. By the condition (2) of symmetric $\al D$-approximation sequence applying $\Oc{\al C}{F}{\Phi}(-,M)$ to $\cpx{P}$ results in a sequence
$$
 0\lra \Oc{\al C}{F}{\Phi}(M_n, M)\lra \cdots \Oc{\al C}{F}{\Phi}(M_1,M)\ra \Oc{\al C}{F}{\Phi}(X, M)\lra 0$$
 with $H^i(\Hom_{{\Oc{\al C}{F}{\Phi}}}(\cpx{P}, M))=0$ for all $i\neq -n$.
Then by Lemma \ref{self-ortho}, the complex
 $$
 \cpx{P}: 0\lra X\xra f M_1\ra M_2\ra\cdots\ra M_n$$
with $X$ in degree zero is self-orthogonal in $\Kb{{\al C}^{F,\Phi}/\cogh_{\al D}}$ and $\Kb{{\al C}^{F,\Phi}/\Fcogh_{\al D}}$.
\end{proof}

\section{Higher exact sequences and derived equivalences for subalgebras}\label{higer}

We introduce the following definition of higher $\al D$-exact sequences.

\begin{defn} \label{higher} Let $\al C$ be an additive category, and let $\al D$ be a full subcategory of $\al C$. A sequence
$$
X\xra {f} D_1\xra {f_1}\cdots \xra{f_{n-1}} D_{n}\xra{g} Y\quad \quad (\star)
$$
in $\al C$ is called a higher $\al D$-exact sequence if the following three conditions are satisfied.

$(1)$ $D_i\in \al D$ for $i=1,\cdots, n$

$(2)$ There are two exact sequences

$$
 0\lra \Hom_{\al C}(X\oplus D, X)\xra {f^*} \Hom_{\al C}(X\oplus D,  D_1)\ra\cdots\ra \Hom_{\al C}(X\oplus D, D_n)\xra {g^*} \Hom_{\al C}(X\oplus D, Y)\quad (\dagger);
$$

$$
 0\lra \Hom_{\al C}(Y, Y\oplus D)\xra {g_*} \Hom_{\al C}(D_n, Y\oplus D)\ra\cdots\ra \Hom_{\al C}(D_1,Y\oplus D)\xra {f_*} \Hom_{\al C}(X, Y\oplus D)\quad (\ddagger)
$$
for every object $D\in \al D$.

\end{defn}

Note that if $n=1$, then higher $\al D$-exact sequences are in the sense of exact $\al D$-sequences introduced by Chen and Xi in \cite{CX}.

\begin{thm}\,\label{4.1}
Let  $\al C$ be an additive category, and let $M$ be an object in $\al {C}$. Suppose that
 $$X\xra f M_1\xra {d_1} M_2\ra\cdots\xra {d_{n-1}} M_n\xra g Y$$
 is a higher $\add(M)$-exact sequence. Let
$$
A=\left\{\begin{pmatrix}
s_1& s_2 \\
fs_3 &s_4
\end{pmatrix}\in \begin{pmatrix}
 \End_{\al C}(M)& \Hom_{\al C}(M,X)\\
\Hom_{\al C}(X,M) & \End_{\al C}(X)
\end{pmatrix}  \bigg| \
\begin{aligned}
s_3\in\Hom_{\al C}(M_1,M)& \; \text{ and there exists}\\
s_5 \in \End_{\al C}(M_1) & \; \text{such that}\; s_4f=fs_5.
\end{aligned}
\right\},
$$
and
$$
B=\left\{\begin{pmatrix}
s_1&s_2g \\
s_3 &s_4
\end{pmatrix}\in \begin{pmatrix}
\End_{\al C}(M) &\Hom_{\al C}(M,Y) \\
\Hom_{\al C}(Y,M) &\End_{\al C}(Y)
\end{pmatrix}  \bigg| \
\begin{aligned}
s_2\in\Hom_{\al C}(M,M_n)& \; \text{ and there exists}\\
 s_5 \in \End_{\al C}(M_n) & \; \text{such that}\; gs_4=s_5g.
\end{aligned}
\right\},
$$
be subrings of $\End_{\al C}(M\oplus X)$  and $\End_{\al C}(M\oplus Y)$, respectively. Then $A$ and $B$ are derived equivalent.
\end{thm}

\begin{rem} This theorem is the higher version of Chen and Xi \cite[Proposition 2.4]{CX}. We use a different approach to prove this result in the following.
\end{rem}

Before we prove the Theorem \ref{4.1}, we give some rather preliminaries. This construction comes from Chen's result in an abelian category in \cite{Chen2014}, we modify her idea in an additive category.

Let $W:=M\oplus X \oplus\bigoplus^n_{i=1} M_i\oplus Y$. We thus have an endomorphism algebra $\Gamma:=\End(W)$ of $W$ as follows:
$$\Gamma=
\begin{pmatrix}
 \End(X) &  \Hom(X,M)  &  \Hom(X,M_1) & \cdots & \Hom(X,M_n)& \Hom(X,Y)\\
\Hom(M,X)      &\End(M) & \Hom(M,M_1) & \cdots & \Hom(M,M_n)&\Hom(M,Y)\\
\Hom(M_1,X)& \Hom(M_1,M)    & \Hom(M_1,M_1)  & \cdots  & \Hom(M_1,M_n)& \Hom(M_1,Y)\\
  \vdots & \vdots & \vdots & \vdots& \vdots\\
  \Hom(M_{n-1},X) &\Hom(M_{n-1},M)    & \Hom(M_{n-1},M_1)&\cdots  & \Hom(M_{n-1},M_n)& \Hom(M_{n-1},Y)\\
  \Hom(M_n,X) &\Hom(M_n,M) &\Hom(M_n,M_1) &\cdots &\End(M_n)&\Hom(M_n,Y)\\
\Hom(Y,X)      &\End(Y,M) & \Hom(Y,M_1) & \cdots & \Hom(Y,M_n)&\End(Y)\\
\end{pmatrix}.
$$

We consider the subalgebra $\Lambda$ of $\Gamma$
$$\Lambda
=
\begin{pmatrix}
\widetilde{\End(X)}  &  \widetilde{\Hom(X,M)}     &  \widetilde{\Hom(X,M_1)} & \cdots & \widetilde{\Hom(X,M_n)}& \widetilde{\Hom(X,Y)}\\
 \Hom(M,X)      &\End(M) & \Hom(M,M_1) & \cdots & \Hom(M,M_n)&\widetilde{\Hom(M,Y)}\\
   \Hom(M_1,X)& \Hom(M_1,M)    & \Hom(M_{n-2},M_1)  & \cdots  & \Hom(M_1,M_n)& \widetilde{\Hom(M_1,Y)}\\
  \vdots & \vdots & \vdots & \vdots& \vdots\\
  \Hom(M_{n-1},X) &\Hom(M_{n-1},M)    & \Hom(M_{n-1},M_1)&\cdots  & \Hom(M_{n-1},M_n)& \widetilde{\Hom(M_{n-1},Y)}\\
  \Hom(M_n,X) &\Hom(M_n,M) &\Hom(M_n,M_1) &\cdots &\End(M_n)&\widetilde{\Hom(M_n,Y)}\\
\Hom(Y,X)      &\End(Y,M) & \Hom(Y,M_1) & \cdots & \Hom(Y,M_n)&\widetilde{\End(Y)}\\
\end{pmatrix}.
$$

where
$$
\begin{array}{rl}
\widetilde{\End(X)}=\{t\in\End(X)\mid tf=ft'\;\text{for some}\; t'\in\End_{\al C}(M_1) \},\\
\widetilde{\Hom(X,M)}=\{t\in\Hom(X,M)\mid t \;\text{factors through}\; f \;\text{in}\; \mathcal {C}\},\\
\widetilde{\Hom(X,M_i)}=\{t\in\Hom(X,M_i)\mid t \;\text{factors through}\; f \;\text{in}\; \mathcal {C}, i=1,\cdots,n\},\\
\widetilde{\Hom(X,Y)}=\{t\in\Hom(X,Y)\mid t \;\text{factors through}\; f\; \text{and}\; g\;\text{in}\; \mathcal {C}\},\\
\widetilde{\Hom(M,Y)}=\{t\in \Hom(M,Y)\mid t \;\text{factors through}\; g \;\text{in}\; \mathcal {C} \},\\
\widetilde{\Hom(M_i,Y)}=\{t\in \Hom(M,Y)\mid t \;\text{factors through}\; g \;\text{in}\; \mathcal {C},i=1,\cdots,n \},\\
\widetilde{\End(Y)}=\{t\in \End(Y)\mid gt=t'g\;\text{for some} \; t'\in\End_{\al C}(M_n) \}.
\end{array}
$$

\begin{lem}\cite[Lemma\; 4.2]{Xi}\label{lem3}
Suppose that $\Lambda$ is a subring of $\Gamma$ with the same identity.
\begin{itemize}

\item[{\rm (1)}] The restriction functor $\mathcal {F}: \Gamma\modc\ra \Lambda\modc$ is an exact faithful functor, and has a right adjoint
$\mathcal {G}=\Hom(_\Lambda\Gamma_\Gamma,-): \Lambda\modc\ra \Gamma\modc$ and a left adjoint $\mathcal {H}=\Gamma\otimes_\Lambda-: \Lambda\modc\ra \Gamma\modc$. In particular, $\mathcal {H}$ preserves projective modules and
$\mathcal {G}$ preserves injective modules.
\item[{\rm (2)}] The functor $\mathcal {H}=\Gamma\otimes_\Lambda-: \proj{\Lambda}\ra \proj{\Gamma}$ which sends $\Lambda$ to $\Gamma$ is faithful.
\end{itemize}
\end{lem}

Recall that an additive category $\mathcal{C}$ is idempotent complete (or Karoubi envelope) if for every
idempotent $p: C\ra C$, that is, $p^2 = p$, there is a decomposition $C\simeq K\oplus K'$ such that $p\simeq \begin{pmatrix}
0&0 \\
0&1
\end{pmatrix}$.
Note that the additive category $\mathcal{C}$ is idempotent complete if and only if every idempotent has a kernel.
For more details of Karoubi's construction on the idempotent complete of an additive category, we refer to \cite{Ka}.

Let $\mathcal{C}$ be an additive category. The idempotent completion
of $\mathcal{C}$ is denoted by $\widehat{\mathcal{C}}$ and is defined as follows. The objects of $\widehat{\mathcal{C}}$ are the pairs $(C, p)$, where $C$ is an object of $\mathcal{C}$ and $p:C\ra C$ is an idempotent morphism. A morphism in $\widehat{\mathcal{C}}$ from $(C, p)$
to $(D, q)$ is a morphism $f:C\ra D \in\mathcal{C}$ such that $fp=qf=f$. For any object $(C, p)$ in
$\widehat{\mathcal{C}}$, the identity morphism $1_{(C, p)} = p$.
There is a fully faithful additive functor $i_{\mathcal {C}}:\mathcal {C}\ra \widehat{\mathcal{C}}$ defined as follows. For
an object $C$ in $\mathcal{C}$, we have that $i_{\mathcal {C}}(C)=(C, 1_C)$ and for a morphism $f$ in $\mathcal {C}$, we have that
$i_{\mathcal {C}}(f)=f$. Every additive category $\mathcal{C}$ can be fully faithfully embedded into an idempotent complete additive category $\widehat{\mathcal{C}}$.

\begin{lem}
Let $\al C$ be an additive category.
Then the additive functor $\Hom_{\widehat{\al C}}((W,1_W),-): \widehat{\add W}\lra \Gamma\proj$ is an equivalence of additive categories, where $\widehat{\add W}$ is the idempotent complete of $\add W$.
\end{lem}

\begin{proof} Clearly, the additive functor $\Hom_{\widehat{\al C}}((W,1_W),-)$ is fully faithful. To see that it is
dense, let $P\in\Gamma\proj$. Then we have $P\oplus Q\simeq n\Gamma$. Hence there is an idempotent $e\in\End(n\Gamma)$ such that $P=\Ker(e)$.
Therefore there is an idempotent $f\in\End(nW)$ such that $\Hom_{\widehat{\al C}}((W,1_W),(nW,f))=e$.
It follows that $nW=\Ker(f)\oplus\Img(f)$ and $\Ker(f)$ is in $\add W$. Then $\Hom_{\widehat{\al C}}((W,1_W),(\Ker(f),1))=P$
since $\Hom_{\al C}(W,-)$ is left exact.
\end{proof}

Let $\mathcal{G}$ be the inverse functor of $\Hom_{\widehat{\al C}}((W,1_W),-)$, and denote by $\mathcal{F}$ the composition of the functors  $-_\Lambda\otimes\Gamma$ and $\mathcal{G}$.
We then have the following commutative diagram:

$$\xymatrix{
\proj{\Lambda} \ar[rrr]^{\Gamma\otimes_\Lambda-}\ar[drrr]_{\mathcal{F}}&&& \proj{\Gamma}\ar@<1ex>[d]^{\mathcal{G}}\\
&&&\widehat{\add W}\ar@<1ex>[u]^{\Hom_{\mathcal {C}}(W,-)}
.}$$
By Lemma \ref{lem3}, we know that $\mathcal{F}$ is a faithful functor. Denote the image of $F$ by $\mathcal {S}$. Then $\mathcal {S}$
is a subcategory of $\widehat{\add V}$, but it is not necessarily a full subcategory of $\widehat{\add V}$.

As in \cite{C}, we have the following isomorphisms:
$$\aligned
\widetilde{\Hom(X,M)}\simeq\Hom_{\Lambda}(\Lambda e_{11},\Lambda e_{22})
\simeq\widetilde{\Hom_{\Gamma}(\Gamma e_{11},\Gamma e_{22})}
\simeq\Hom_{\mathcal {S}}(\mathcal{G}(\Gamma e_{11}),\mathcal{G}(\Gamma e_{22}))\\
\simeq\Hom_{\mathcal {S}}(X,M),
\endaligned$$
where $e_{11}, e_{22}$ are idempotents of $\Lambda$, and $\widetilde{\Hom_{\Gamma}(\Gamma e_{11},\Gamma e_{22})}$ is a subring of $\Hom_{\Gamma}(\Gamma e_{11},\Gamma e_{22})$.
Consequently,
$$\aligned
\widetilde{\Hom(X,M)}\simeq\Hom_{\mathcal {S}}(X,M),
\widetilde{\Hom(M,Y)}\simeq\Hom_{\mathcal {S}}(M,Y),
\widetilde{\End(X)}\simeq\End_{\mathcal {S}}(X),\\
\End(M)\simeq\End_{\mathcal {S}}(M), \Hom(M,X)\simeq\Hom_{\mathcal {S}}(M,X), \Hom(Y,M)\simeq\Hom_{\mathcal {S}}(Y,M),\\
\widetilde{\Hom(M,Y)}\simeq\Hom_{\mathcal {S}}(M,Y), \Hom(Y,X)\simeq\Hom_{\mathcal {S}}(Y,X), \widetilde{\End(Y)}\simeq\End_{\mathcal {S}}(Y),\\
\endaligned$$
and
$$\aligned
\widetilde{\Hom(X,M_i)}\simeq\Hom_{\mathcal {S}}(X,M_i),
\widetilde{\Hom(M_i,Y)}\simeq\Hom_{\mathcal {S}}(M_i,Y),
\Hom(M_i,X)\simeq\End_{\mathcal {S}}(M_i,X),\\
\Hom(Y,M_i)\simeq\Hom_{\mathcal {S}}(Y,M_i),
\Hom(M_i,M_j)\simeq\Hom_{\mathcal {S}}(M_i,M_j),
\endaligned$$
for $1\leq i,j\leq n$.

{\bf The proof of Theorem \ref{4.1}}
We then simplify the proof by the above construction.

Since $\widetilde{\Hom(X,M)}\simeq\Hom_{\mathcal {S}}(X,M)$ and $\widetilde{\Hom(M,Y)}\simeq\Hom_{\mathcal {S}}(M,Y)$,
we have the following exact sequences,
$\Hom_{\mathcal {S}}(M_1,M)\xra{\Hom_{\mathcal {S}}(f,M)} \Hom_{\mathcal {S}}(X,M)\ra 0,
\;\text{and}\;
\Hom_{\mathcal {S}}(M,M_n)\xra{\Hom_{\mathcal {S}}(M,g)} \Hom_{\mathcal {S}}(M,Y)\ra 0.$
Hence, the morphism $X\xra f M_1$ is a left $\add M$-approximation in $\mathcal {S}$, and the morphism $M_n\xra g Y$ is a right $\add M$-approximation in $\mathcal {S}$. Verify that there are two exact sequences
$$
0\lra \Hom_{\mathcal {S}}(M, X)\xra {\Hom_{\mathcal {S}}(M, f)} \Hom_{\mathcal {S}}(M,  M_1)\ra\cdots\ra \Hom_{\mathcal {S}}(M, M_n)\xra {\Hom_{\mathcal {S}}(M, g)} \Hom_{\mathcal {S}}(M, Y)\lra 0,
$$
and
$$0\lra \Hom_{\mathcal {S}}(Y, M)\xra {\Hom_{\mathcal {S}}(g, M)} \Hom_{\mathcal {S}}(M_n, M)\ra\cdots\ra \Hom_{\mathcal {S}}(M_1,M)\xra {\Hom_{\mathcal {S}}(f, M)} \Hom_{\mathcal {S}}(X, M)\lra 0.
$$
Set the complex $\cpx{P}:  0\lra X\xra {f}  M_1 \xra {d_1}\cdots\lra  M_{n-1}
\xra {\overline{d_{n-1}}} M_n\oplus M\lra 0$, where $\overline{d_{n-1}}=(0,d_{n-1}):M_{n-1}\ra M_n\oplus M$.
Since $H^i(\Hom_{\al S}(M,\cpx{P}))=0$ for all $i\neq n$ and $H^i(\Hom_{\al S}(\cpx{P},M))=0$ for all $i\neq -n$,
it follows from \cite[Lemma 2.1]{Hoshino2003} that the complex $\cpx{P}$
is self-orthonal in $\Kb{\mathcal{S}}$.

Let $V=M\oplus X$. Hence
$$\End_{\mathcal{S}}(V)\simeq\begin{pmatrix}
\End_{\mathcal {S}}(M)&       \Hom_{\mathcal {S}}(M, X)\\
\Hom_{\mathcal {S}}(X,M)     & \End_{\mathcal {S}}(X)
\end{pmatrix}
=\begin{pmatrix}
\End_{\mathcal {C}}(M)&       \Hom_{\mathcal {C}}(M, X)\\
\widetilde{\Hom_{\mathcal {S}}(X,M})     & \widetilde{\End_{\mathcal {C}}(X)}
\end{pmatrix}=A.
$$
Therefore, we have a complex over $\End_{\mathcal{S}}(V)$ of the form
$$
\cpx{T}: 0\lra \Hom_{\mathcal {S}}(V, X)\xra {\Hom_{\mathcal {S}}(V, f)} \Hom_{\mathcal {S}}(V,  M_1)\ra\cdots\ra \Hom_{\mathcal {S}}(V, M_n\oplus M)\lra 0.
$$
Then we have to show that $\cpx{T}$ is a tilting complex over $\End_{\mathcal{S}}(V)$.
The complex $\cpx{T}$ is self-orthogonal since $\Hom_{\mathcal{S}}(V,-):\add V\ra \proj{\End_{\mathcal{S}}(V)}$ is fully faithful.
It is easy to see that $\add(\cpx{T})$ generates $\Kb{\proj{\End_{\mathcal{S}}(V)}}$ as a triangulated category. It suffices to show that  $\End_{\Kb{\proj{\End_{\mathcal{S}}(V)}}}(\cpx{T})\simeq B$ as rings.

Let $\overline{d_{n-1}}=(0,d_{n-1}):M_{n-1}\ra M_n\oplus M$ and $\overline{g}=\begin{pmatrix}
1 &0 \\
0 &g
\end{pmatrix}: M_n\oplus M\ra Y\oplus M$. Then it follows $\overline{d_{n-1}}\overline{g}=0$ from $d_{n-1}g=0$.

It follows that
$$
\End_{\Kb{\proj{A}}}(\cpx{T})\simeq \End_{\Kb{\al S}}(\cpx{P})
$$
as rings, since $\Hom_{\al S}(V,-):\add(V)\ra \proj{\End_{\mathcal{S}}(V)}$
is fully faithful. To show the claim, it suffices to prove that there is a ring isomorphism
$$
\Theta: \End_{\Kb{\al S}}(\cpx{P})\lra \End_{\al S}(M\oplus Y).
$$

Now let $(\alpha,\alpha_1,\cdots, \alpha_n):\cpx{P}\lra\cpx{P}$ be a chain map between $\cpx{P}$ with $\alpha\in\End(X), \alpha_i\in\End(M_i)$ for $i=1,\cdots, n-1$ and $\alpha_n\in\End(M\oplus M_n)$. By the exact sequence  $(\ddagger)$ in the definition,
$$
 0\lra \Hom_{\al C}(Y\oplus M, Y\oplus M)\xra {\overline{g}_*} \Hom_{\al C}(M_n\oplus M, Y\oplus M)\xra{\overline{d_{n-1}}_*} \Hom_{\al C}(M_{n-1},Y\oplus M)\cdots\xra {f_*} \Hom_{\al C}(X, Y\oplus M)\quad
$$
Since $\overline{d_{n-1}}_*(\alpha_n\overline{g})=\overline{d_{n-1}}\alpha_n\overline{g}=\alpha_{n-1}\overline{d_{n-1}}\overline{g}=0$, there is a unique morphism $\beta\in\End(Y\oplus M)$  such that $\overline{g}_*(\beta)=\overline{g}\beta=\alpha_n\overline{g}$.
Therefore, there exists a unique morphism $\beta\in\End_{\al C}(Y\oplus M)$ such that the following diagram is commutative:

$$\xymatrix@M=1mm{
X\ar[r]^{f}\ar[d]^{\alpha} &M_1\ar[r]^{d_1}\ar[d]^{\alpha_1} & M_2\ar[r]^{d_2}\ar[d]^{\alpha_2} &\cdots\ar[r]^{d_{n-2}}&M_{n-1}\ar[d]^{\alpha_{n-1}}\ar[r]^{\overline{d_{n-1}}} &M_n\oplus M\ar[d]^{\alpha_n}\ar[r]^{\overline{g}} &Y\oplus M\ar@{..>}[d]^{\beta}\\
X\ar[r]^{f} &M_1\ar[r]^{d_1} &M_2\ar[r]^{d_2} &\cdots\ar[r]^{d_{n-2}}& M_{n-1}\ar[r]^{\overline{d_{n-1}}} \ar[r]^{\overline{d_{n-1}}} &M_n\oplus M\ar[r]^{\overline{g}} &Y\oplus M\\
}$$
where $\overline{g}=\begin{pmatrix}
1 &0 \\
0 &g
\end{pmatrix}: M_n\oplus M\ra Y\oplus M$.
To check $\beta\in\End_{\al S}(Y\oplus M)$, we set
$
\alpha_n=\begin{pmatrix}
x_1&x_2 \\
x_3 &x_4
\end{pmatrix}\in
\begin{pmatrix}
\End_{\al C}(M) &\Hom_{\al C}(M,M_n) \\
\Hom_{\al C}(M_n,M) &\End_{\al C}(M_n)
\end{pmatrix},
$
and
$
\beta=\begin{pmatrix}
\beta_1&\beta_2 \\
\beta_3 &\beta_4
\end{pmatrix}\in \begin{pmatrix}
\End_{\al C}(M) &\Hom_{\al C}(M,Y) \\
\Hom_{\al C}(Y,M) &\End_{\al C}(Y)
\end{pmatrix}.
$
It follows from $\overline{g}\beta=\alpha_n\overline{g}$ that
$
\begin{pmatrix}
1&0 \\
0 &g
\end{pmatrix}
\begin{pmatrix}
\beta_1&\beta_2 \\
\beta_3 &\beta_4
\end{pmatrix}=\begin{pmatrix}
x_1&x_2 \\
x_3 &x_4
\end{pmatrix}
\begin{pmatrix}
1&0 \\
0 &g
\end{pmatrix}.
$
Consequently,  we get $\beta_1=x_1, \beta_2=x_2g, g\beta_3=x_3, g\beta_4=x_4g$. It follows that $\beta\in\End_{\al S}(M\oplus Y)$.

To show that $\Theta$ is well-defined, it suffices to prove that the chain map $(\alpha,\alpha_1,\cdots, \alpha_n)$ is homotopic to the zero if and only if $\beta=0$.
If $(\alpha,\alpha_1,\cdots, \alpha_n)$ is null-homotopic, then there exists $h_n: M_n\oplus M\ra M_{n-1}$ such that $\alpha_n=h_n\overline{d_{n-1}}$. In this case, we have
$$
\alpha_n\overline{g}=h_n\overline{d_{n-1}}\overline{g}=0,
$$
and therefore $\overline{g}_*(\beta)=0$,  we thus get $\beta=0$.

Now suppose that $\beta=0$. Then $\overline{g}\beta=\alpha_n\overline{g}=0$. By the exact sequence ($\dagger$) of the Definition \ref{higher}, the following sequence
$$
 0\ra \Hom_{\al C}(M\oplus M_n, X)\xra {f^*} \cdots\ra\Hom_{\al C}(M\oplus M_n, M_{n-1})
\xra {\overline{d_{n-1}}^*}\Hom_{\al C}(M\oplus M_n, M_n\oplus M)\xra{\overline{g}^*}\Hom_{\al C}(M\oplus M_n, M\oplus Y)
$$
is exact. It follows that there exists a morphism $h_n: M\oplus M_n\ra M_{n-1}\oplus M$ such that $\alpha_n=h_n\overline{d_n}$. Since $(\alpha_{n-1}-\overline{d_{n-1}}h_n)\overline{d_{n-1}}=\alpha_{n-1}\overline{d_{n-1}}-\overline{d_{n-1}}h_n\overline{d_{n-1}}=\alpha_{n-1}\overline{d_{n-1}}-\overline{d_{n-1}}\alpha_n=0$, by the exact sequence ($\dagger$) of the Definition \ref{higher} again, the following sequence
$$\begin{array}{rl}
 0\ra \Hom_{\al C}(M_{n-1}, X)\xra {f^*} \cdots\ra\Hom_{\al C}(M_{n-1}, M_{n-2})\xra{d_{n-2}^*}\Hom_{\al C}(M_{n-1}, M_{n-1})
\xra {\overline{d_{n-1}}^*}\Hom_{\al C}(M_{n-1}, M_n\oplus M)\\\xra{\overline{g}^*}\Hom_{\al C}(M_{n-1}, M\oplus Y)
 \end{array}$$
is exact, there exists a morphism $h_{n-1}: M_{n-1}\ra M_{n-2}$ such that $\alpha_{n-1}=h_{n-1}d_{n-2}+\overline{d_{n-1}}h_n$.
By the induction, we have the following diagram:
$$
\xymatrix@M=1mm{
X\ar[r]^{f}\ar[d]^{\alpha} &M_1\ar@{..>}[dl]_{h_1}\ar[r]^{d_1}\ar[d]^{\alpha_1} & M_2\ar@{..>}[dl]_{h_2}\ar[r]^{d_2}\ar[d]^{\alpha_2} &\cdots\ar[r]^{d_{n-2}}&M_{n-1}\ar@{..>}[dl]_{h_{n-1}}\ar[d]^{\alpha_{n-1}}\ar[r]^{\overline{d_{n-1}}} &M_n\oplus M\ar[d]^{\alpha_n}\ar[r]^{\overline{g}} \ar@{..>}[dl]_{h_n}&Y\oplus M\ar@{..>}[d]^{\beta=0}\\
X\ar[r]^{f} &M_1\ar[r]^{d_1} &M_2\ar[r]^{d_2} &\cdots\ar[r]^{d_{n-2}}& M_{n-1}\ar[r]^{\overline{d_{n-1}}} \ar[r]^{\overline{d_{n-1}}} &M_n\oplus M\ar[r]^{\overline{g}} &Y\oplus M,\\
}
$$
and we get $\alpha_n=h_n\overline{d_n}, \alpha_{n-1}=h_{n-1}d_{n-2}+\overline{d_{n-1}}h_n$, $\alpha_i=h_id_{i-1}+d_ih_{i+1}$ for $i=2,\cdots, n-2$, $\alpha_1=h_1f+d_1h_2$ and
$\alpha=f h_1$.
Therefore, $(\alpha,\alpha_1,\cdots, \alpha_n)$ is null-homotopic in $\Kb{\al S}$.

So, by the above argument,  the following map
$$
 \begin{alignedat}{3}
\Theta: \End_{\Kb{\al S}}(\cpx{P})\lra \End_{\al S}(M\oplus Y)\\
\overline{(\alpha,\alpha_1,\cdots, \alpha_n)} \mapsto \beta
 \end{alignedat}
$$
is well-defined.  This map is injective, it remains to show that  $\Img(\Theta)=\End_{\al S}(M\oplus Y)$.
For any
$$
\beta=\begin{pmatrix}
\beta_1&\beta_2 \\
\beta_3 &\beta_4
\end{pmatrix}\in \begin{pmatrix}
\End_{\al S}(M) &\Hom_{\al S}(M,Y) \\
\Hom_{\al S}(Y,M) &\End_{\al S}(Y)
\end{pmatrix},
$$
it follows that there exist $\beta'_2\in\Hom(M,M_n)$, $\beta'_4\in\End(M_n)$ such that $\beta_2=\beta'_2g ,g\beta_4=\beta'_4g$.
Consequently,
$$
\begin{pmatrix}
1&0 \\
0 &g
\end{pmatrix}
\begin{pmatrix}
\beta_1&\beta_2 \\
\beta_3 &\beta_4
\end{pmatrix}=\begin{pmatrix}
\beta_1&\beta'_2g \\
g\beta_3 &g\beta_4
\end{pmatrix}=\begin{pmatrix}
\beta_1&\beta'_2 \\
gx_3 &\beta'_4
\end{pmatrix}
\begin{pmatrix}
1&0 \\
0 &g
\end{pmatrix}.
$$
Set $\alpha_n=\begin{pmatrix}
\beta_1&\beta'_2 \\
gx_3 &\beta'_4
\end{pmatrix}$.
Since $\overline{d_{n-1}}\alpha_n\overline{g}=0$, by the exact sequence $(\dagger)$ of the Definition \ref{higher} and induction, there exist maps
$\alpha_{n-1}: M_{n-1}\ra M_{n-1}$ such that $\alpha_{n-1}\overline{d_{n-1}}=\overline{d_{n-1}}\alpha_n$ for $2\leq i\leq n-2$, and there exists a unique map $\alpha\in \End_{\al S}(X)$ such that we have the following diagram
$$\xymatrix@M=1mm{
X\ar[r]^{f}\ar@{..>}[d]^{\alpha} &M_1\ar[r]^{d_1}\ar@{..>}[d]^{\alpha_1} & M_2\ar[r]^{d_2}\ar@{..>}[d]^{\alpha_2} &\cdots\ar[r]^{d_{n-2}}&M_{n-1}\ar@{..>}[d]^{\alpha_{n-1}}\ar[r]^{\overline{d_{n-1}}} &M_n\oplus M\ar@{..>}[d]^{\alpha_n}\ar[r]^{\overline{g}} &Y\oplus M\ar[d]^{\beta}\\
X\ar[r]^{f} &M_1\ar[r]^{d_1} &M_2\ar[r]^{d_2} &\cdots\ar[r]^{d_{n-2}}& M_{n-1}\ar[r]^{\overline{d_{n-1}}} \ar[r]^{\overline{d_{n-1}}} &M_n\oplus M\ar[r]^{\overline{g}} &Y\oplus M
.}$$
This implies that $\overline{(\alpha,\alpha_1,\cdots, \alpha_n)}\in \End_{\Kb{\al S}}(\cpx{P})$ and $\beta=\Theta(\overline{(\alpha,\alpha_1,\cdots, \alpha_n)})$, so the map $\Theta$ is surjective. Note that
$$
\End_{\mathcal {S}}(M\oplus Y)\simeq\begin{pmatrix}
\End_{\mathcal {S}}(M)&       \Hom_{\mathcal {S}}(M,Y)\\
\Hom_{\mathcal {S}}(Y,M)     & \End_{\mathcal {S}}(Y)
\end{pmatrix}=B,
$$
 This completes the proof. $\square$

\section{Locally finite algebras and locally tilting sets of complexes}\label{locallyfinite}
In this section, we  we shall introduce locally finite $\Phi$-algebras, where $\Phi$ is an
admissible subset of $\mathbb{Z}$ and study derived equivalences between them by locally tilting families of complexes.
Note that $\Phi$ is an infinite subset of  $\mathbb{Z}$.

\subsection{Locally finite algebras with enough idempotents}

In this subsection, we introduce locally finite algebras and show that there are locally finite tilting families for such algebras.

Let $R$ be a commutative ring and let $\mathcal {I}$ be an index set. For each $i, j\in \mathcal {I}$, let $\Lambda_i$ be an $R$-algebra and
$\Lambda_{ij}$ be a $\Lambda_i$-$\Lambda_j$-bimodule, with $\Lambda_{ii}=\Lambda_i$. For each $i,j,k\in \mathcal {I}$, we suppose that there is a $\Lambda_i$-$\Lambda_k$-bimodule homomorphism
$\mu_{ijk}:\Lambda_{ij}\otimes_{\Lambda_j}\Lambda_{jk}\lra \Lambda_{ik}$. We further also assume that $(\mu_{ijk}\otimes 1_{\Lambda_{kl}}) \circ\mu_{ikl}=(1_{\Lambda_{ij}} \otimes\mu_{jkl})\circ \mu_{ijl}$.
We impose the following finiteness conditions.

$(1)$ For each $i\in\mathcal{I}, \Lambda_{ij}=0$ for all but finitely many $j\in \mathcal {I}$;

$(1)$ For each $j\in\mathcal{I}, \Lambda_{ij}=0$ for all but finitely many $i\in \mathcal {I}$.

Let $\Lambda$ be the set of all $\mathcal {I}\times \mathcal {I}$ matrixes, $(\lambda_{ij})$, such that, for $i,j\in \mathcal {I}$, $a_{ij}\in \Lambda_{ij}$, and all but finite number of $\lambda_{ij}$ are zero.
The matrix addition and matrix multiplication defined above provide $\Lambda$ with $R$-algebra structure. Note that if $\mathcal {I}$ is an infinite set, then $\Lambda$ has no identity element.
For $i\in \mathcal {I}$, let $e_i$ be the matrix in $\Lambda$ having $1\in \Lambda_i$ in $(i,i)$-entry and $0$ in all other entries and denote $\mathcal{E}=\{e_i\}_{i\in \mathcal {I}}$. The algebra $\Lambda$ has enough orthogonal idempotents, namely, the elements of $\mathcal{E}$, since $\Lambda=\bigoplus_{i\in \mathcal {I}}\Lambda e_i=\bigoplus_{i\in \mathcal {I}}e_i\Lambda$. We say that the pair
$(\Lambda,\mathcal{E})$ is a locally finite $R$-algebra with respect to $\mathcal {I}$.
Let $\Lambda^u\Modc$ denote the category of left locally unital $\Lambda$-modules $X$ such that if $x\in X$ then there is a finite set $\mathcal {J}\subset \mathcal {I}$ such that $x(\bigoplus_{i\in \mathcal {J}}e_i)=x$.
Note that $\Lambda^u\Modc$ is a Grothendieck category. A Grothendieck category is a cocomplete abelian category with a generator
where filtered colimits are exact. We say that a set $\{Y_i\}_{i\in \mathcal {I}}$ is a locally finite set of $\Lambda$-modules if for each $i\in \mathcal {I}$, $Y_i$ is in $\Lambda^u\Modc$. A set $\{\cpx{Y_i}\}_{i\in \mathcal {I}}$ is a locally finite set of $\Lambda$-complexes if for each $i\in \mathcal {I}$, $\cpx{Y_i}$ is in $\C{\Lambda^u\Modc}$. Let $\D{\Lambda^u\Modc}$ denote the derived categories of locally unital $\Lambda$-modules.

\begin{rem}
$(1)$
The data of a locally finite algebra $A$ is the same as the data of a
small category $\mathcal {A}$ with object set $\mathcal {I}$ and morphisms $\Hom(i,j)=e_i Ae_j$. In this
incarnation, locally finite algebra homomorphisms correspond to functors. A left $A$-module becomes a functor from $\mathcal{A}$ to $\Ve$,  and then a module homomorphism is a natural
transformation of functors. For example, the projective module $Ae_i$  corresponds to the functor $\Hom(i,-): \mathcal{A} \ra \Ve$. Then the Yenoda Lemma asserts that there is a fully faithful
functor from $\mathcal {A}$ to $\proj{A}$ sending $i\in\mathcal{A}$ to $Ae_i$ and a morphism $a\in\Hom_{\mathcal{A}}(i,j)$ to the homomorphism $Ae_i\ra Ae_j$ defined by left multiplication by $a \in e_iAe_j$.
This extends canonically
to an equivalence of categories
$$
\widehat{\mathcal {A}}\ra \proj{A}
$$
where $\widehat{\mathcal {A}}$ denotes the additive Karoubi envelope of $\mathcal {A}$, that is, the idempotent completion
of the additive envelope of $\mathcal {A}$. Note that $\{Ae_i, i\in I\}$ is a projective generating family for $A$. By a projective generating family for an Abelian category $\mathcal {A}$, we mean a small family
$\{P_x\}_{x\in X}$ of finitely generated projective objects such that for each $V \in\mathcal {A}$, there is some
$x \in X$ with $\Hom(P_x, V)\neq 0$. Then we get the category $\mathcal{A}\Modc$ is then equivalent to $A^u\Modc$. We refer to \cite{BD,Wis} for more information about locally finite algebras.

$(2)$ The locally finite algebras have powerful application in noncommutative algebras (for example, see \cite{Mori})

\end{rem}

We introduce a locally finite tilting set of complexes called a locally finite tilting family which is a generalization of \cite[Definiition 3.3]{GH}.

\begin{defn} Let $(\Lambda,\mathcal{E})$ be a locally finite $R$-algebra. We say that $\{\cpx{T_i}\}$ is a locally finite tilting family if the following conditions are satisfied:

$(1)$ $\cpx{T_i}\in\Kb{\proj{\Lambda}}$ for each $i\in \mathcal{I}$;

$(2)$ $\Hom(\cpx{T_i}, \cpx{T_j}[n])=0$ for $n\neq 0$ and $i,j\in\mathcal{I}$;

$(3)$ $\thick(\cpx{T_i}, i\in \mathcal{I})=\Kb{\proj{\Lambda}}$.
\end{defn}

\begin{rem}  If $\mathcal{I}$ is finite, then $\bigoplus_{i\in\mathcal{I}}\cpx{T}_i$ is a tilting complex by \ref{R}.
If $\mathcal{I}$ is infinite, then $\bigoplus_{i\in\mathcal{I}}\cpx{T}_i$ is not a tilting complex.
\end{rem}

Let $\{\cpx{T_i}\}$ be a locally finite tilting complex set. For $i,j\in \mathcal{I}$,
define $\Sigma_{ij}=\Hom_{\K{\Lambda}}(\cpx{T_i}, \cpx{T_j})$ and let $\Sigma$ be the set of  all $\mathcal{I}\times \mathcal{I}$ matrixes, $(\sigma_{ij})$, such that, for $i,j\in \mathcal{I}$, $a_{ij}\in \Sigma_{ij}$, and all but finite number of $\sigma_{ij}$ are zero. $\Sigma$ is an $R$-algebra via matrix addition and matrix multiplication, where the structure maps $\mu_{ijk}:\Sigma_{ij}\otimes_{\Sigma_j}\Sigma_{jk}\lra \Sigma_{ik}$ are given by compositions.
For $i\in \mathcal{I}$, let $d_i=(\sigma_{lm})\in\Sigma$, where $\sigma_{Lm}=1_{\cpx{T_i}}$ for $i=l=m$  and $0$ otherwise and denote $\mathcal{D}=\{d_i\}_{i\in\mathcal{I}}$.

\begin{thm}\label{ll} Let $(\Lambda,\mathcal{E})$ be a locally finite $k$-algebra with respect to $\mathcal{I}$ and let $\{\cpx{T_i}\}_{i\in \mathcal{I}}$ be a locally finite tilting family of $\Lambda$-complexes. Let $(\Hom_{\K{\Lambda}}(\cpx{T_i}, \cpx{T_j}), \mathcal{D})$ be the locally finite endomorphism ring of $\{\cpx{T_i}\}$ with respect to $\mathcal{I}$ and denote
$\Gamma=(\Hom_{\K{\Lambda}}(\cpx{T_i}, \cpx{T_j}))_{\mathcal{I}\times \mathcal{I}}$. Then there is a triangle equivalence
$$
\D{\Lambda^u\Modc}\lra\D{\Gamma^u\Modc}$$
\end{thm}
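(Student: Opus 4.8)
## Proof Strategy for Theorem~\ref{ll}

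\textbf{Overall approach.} The plan is to reduce the statement to Keller's Morita theory for differential graded / small $k$-categories, namely Theorem~\ref{k}, rather than to Rickard's Theorem~\ref{R} (which applies literally only to rings with identity, and $\Lambda$ need not be unital). The key observation is that a locally finite $k$-algebra $(\Lambda,\mathcal{E})$ with respect to $\mathcal{I}$ is, up to equivalence, the same data as a small $k$-category $\al{A}_\Lambda$ whose object set is $\mathcal{I}$ and whose morphism spaces are $\al{A}_\Lambda(j,i) = e_i\Lambda e_j = \Lambda_{ij}$, with composition given by the structure maps $\mu_{ijk}$; the finiteness conditions on $\Lambda$ are exactly what is needed for this to be well defined and for $\Lambda^u\Modc$ to be identified with the category of $k$-linear functors $\al{A}_\Lambda \to k\Modc$, i.e. $\D{\Lambda^u\Modc} \simeq \D{\al{A}_\Lambda}$. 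So the theorem becomes a statement about $\D{\al{A}_\Lambda}$ and $\D{\al{A}_\Gamma}$, where $\al{A}_\Gamma$ is the small $k$-category with objects $\mathcal{I}$ and $\al{A}_\Gamma(j,i) = \Sigma_{ij} = \Hom_{\K{\Lambda}}(\cpx{T_i},\cpx{T_j})$.

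\textbf{Key steps.} First I would set up the dictionary above carefully, checking that the associativity hypothesis $(\mu_{ijk}\otimes 1)\circ\mu_{ikl} = (1\otimes\mu_{jkl})\circ\mu_{ijl}$ is precisely associativity of composition in $\al{A}_\Lambda$, and that the local-unitality condition defining $\Lambda^u\Modc$ translates to "$k$-linear functor on $\al{A}_\Lambda$." Second, I would realize $\al{A}_\Gamma$ as a full subcategory $\al{U}$ of $\D{\al{A}_\Lambda} \simeq \D{\Lambda^u\Modc}$: namely, let $U_i$ be the complex $\cpx{T_i}$ viewed inside $\Kb{\proj\Lambda} \subseteq \D{\Lambda^u\Modc}$, with $\Hom_{\D{\al{A}_\Lambda}}(U_i, U_j) = \Hom_{\K{\Lambda}}(\cpx{T_i},\cpx{T_j}) = \Sigma_{ij}$ (using that each $\cpx{T_i}$ is a bounded complex of projectives, so Hom in $\K{}$ agrees with Hom in $\D{}$). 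Third, I would verify the three conditions of Theorem~\ref{k}(iii) for $\al{U} = \{U_i\}_{i\in\mathcal{I}}$: the vanishing $\D{\al{A}_\Lambda}(U_i, U_j[n]) = \Hom_{\K{\Lambda}}(\cpx{T_i},\cpx{T_j}[n]) = 0$ for $n\neq 0$ is exactly condition~(2) of a locally finite tilting set; that the $U_i$ are small (compact) generators follows from $\cpx{T_i}\in\Kb{\proj\Lambda}$ (compactness) together with condition~(3), $\thick(\cpx{T_i}, i\in\mathcal{I}) = \Kb{\proj\Lambda}$, which says the $U_i$ thickly generate the compact objects, hence generate $\D{\Lambda^u\Modc}$; and "forming a set" is immediate since $\mathcal{I}$ is a set. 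Then Theorem~\ref{k} yields a triangle equivalence $\D{\al{A}_\Gamma} \simeq \D{\al{A}_\Lambda}$, i.e. $\D{\Gamma^u\Modc} \simeq \D{\Lambda^u\Modc}$. Finally I would trace through Keller's equivalence to see that the representable functor at $i\in\mathcal{I}$, which corresponds to $\Gamma d_i$ on the $\Gamma$-side, is sent to $U_i = \cpx{T_i}$, giving the stated correspondence $\cpx{T_i}\mapsto \Gamma d_i$ (read in the appropriate direction).

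\textbf{Main obstacle.} The delicate point is the identification $\D{\Lambda^u\Modc} \simeq \D{\al{A}_\Lambda}$ and, in particular, checking that $\Kb{\proj\Lambda}$ corresponds to the compact objects of $\D{\al{A}_\Lambda}$ and that the finitely generated projective $\Lambda$-modules are precisely the $\Lambda e_i$ (which correspond to the representable functors / free modules of rank one in the category picture). In the non-unital, infinite-$\mathcal{I}$ setting one must be careful about which smallness/compactness notion is in force and about the fact that $\bigoplus_{i}\cpx{T_i}$ is \emph{not} a tilting complex (as the remark notes), so one genuinely needs the "set of small generators" version of Morita theory rather than a single-generator version. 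I also expect to spend some care verifying that condition~(3) of the locally finite tilting set really gives generation of the full derived category $\D{\Lambda^u\Modc}$ (not just of $\Kb{\proj\Lambda}$): this is where compactness of the $\cpx{T_i}$ is essential, via the standard argument that a thick subcategory of compacts generating all compacts generates the whole (compactly generated) derived category. Once the categorical translation is in place, the rest is a direct application of Theorem~\ref{k} together with bookkeeping on the functor $\cpx{T_i}\mapsto \Gamma d_i$.
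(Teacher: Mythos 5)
Your proposal is correct and follows essentially the same route as the paper: the paper's own proof simply passes from $(\Lambda,\mathcal{E})$ to the small $k$-category with object set $\mathcal{I}$ and morphism spaces $\Lambda_{ij}$, identifies its module category with $\Lambda^u\Modc$, and invokes Keller's Theorem~\ref{k}. Your writeup is in fact considerably more detailed than the paper's two-line sketch, and the verifications you single out (orthogonality from condition (2), compact generation from condition (3), and the bookkeeping $\cpx{T_i}\mapsto\Gamma d_i$) are exactly the points the paper leaves implicit.
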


\begin{proof} If  $(\Lambda,\mathcal{E})$ is a locally finite $k$-algebra with respect to $\mathcal{I}$, we can modify the structure of $k$-linear category structure $\mathcal{A}$ in the following way: As the set of objects we take the set $\mathcal{I}$, The space of morphisms between objects $\{i\},\{j\}$ is given by
$$
\Hom(\{i\},\{j\})=\Lambda_{ij}.$$
The category $\mathcal{A}\Modc$ is then equivalent to $\Lambda^u\Modc$. There is a tilting subcategory $\al U=\{U_i, i\in \mathcal{I}\}$ for $\mathcal{A}\Modc$, such that $U_i=\cdots\ra 0\ra (T^b_i,-)\cdots\ra (T^a_i,-)\ra 0\ra\cdots$ if $\cpx{T_i}$ of the form $0\ra T^a_i\ra \cdots\ra T^b_i\ra 0$ with integers $a\leq b$.
Let $$
\Gamma=\bigoplus_{i,j\in \mathcal{I}}\Hom_{\K{\Lambda}}(\cpx{T_i}, \cpx{T_j})
$$
viewed as a locally finite algebra with distinguished idempotents $(e_i:=1_{\cpx{T_i}})_{i\in \mathcal{I}}$.
Let $\mathcal {B}$ be the $R$-linear category with object set $\mathcal{I}$ and morphisms $\Hom(i,j)=e_i\Gamma e_j$, where the projective $\Gamma$-module $\Gamma e_i$ is correspondent to $U_i$.
Therefore, $\mathcal {B}$ is equivalent to the tilting subcategory $\al U$. The result follows from Theorem \ref{k}.
\end{proof}

\begin{rem} $(i).$ If $\mathcal{I}$ is a finite set, then this theorem is Rickard's construction in \cite{Ri1}.

$(ii).$ If $\D{\Lambda^u\Modc}\simeq \D{\Gamma^u\Modc}$ is an equivalence, then we say that the locally finite algebras $\Lambda$ and $\Gamma$ are derived equivalent.

\end{rem}

In the following, we will give an example to explicit our theorem.

\begin{ex}\cite[Example 4.3]{Hu2017} Let $k$ be a field, and let $Q$ be the infinite quiver
$$
\xymatrix{
\bullet &\ar[l]_(1){0}_{\alpha_1}_(0){1}\bullet &\bullet\ar[l]_(0){2}_{\alpha_2} &\bullet\ar[l]_(0){3}_{\alpha_3}&\ar[l]\cdots
}.
$$
A representation of $Q$ over $k$ is a collection of vector spaces $V_i$ for each vertex $i$ together with linear maps $f_{\alpha_i}: V_i\ra V_{i-1}$ for all $i$. Let $\mathcal{A}$ be the category of all finite dimensional representations $(V_i, f_{\alpha_{i+1}})_{i\geq 0}$ of $Q$
satisfying $f_{\alpha_i}f_{\alpha_{i-1}}=0$ for all $i>0$.
Let $P_0$ be the representation $k\lla 0\lla 0\lla\cdots$, and, for each $i>0$,  let $P_i$ be the representation $0\lla\cdots\lla k\llaf{1}k\lla 0\lla\cdots$, where the two $k$'s correspond to the vertices $i-1, i$. Then $\mathcal{A}$ is an abelian category with enough projective objects and  $P_i, i\geq 0$ are precisely those indecomposable projective objects in $\mathcal{A}$. Consider the following complexes over $\mathcal{A}$:
$$
\cpx{T}_i: \quad 0\lra P_0\lra \cdots\lra P_{i-1}\lra P_i\lra 0, \quad i\geq 0.$$
It is easy to check that $\{\cpx{T}_i|i\geq 0\}$ is a locally finite tilting family of $\Db{\mathcal{A}}$, that is, the following two conditions are satisfied.

a) $\Hom_{\Db{\mathcal{A}}}(\cpx{T}_i, \cpx{T}_j[l])=0$  for all $i, j\in\mathbb{N}$ and $ l\neq 0$;

b) $\thick\{\cpx{T}_i|i\geq 0\}=\Db{\mathcal{A}}$.

{\parindent=0pt The } locally finite tilting family $\{\cpx{T}_i|i\geq 0\}$  is equivalent as a category to the quiver $Q_T$:
$$\xymatrix{
\bullet\ar[r]^(0){0}^(1){1}^{\beta_1} &\bullet\ar[r]^(1){2}^{\beta_2} &\bullet\ar[r]^(1){3}^{\beta_3} &\bullet\ar[r] & \cdots
}$$
For each $i\geq 0$, let $P^*_i$ be the representation
$0\lra\cdots\lra 0\lra k\lraf{1}k\lraf{1}k\lra\cdots$, where the first $k$ corresponds to the vertex $i$.  Let $\mathcal{B}$ be the category of finitely generated representations of $Q_T$ over $k$. Then $\mathcal{B}$ is an abelian category with enough projective objects, and the indecomposable projective objects are $P_i^*, i\in\mathbb{N}$.  Note that $\gldim\mathcal{B}=1$ and $\Db{\mathcal{B}}=\Kb{\proj{B}}$.  By Theorem \ref{ll} or \cite[Theorem 3.6]{Keller2006},  there is a triangle equivalence $F: \Db{\mathcal{B}}\lra\Db{\mathcal{A}}$ sending $P_i^*$ to $\cpx{T}_i$ for all $i\in\mathbb{N}$.
\end{ex}

Let $A$ be a finite dimensional $k$-algebra. Denote by $D=\Hom_k(-,k)$ the standard duality on $A\modc$.
The repetitive algebra $\widehat{A}$ proposed by Hughes and Waschb\"{u}sch \cite{Huwa}, is a Frobenius algebra and always infinite-dimensional except in the trivial case $A=0$. We consider $\widehat{A}$ as the infinite matrix algebra, without identity

$$
\widehat{A}=
\begin{pmatrix}
\ddots &&& 0\\
\ddots &A_{i-1}\\
&  D(A)_{i-1} & A_i\\
& & D(A)_i & A_{i+1} \\
& & & D(A)_{i+1}&\ddots\\
0& & & &\ddots\\
\end{pmatrix},
$$
in which matrices have only finitely many non-zero entries, $A_i=A$ is placed on the main diagonal, $D(A)_i=D(A)$ for all $i\in\mathbb{Z}$, all the remaining entries are zero, and the multiplication is induced from the canonical maps $A\otimes_AD(A)\ra D(A)$, $D(A)\otimes_AA\ra D(A)$ and the zero map $D(A)\otimes_AD(A)\ra 0$.

As is known, repetitive algebras of finite dimensional algebras are locally bounded algebras.
Recall that an algebra $A$ is called locally bounded if there exists a complete set of pairwise orthogonal idempotents $\{e_x|x\in I\}$ such that $Ae_x$ and $e_xA$ are finite dimensional over a field $k$ for all $x\in I$. For a locally bounded algebra $A$, any finite generated $A$-module has finite length. In particular, $A\modc$ is an abelian category.  The following example modified the theorem of Chen \cite{CHen}.

\begin{ex} Let $A$ and $B$ be finite dimensional algebras.
Suppose that $A$ and $B$ are derived equivalent and that $\cpx{T}$ is a tilting complex over $A$ such that $\End(\cpx{T})\simeq B$.
Let $\widehat{A}$ and $\widehat{B}$ be repetitive algebras of $A$ and $B$, respectively.
Then $\widehat{A}e_i\otimes_A\cpx{T}$ is a locally finite tilting family of $\widehat{A}$, where $e_i$ is the matrix with $1\in A$ in $(i,i)$-entry, and $0$ in other entries such that $e_i\widehat{A}e_i\simeq A$ and $i\in\mathbb{Z}$. Therefore, we have the following

(1) $\widehat{A}e_i\otimes_A\cpx{T}$ is self-orthogonal.
$$\begin{aligned}
\Hom_{\Db{\widehat{A}}}(\widehat{A}e_i\otimes_A\cpx{T},\widehat{A}e_j\otimes_A\cpx{T}[n])\simeq
\Hom_{\Db{A}}(\cpx{T},(\widehat{A}e_i,\widehat{A}e_j\otimes_A\cpx{T}[n]))\\\simeq \Hom_{\Db{A}}(\cpx{T},e_i\widehat{A}e_j\otimes_A\cpx{T}[n])
\simeq \begin{cases}
B, & if~i=j,n=0.
\\D(B), & if~i+1=j,n=0.
\\0, & others.
\end{cases}
\end{aligned}
$$

(2)We have the following algebra isomorphism by the matrix multiplication,
$$
(\Hom_{\Db{\widehat{A}}}(\widehat{A}e_i\otimes_A\cpx{T},\widehat{A}e_j\otimes_A\cpx{T}))_{i,j\in\mathbb{Z}}\simeq\widehat{B}
$$

(3) Since $A\in\thick(\cpx{T})$, we have $\widehat{A}e_i\in\thick(\widehat{A}e_i\otimes_A\cpx{T})$. It follows that $\widehat{A}\in\thick(\widehat{A}e_i\otimes_A\cpx{T})$.

Then by Theorem \ref{k}, $\widehat{A}$ and $\widehat{B}$ are derived equivalent.
\end{ex}

\subsection{Locally finite $\Phi$-Beilinson-Green algebras}

In this subsection, we shall introduce the locally
$\Phi$-Green algebras, where $\Phi$ is an
admissible set of $\mathbb{Z}$.

Let  $\mathbb Z$ be the
set of all integers. Recall that a subset of $\mathbb Z$ containing $0$ is called an
admissible subset of $\mathbb Z$ is defined in \cite{Hu2013}.
Let $\Phi$ be a subset of $\mathbb Z$.
For an additive $R$-category $\al T$ and an endo-functor $F$ from $\al T$ to $\al T$, we recall the
definition of $\Phi$-Auslander-Yoneda $R$-algebras from \cite{Hu2013} in the following.
Let $\E^{i,F,\Phi}_{\al T}$ be the bi-functor
\begin{center}
$\Hom_{\al T}(-,F^i-): {\al T} \times {\al T}\lra \mathbb Z \Modc$
\[
(X,Y)\mapsto \E^{i,F,\Phi}_{\al T}(X,Y):=
\left\{
\begin{array}{cc}
\Hom_{\al T}(X,F^iY),& if\ \ i\in\Phi.\\
 0, & if\ \ i\notin \Phi.
\end{array} \right.
\]

$X\xra f X'\mapsto \Hom_{\al T}(f,F^i Y)$,   $Y\xra g Y'\mapsto \Hom_{\al T}(X,F^i g),$
\end{center}
and let
\begin{center}
$\E^{\Phi,i,F}_{\al T}(X,Y):=\bigoplus_{j\in \mathbb Z}\E^{j-i,F,\Phi}_{\al T}(X,Y)$.
\end{center}
Suppose that $X,Y$ and $Z$ are objects in $\al T$. Let
$f_i\in \E^{i, F, \Phi}_{\al T}(X,Y)$ and $g_j\in \E^{j, F, \Phi}_{\al
T}(Y,Z)$. The composition of $f_i$ and $g_j$ is defined as follows:

$$\E^{i, F, \Phi}_{\al T}(X,Y)\times \E^{j, F, \Phi}_{\al T}(Y,Z)\lra
\E^{i+j, F, \Phi}_{\al T}(X,Z)$$

\begin{equation}(f_i,g_j)\mapsto f_i\circ g_j=
\begin{cases}
f_i(F^i g_j), & if~i\in\Phi,~j\in\Phi~and~i+j\in\Phi.
\\0, & others.
\end{cases}
\end{equation}


If $X=Y$, we write $\E^{F,\Phi}_{\al T}(X)$ for $\E^{F,\Phi}_{\al T}(X,X)$. Set $\E^{F,\Phi}_{\al T}(X)=\bigoplus_{i\in\mathbb Z}
\E^{i,F,\Phi}_{\al T}(X)$. In case $\al T$ is a triangulated category and $F=[1]$, we denote $\E^{F,\Phi}_{\al T}(X,Y)$ and  $\E^{F,\Phi}_{\al
T}(X)$ by $\E^{\Phi}_{\al T}(X,Y)$ and $\E^\Phi_{\al T}(X)$, respectively.
 In \cite{Hu2013}, Hu and Xi proved that $\Phi$ is an
admissible subset in $\mathbb Z$ if
and only if $\E^{\Phi}_{\al T}(X)$ is an associated algebra.
It is called the $\Phi$-Auslander--Yoneda algebra of $X$ \cite{Hu2013}.
Recall that in \cite{PZ}, the $\Phi$-Beilinson-Green algebra is defined provided that $\Phi$ be a finite admissible subset of $\mathbb{Z}$. In this paper, we will consider that $\Phi$ is an infinite set of $\mathbb{Z}$.
Let $R$ be a commutative ring and $\Phi$ be an admissible subset of $\mathbb{Z}$.
Then we define the locally $\Phi$-Beilinson-Green algebras $\scr G^{\Phi,F}
(X)$ for an object $X$ in $\al T$, and mention some basic properties of these algebras. Let $s<0<m$.
Firstly, let us define an $R$-module $\scr G^{\Phi,F}
(X)$ as follows:
$$
\scr G^{\Phi,F}(X)=$$
$$
\begin{pmatrix}
\vdots & \vdots & \vdots & \vdots & \vdots & \vdots& \vdots& \vdots\\
\cdots  & \E^{F}(X)_{s+1,s+1}  &   \E^{F}(X)_{s+1,s+2}       & \cdots &  \cdots & \cdots & \E^{F}(X)_{s+1,m-1}& \E^{F}(X)_{s+1,m}&  \cdots\\
\cdots  & \E^{F}(X)_{s+1,s+2}  &   \E^{F}(X)_{s+2,s+2}       & \cdots &  \cdots & \cdots & \E^{F}(X)_{s+2,m-1}& \E^{F}(X)_{s+2,m}&  \cdots\\
\vdots & \vdots & \vdots & \vdots & \vdots & \vdots& \vdots& \vdots\\
\cdots   & \E^{F}(X)_{0,s+1}& \cdots & \E^{F}(X)_{0,0}   & \E^{F}(X)_{0,1}  & \cdots &\E^{F}(X)_{0,m-1} & \E^{F}(X)_{0,m}&  \cdots\\
\cdots  & \E^{F}(X)_{1,s+1} &\cdots & \E^{F}(X)_{1,0}   &  \E^{F}(X)_{1,1}&\cdots & \E^{F}(X)_{1,m-1} & \E^{F}(X)_{1,m}&  \cdots\\
\vdots & \vdots & \vdots & \vdots & \vdots & \vdots& \vdots& \vdots\\
\cdots  & \E^{F}(X)_{m-1,s+1} &\cdots & \E^{F}(X)_{m-1,0}& \E^{F}(X)_{m-1,1} &
\cdots & \E^{F}(X)_{m-1,m-1} &\E^{F}(X)_{m-1,m}&  \cdots\\
\cdots  & \E^{F}(X)_{m,s+1} &
\cdots & \E^{F}(X)_{m,0} &\E^{F}(X)_{m,1}&\cdots &\E^{F}(X)_{m,m-1} &\E^{F}(X)_{m,m}&  \cdots\\
\vdots & \vdots & \vdots & \vdots & \vdots & \vdots& \vdots& \vdots\\
\end{pmatrix},
$$
where $\E^{F}(X)_{i,j}=\E^{i-j,F,\Phi}_{\al T}(X)$ is defined as above, $i,j\in \Phi$.
That is, $\scr G^{\Phi,F}(X)=(\E^{i-j,F,\Phi}_{\al T}(X))_{i,j\in \Phi}$. Note that if
$i-j\notin\Phi$, then $\E^{i-j,F,\Phi}_{\al T}(X)=0$.
Secondly, for any $x=(x_{ij})_{i,j\in \Phi},y=(y_{ij})_{i,j\in \Phi} \in\scr G^{\Phi,F} (X)$,
the multiplication of $x$ and $y$ is defined as follows: $xy=z=(z_{lt})_{l,t\in \Phi}$, where
$$
z_{lt}=\sum_{k=t}^l x_{lk}F^{l-k}y_{kt}
=\sum_{\substack{k-t\in\Phi\\ l-k\in\Phi}}x_{lk}F^{l-k}y_{kt}.
$$

\begin{rem} $(i).$ Recall that the triangular matrix algebra of a graded algebra of above form seems first to appear in the paper \cite{G} by Edward L. Green in 1975.
A special case of this kind of algebras appeared in \cite{Bei} by A. A. Beilinson in 1978. Perhaps it is more appropriate to name this triangular matrix algebra as the
$\Phi$-Beilinson-Green algebra of $X$.

$(ii).$ If $\Phi\neq 0$, then the locally $\Phi$-Beilinson-Green algebra has no unit, but has a locally unit,
let $\scr G^{\Phi,F}(X)^u\Modc$ denote the abelian category of all left locally unital $\scr G^{\Phi,F}(X)$-modules, recall that, a $\scr G^{\Phi,F}(X)$-module $X$ is locally unital if $x\in X$ then there is a finite set $\mathcal{J}\subset \Phi$ such that $x(\bigoplus_{i\in\mathcal{J}}e_i)=x$.  Denote by $\D{\scr G^{\Phi,F}(X)^u\Modc}$ the derived category of locally unital $\scr G^{\Phi,F}(X)$-modules.

\end{rem}

The following fact of the locally finite $\Phi$-Beilinson-Green algebra $\scr G^{\Phi,F}(X)$ is useful, which can be easily checked.
Let $e_i$ be the following matrix with $1_{\End_{\al T}(X)}$ in $(i,i)$-entry, and $0$ in other entries, that is,
$$e_i=
\begin{pmatrix}
 \vdots & \vdots & \vdots &   \vdots& \vdots& \vdots&\vdots&\vdots\\
 \cdots &0   & 0   &  0  & \cdots & 0     & 0 &\cdots \\
 \vdots & \vdots & \vdots &   \vdots& \vdots& \vdots&\vdots&\vdots\\
 \cdots &0 & 0 & 0& \cdots & 0 & 0 &\cdots &\\
 \cdots &0 & 0 & 0& \cdots & 1_{\End_{\al T}(X)} & 0 &\cdots &\\
 \cdots &0 & 0 & 0& \cdots & 0 & 0 &\cdots &\\
\vdots & \vdots & \vdots & \vdots & \vdots & \vdots& \vdots& \vdots\\
 \cdots &0   & 0   &  0  & \cdots & 0     & 0 &\cdots \\
 \vdots & \vdots & \vdots &   \vdots& \vdots & \vdots& \vdots&\vdots\\\
 \end{pmatrix}.$$ Therefore,  as a left $\scr G^{\Phi,F}(X)$-module, $\scr G^{\Phi,F}(X)e_i\cong \E^{\Phi,i,F}(X),$ where $\E^{\Phi,i,F}(X)=\oplus_{j\in \Phi} \E^{F}(X)_{j,i}$.
Then $\scr G^{\Phi,F}(X)\cong \oplus_{i\in \Phi} \E^{\Phi,i,F}
(X)$ as left $\scr G^{\Phi,F}(X)$-modules.

The following lemma is essentially taken from \cite[Lemma\;3.5]{Hu2013} by its variation, the proof given there carries over to the present situation.

\begin{lem}\label{llx}
Let $\Phi$ be an admissible subset of $\mathbb{Z}$ and let $X$ be an object in $\al T$. Assume that
$X_1$, $X_2$, $X_3\in \add X$. Then we have the following:

\begin{itemize}

\item[{\rm (1)}] The $\scr G^{\Phi,F}(X)$-module $\E^{\Phi,i,F}(X,X_k)$ is finitely generated projective,
for any $0\leq i\leq m$ and $k=1,2,3$.

\item [{\rm (2)}]There is a natural isomorphism $$\mu: \E^{i-j,F
}(X_1,X_2)\lra \Hom_{\scr G^{\Phi,F}(X)}
(\E^{\Phi,i,F}(X,X_1),\E^{\Phi,j,F}(X,X_2)),$$
which sends $x\in \E^{i-j,F}(X_1,X_2)$ to the morphism $(x)\mu:
\E^{\Phi,i,F}(X,X_1)\lra \E^{\Phi,j,F}(X,X_2)$, which maps
$(f_k)$ to $(f_kF^{k-i}(x))$.

\item [{\rm (3)}] If $x\in \E^{i-j,F}
(X_1,X_2)$ and $y\in \E^{j-k,F}(X_2,X_3)$, then $(xF^{i-j}(y))\mu=
(x)\mu (F^{i-j}(y))\mu$.
\end{itemize}
\end{lem}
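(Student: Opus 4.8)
The plan is to treat (1)--(3) as the C-construction analogue of \cite[Lemma 3.5]{Hu2013} and to import that proof almost verbatim, the only genuine work being the bookkeeping forced by the admissible set $\Phi$ and by the twists $F^{l-k}$ that enter the multiplication $z_{lt}=\sum_k x_{lk}F^{l-k}y_{kt}$ of $\scr G^{\Phi,F}(X)$. The only inputs I need are the ones recorded just before the statement: the left $\scr G^{\Phi,F}(X)$-module identification $\scr G^{\Phi,F}(X)e_i\cong\E^{\Phi,i,F}(X)$, and the fact that $Z\mapsto\E^{\Phi,i,F}(X,Z)$ is an additive functor from $\add X$ to left $\scr G^{\Phi,F}(X)$-modules sending $X^{n}$ to $(\scr G^{\Phi,F}(X)e_i)^{n}$. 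Nothing below uses invertibility of $F$, which is why the argument survives the weakening from an auto-equivalence to a mere endo-functor.

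For (1), I would choose $X_k'$ with $X_k\oplus X_k'\cong X^{n_k}$ and apply the additive functor $\E^{\Phi,i,F}(X,-)$, which yields $\E^{\Phi,i,F}(X,X_k)\oplus\E^{\Phi,i,F}(X,X_k')\cong(\scr G^{\Phi,F}(X)e_i)^{n_k}$; so $\E^{\Phi,i,F}(X,X_k)$ is a direct summand of a finitely generated free $\scr G^{\Phi,F}(X)$-module, hence finitely generated projective.

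For (2), I would first verify that the rule $(f_k)\mapsto(f_kF^{k-i}(x))$ is legitimate: if $f_k\in\E^{k-i,F}(X,X_1)$ and $x\in\E^{i-j,F}(X_1,X_2)$ then functoriality of $F$ gives $f_kF^{k-i}(x)\in\E^{k-j,F}(X,X_2)$, and the admissibility hypothesis on $\Phi$ is exactly what makes the pattern of vanishing components agree on source and target. A short computation, using $F^{l-k}\big(f_kF^{k-i}(x)\big)=F^{l-k}(f_k)F^{l-i}(x)$ and the product formula for $\scr G^{\Phi,F}(X)$, then shows that $(x)\mu$ is a homomorphism of left $\scr G^{\Phi,F}(X)$-modules and that $x\mapsto(x)\mu$ is additive. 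To see that $\mu$ is bijective I would reduce to $X_1=X$: both $\E^{i-j,F}(-,X_2)$ and $\Hom_{\scr G^{\Phi,F}(X)}\!\big(\E^{\Phi,i,F}(X,-),\E^{\Phi,j,F}(X,X_2)\big)$ are additive contravariant functors on $\add X$, and $\mu$ is a natural transformation between them (the naturality in $X_1$ is checked directly, in the same spirit as (3)), so since every object of $\add X$ is a summand of some $X^{n}$ it suffices to treat $X_1=X$; there $\E^{\Phi,i,F}(X,X_1)=\scr G^{\Phi,F}(X)e_i$, and $\mu$ becomes precisely the Yoneda isomorphism $\Hom_{\scr G^{\Phi,F}(X)}(\scr G^{\Phi,F}(X)e_i,N)\xra{\sim}e_iN$ evaluated at $e_i$, with $N=\E^{\Phi,j,F}(X,X_2)$ and $e_iN=\E^{F}(X,X_2)_{i,j}=\E^{i-j,F}(X,X_2)$.

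Finally, for (3) I would just unravel the definition of $\mu$ once more: both $(xF^{i-j}(y))\mu$ and $(x)\mu\,(F^{i-j}(y))\mu$ send $(f_l)$ to $\big(f_lF^{l-i}(x)F^{l-j}(y)\big)$ by functoriality of $F$, so they coincide; this computation also supplies the naturality of $\mu$ used in (2). The one place I expect to have to be careful is the $\Phi$-bookkeeping throughout---making sure admissibility renders every composition that appears well defined and keeps the vanishing of matrix entries consistent on both sides of each identity---but this is exactly the verification already carried out in \cite[Lemma 3.5]{Hu2013}, so I would not reproduce those routine details.
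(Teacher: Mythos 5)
Your argument is correct and is essentially the paper's own proof: the paper simply cites \cite[Lemma 3.5]{Hu2013} and asserts that the proof carries over, and what you write is precisely that proof adapted to the C-construction --- projectivity in (1) from additivity of $\E^{\Phi,i,F}(X,-)$ together with $\E^{\Phi,i,F}(X,X)\cong \scr G^{\Phi,F}(X)e_i$, the isomorphism in (2) by reduction to $X_1=X$ and the Yoneda isomorphism $\Hom_{\scr G^{\Phi,F}(X)}(\scr G^{\Phi,F}(X)e_i,N)\cong e_iN$, and (3) by unwinding $\mu$ and functoriality of $F$. The only caveat (inherited from the paper's definitions rather than introduced by you) is that expressions such as $F^{k-i}$ with $k-i<0$ tacitly require the relevant powers of the endofunctor to be defined, so the claim that invertibility of $F$ is never used should be read with that qualification.
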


\section{Derived equivalences between the quotient algebras of locally $\Phi$-Beilinson-Green algebras}\label{Derivedequiva}

Let $\Phi$ be an admissible subset of $\mathbb{Z}$. With the notations in hands,
we can give the following theorem which is the one of the main results of this paper.

\begin{thm}\,\label{Theorem-ghost}
Let $(\al C,\mathbb{E},\mathfrak{s})$ be an $n$-exangulated $k$-category ($n\geq 1$) with an $n$-exangle endo-functor $F$,
and let $M$ be an object in $\mathcal {C}$. Suppose that $\mathcal {C}(M, F^iX)=0=\mathcal {C}(Y,F^iM))$ for all $0\neq i\in\Phi$. Let
$$
X\xra f M_1\ra M_2\ra\cdots\ra M_n\xra g Y\draf{\delta}
$$
be an $n$-$\mathbb{E}$-exangle in $\mathcal {C}$ with $M_j\in\add(M)$ for all $j=1,\cdots,n$, such that $f$ is a left $\add_{\Oc{\al C}{F}{\Phi}}(M)$-approximation
of $X$ and that $g$ is a right $\add_{\Oc{\al C}{F}{\Phi}}(M)$-approximation of $Y$ in $\Phi$-orbit category $\Oc{\al C}{F}{\Phi}$.
Then the quotient rings of locally $\Phi$-Beilinson-Green algebras
$$
\frac{\scr G^{\Phi,F}(X\oplus M)}{I}
\quad\text{and} \quad\frac{\scr G^{\Phi,F}(M\oplus Y)}{J}
$$ are derived equivalent, where $I=\diag(\cdots,\Fcogh_M (X\oplus M),\cdots)$ and $J=\diag(\cdots,\Fgh_M (Y\oplus M),\cdots)$ are ideals of $\scr G^{\Phi,F}(X\oplus M)$ and $\scr G^{\Phi,F}(M\oplus Y)$, respectively.
\end{thm}

\begin{rem}
In particular, if $\Phi=0$, then the quotient algebras
$\frac{\End(X\oplus M)}{\Fcogh_M (X\oplus M)}$ and $\frac{\End(M\oplus Y)}{\Fgh_M (Y\oplus M)}$
are derived equivalent. As is known, an $n$-angulated category is an $n$-exangulated category,
then this result generalizes Chen and Hu's result for $n$-angulated categories.
\end{rem}

\subsection{The locally $\Phi$-Beilinson-Green algebras of orbit categories and the quotient algebras of locally $\Phi$-Beilinson-Green algebras}

\medskip

The following lemma is very useful to understand the ideals of factorizable ghosts and of factorizable coghosts.

\begin{lem}\label{fg}
Let $(\mathcal{C},\mathbb{E},\mathfrak{s})$ be an $n$-exangulated $k$-category ($n\geq 1$) with an $n$-exangle endo-functor $F$,
and let $M$ be an object in $\mathcal {C}$. Suppose that $\Phi$ is an admissible subset of $\mathbb{Z}$.  Let
$$
 X\xra f M_1\ra M_2\ra\cdots\ra M_{n}\xra g Y\draf{\delta}
$$
 be an $n$-$\mathbb{E}$-exangle in $\mathcal {C}$ with $M_j\in\add(M)$ for all $j=1,\cdots,n$. Suppose that $\mathcal{D}=\add_{{\mathcal{C}}^{F,\Phi}}(M)$. Then we have

(1). If $g$ is a right $\mathcal{D}$-approximation in $\Oc{\mathcal{C}}{F}{\Phi}$, and $\mathcal{C}(Y, F^iM)=0$ for all $0\neq i \in \Phi$,  then $\Fgh_{\mathcal{D}}(Y\oplus M)=\Fgh_M(Y\oplus M)$;

(2).  If $f$ is a left $\mathcal{D}$-approximation in $\Oc{\mathcal{C}}{F}{\Phi}$, and $\mathcal{C}(M, F^iX)=0$ for all $0\neq i\in\Phi$, then $\Fcogh_{\mathcal{D}}(X\oplus M)=\Fcogh_M(X\oplus M)$.
\end{lem}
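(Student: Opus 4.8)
Since assertions $(1)$ and $(2)$ are dual, I would prove $(1)$ in detail and obtain $(2)$ by passing to the opposite category $\mathcal T\op$ (which is again weakly $n$-angulated, with $F$ inducing an $n$-angle endofunctor, and with the roles of right/left approximations, of ghosts/coghosts, and of $X$ and $Y$ all interchanged). Write $\mathcal D=\add_{\Oc{\al T}{F}{\Phi}}(M)$. Because $\add_{\al T}(M)\subseteq\mathcal D$, and because $M$ additively generates $\mathcal D$ so that the $\mathcal D$-ghost condition is simply ``annihilated by $\Oc{\al T}{F}{\Phi}(M,-)$'', the inclusion $\Fgh_M(Y\oplus M)\subseteq\Fgh_{\mathcal D}(Y\oplus M)$ is immediate from the definitions; the content is the reverse inclusion, namely that a factorizable $\mathcal D$-ghost between summands of $Y\oplus M$ is already a factorizable $M$-ghost. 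By additivity of ideals it suffices to treat the four Hom-spaces between the summands $Y$ and $M$.

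The cases with source $M$ are immediate: since $M\in\mathcal D$, Lemma \ref{2.2}$(3)$ gives $\gh_{\mathcal D}(M,-)=0$, hence $\Fgh_{\mathcal D}(M,M)=\Fgh_{\mathcal D}(M,Y)=0$. For a morphism $\alpha\colon Y\to M$: the hypothesis $\mathcal T(Y,F^iM)=0$ for $0\neq i\in\Phi$ forces $\Oc{\al T}{F}{\Phi}(Y,M)=\mathcal T(Y,M)$, so $\alpha$ lives in degree zero; since $g\colon M_{n-2}\to Y$ is a right $\mathcal D$-approximation of $Y$ and has degree zero, its degree-zero part makes it a right $\add_{\al T}(M)$-approximation of $Y$ in $\al T$ as well, and Lemma \ref{2.2}$(1)$ then identifies both the $\mathcal D$-ghost condition and the $M$-ghost condition on $\alpha$ with the single equation $g\alpha=0$; combined with Lemma \ref{2.2}$(4)$ (factorizability through $\mathcal D$ is automatic, as $M\in\mathcal D$) this yields $\Fgh_{\mathcal D}(Y,M)=\Fgh_M(Y,M)$.

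The substantial case is $\Oc{\al T}{F}{\Phi}(Y,Y)$. Let $\alpha\colon Y\to Y$ be a factorizable $\mathcal D$-ghost, say $\alpha=Y\xra{u}D\xra{v}Y$ with $D\in\mathcal D$. Again $\mathcal T(Y,F^iM)=0$ for $i\neq0$ yields $\Oc{\al T}{F}{\Phi}(Y,D')=\mathcal T(Y,D')$ for every $D'\in\mathcal D$, so $u$ lies in degree zero; and since $g$ is a right $\mathcal D$-approximation of $Y$, the morphism $v\colon D\to Y$ factors as $v=v'g$ with $v'\colon D\to M_{n-2}$ in $\Oc{\al T}{F}{\Phi}$. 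Comparing degree-zero components (using that $u$, $g$, and $\alpha$ all have degree zero) gives $\alpha=u\,v'_0\,g$ with $v'_0\in\mathcal T(D,M_{n-2})$ --- a factorization of $\alpha$ through $M_{n-2}\in\add_{\al T}(M)$ inside $\al T$. Since $\alpha$ is still an $M$-ghost, this places $\alpha$ in $\Fgh_M(Y\oplus M)$ and completes $(1)$; statement $(2)$ follows by the dual argument, using that $f$ is a left $\mathcal D$-approximation of $X$ and $\mathcal T(M,F^iX)=0$ for $0\neq i\in\Phi$. The only real obstacle I foresee is the orbit-category bookkeeping: making precise where each Hom-vanishing hypothesis enters, and checking that the objects of $\mathcal D=\add_{\Oc{\al T}{F}{\Phi}}(M)$ and the approximation maps involved can always be realized through finite direct sums of $M$ in $\al T$, so that the factorizations genuinely descend from $\mathcal D$ to $\add_{\al T}(M)$ and the two ideals coincide rather than merely nest.
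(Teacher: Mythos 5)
Your proposal is correct and takes essentially the same route as the paper: dispose of the components with source $M$ via Lemma \ref{2.2}(3), use the right $\al D$-approximation property of $g$ (the paper uses $\tilde{g}=\medmatrix{g&0\\0&1}$) to replace factorization through an arbitrary $D\in\al D$ by factorization through $M_{n-2}$ (resp. $M_{n-2}\oplus M$), and then invoke ${\al T}(Y,F^iM)=0$ for $0\neq i\in\Phi$ to force everything into degree zero, where the $\al D$-ghost and $M$-ghost conditions coincide. The one shaky point is your intermediate claim that $\Oc{\al T}{F}{\Phi}(Y,D')={\al T}(Y,D')$ for \emph{every} $D'\in\al D$ (for summands of $M^n$ existing only in the orbit category this is not obvious), but it is harmless: the composite $uv'$ lands in $M_{n-2}\in\add_{\al T}(M)$ and is therefore concentrated in degree zero whatever the degrees of $u$ and $v'$, which is exactly how the paper kills the nonzero-degree components of $\alpha$.
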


\begin{proof} We revise the proof of \cite[Lemma 4.5]{CH} to our case.

(1).  Set $\tilde{g}=\left[\begin{smallmatrix}g&0\\0&1\end{smallmatrix}\right]: M_{n-2}\oplus M\lra Y\oplus M$.  By Lemma \ref{2.2}(3), we have $\Fgh_{\al D}(M, Y\oplus M)=0$.  Now we consider $\Fgh_{\al D}(Y, Y\oplus M)$. Since ${g}$ is a right ${\al D}$-approximation, by Lemma \ref{2.2} (1),  $\Fgh_{\al D}(Y, Y\oplus M)$ consists of morphisms $x:=(x_i)\in{\al C}^{F, \Phi}(Y, Y\oplus M)$ such that the composite $gx$ in $\Oc{\al C}{F}{\Phi}$ vanishes, or equivalently $g*x_i=gx_i=0$ in ${\al C}$ for all $i\in \Phi$. Since $\tilde{g}$ is also a right ${\al D}$-approximation, a morphism $x:=(x_i)\in{\al C}^{F, \Phi}(Y, Y\oplus M)$  factorizes through an object in $\mathcal{D}$ if and only if it factorizes through $\tilde{g}$. Thus $\Fgh_{\al D}(Y, Y\oplus M)$ consists of precisely those morphisms $(x_i)\in{\al C}^{F, \Phi}(Y, Y\oplus M)$ satisfying the conditions:

\smallskip
 (a). $gx_i=0$ for all $i\in\Phi$;

 (b). There is some $(y_i)\in \Oc{\al C}{F}{\Phi}(Y, M_{n-2}\oplus M)$ such that $y_i*\tilde{g}=x_i$ for all $i\in\Phi$.

\smallskip
{\parindent=0pt Note that, by} our assumption that $\mathcal{C}(Y, F^iM)=0$ for all $0\neq i\in\Phi$, the morphism $y_i$ in condition (b) above is zero for all $0\neq i\in\Phi$, and correspondingly $x_i=0$ for all $0\neq i\in\Phi$. Then $\Fgh_{\al D}(Y, Y\oplus M)$ actually consists of morphisms $(x_i)\in{\al C}^{F, \Phi}(Y, Y\oplus M)$ with $x_i=0$ for all $0\neq i\in\Phi$ such that $gx_0=0$ and $x_0=y_0\tilde{g}$ for some $y_0: Y\ra M_n\oplus M$ in $\mathcal{C}$. This is equivalent to saying that $x_0$ factorizes through $w$ and $\add(M)$ in ${\al C}$. Hence $\Fgh_{\al D}(Y\oplus M, Y\oplus M)=\Fgh_M(Y\oplus M)$ and the statement (1) is proved.

$(2)$. The proof of (2) is dual.
\end{proof}

Set $U=M\oplus X$ and $V=M\oplus Y$. Now consider the locally $\Phi$-Beilinson-Green algebras of
$U=M\oplus X$ and $V=M\oplus Y$ in $\scr G^{F,\Phi}/\Fgh_{\al D''}$, where $\al D''=\add_{\scr G^{F,\Phi}}(M)$.
Therefore, by Lemma \ref{fg}, the locally $\Phi$-Beilinson-Green algebras of $U$ and $V$ in $\scr G^{F,\Phi}/\Fgh_{\al D''}$ are the following
$$
\End_{\scr G^{F,\Phi}/\Fgh_{\al D''}}(U)=
\begin{pmatrix}
 \vdots & \vdots & \vdots & \vdots & \vdots& \vdots& \vdots\\
 \cdots&\frac{\End_{\al C}(U)}{\Fcogh_M (U)}     & \E^{F}(U)_{s,s+1}                  &  \cdots& \E^{F}(U)_{s,m-1}& \E^{F}(U)_{s,m}&\cdots\\
\vdots &  \vdots & \vdots & \vdots & \vdots&\vdots&\vdots\\
\cdots&\E^{F}(U)_{0,s} & \cdots & \frac{\End_{\al C}(U)}{\Fcogh_M (U)}  &  \cdots & \E^{F}(U)_{0,m}&\cdots\\
\vdots &  \vdots & \vdots & \vdots & \vdots&  \vdots&\vdots \\
\cdots&\E^{F}(U)_{m,s}  & \cdots &\cdots &\E^{F}(U)_{m,m-1} &\frac{\End_{\al C}(U)}{\Fcogh_M (U)}&\cdots\\
\vdots &  \vdots & \vdots & \vdots & \vdots&  \vdots&\vdots \\
\end{pmatrix},
$$
where $\E^{F}(U)_{i,j}=\E^{i-j,F,\Phi}_{\al C}(U)$, and

$$
\End_{\scr G^{F,\Phi}/\Fgh_{\al D''}}(V)=\begin{pmatrix}
\vdots &  \vdots & \vdots & \vdots & \vdots & \vdots\\
\cdots  &\frac{\E_{\al C}(V)}{\Fgh_M (V)}     & \E^{F}(V)_{s,s+1}     &  \cdots& \E^{F}(V)_{s,m-1}&\cdots  \\
\vdots &  \vdots & \vdots & \vdots & \vdots & \vdots\\
\cdots  &\E^{F}(V)_{0,s} & \cdots & \frac{\E_{\al C}(V)}{\Fgh_M (V)}  &  \cdots &\cdots\\
\vdots &\vdots &  \vdots & \vdots & \vdots & \vdots&   \\
\cdots  &\E^{F}(V)_{m,s}  & \cdots &\cdots &\frac{\E_{\al C}(V)}{\Fgh_M (V)} &\cdots\\
\vdots &  \vdots & \vdots & \vdots & \vdots & \vdots
\end{pmatrix},
$$
where $\E^{F}(V)_{i,j}=\E^{i-j,F,\Phi}_{\al C}(V)$.

In the following, we will show that locally $\Phi$-Beilinson-Green algebras in orbit categories are the quotient algebras by some ideals.

Let
$$
I=
\begin{pmatrix}
\vdots &\vdots & \vdots & \vdots & \vdots & \vdots & \vdots& \vdots& \vdots\\
\cdots &\Fcogh_M (U) & 0    &   0               & \cdots & \cdots  &  \cdots& 0& \cdots\\
\cdots &0& \Fgh_M (U)  &   0       & \cdots &  \cdots & \cdots & 0& \cdots\\
\vdots & \vdots & \vdots & \vdots & \vdots & \vdots& \vdots& \vdots\\
\cdots &0 & 0& \cdots & \Fcogh_M (U)   & \cdots & \cdots &0 & \cdots\\
\cdots &0 & 0 &\cdots & 0   &  \Fcogh_M (U) &\cdots & 0& \cdots \\
\vdots & \vdots & \vdots & \vdots & \vdots & \vdots& \vdots& \vdots\\
\cdots &0 &0 &\cdots & 0& 0 &
\cdots & \Fcogh_M (U)& \cdots \\
\vdots &\vdots & \vdots & \vdots & \vdots & \vdots & \vdots& \vdots& \vdots
\end{pmatrix},
$$ where $\Fcogh_M (U) $ are in $(i,i)$-entries for all $i\in\Phi$,
and
$$
\scr G^{F,\Phi}(U)=
\begin{pmatrix}
 \vdots & \vdots & \vdots & \vdots & \vdots& \vdots& \vdots\\
 \cdots&\End_{\al C}(U)     & \E^{F}(U)_{s,s+1}                  &  \cdots& \E^{F}(U)_{s,m-1}& \E^{F}(U)_{s,m}&\cdots\\
\vdots &  \vdots & \vdots & \vdots & \vdots&\vdots&\vdots\\
\cdots&\E^{F}(U)_{0,s} & \cdots & \End_{\al C}(U)  &  \cdots & \E^{F}(U)_{0,m}&\cdots\\
\vdots &  \vdots & \vdots & \vdots & \vdots&  \vdots&\vdots \\
\cdots&\E^{F}(U)_{m,s}  & \cdots &\cdots &\E^{F}(U)_{m,m-1} &\End_{\al C}(U)&\cdots\\
\vdots &  \vdots & \vdots & \vdots & \vdots&  \vdots&\vdots \\
\end{pmatrix},
$$
where $\End_{\al C}(U) $ are in $(i,i)$-entries for all $i\in\Phi$.
And let
$$
J=
\begin{pmatrix}
\vdots &\vdots & \vdots & \vdots & \vdots & \vdots & \vdots& \vdots& \vdots\\
\cdots &\Fgh_M (V) & 0    &   0               & \cdots & \cdots  &  \cdots& 0& \cdots\\
\cdots &0& \Fgh_M (V)  &   0       & \cdots &  \cdots & \cdots & 0& \cdots\\
\vdots & \vdots & \vdots & \vdots & \vdots & \vdots& \vdots& \vdots\\
\cdots &0 & 0& \cdots & \Fgh_M (V)   & \cdots & \cdots &0 & \cdots\\
\cdots &0 & 0 &\cdots & 0   &  \Fgh_M (V) &\cdots & 0& \cdots \\
\vdots & \vdots & \vdots & \vdots & \vdots & \vdots& \vdots& \vdots\\
\cdots &0 &0 &\cdots & 0& 0 &
\cdots & \Fgh_M (V)& \cdots \\
\vdots &\vdots & \vdots & \vdots & \vdots & \vdots & \vdots& \vdots& \vdots
\end{pmatrix},
$$
where $\Fgh_M (M\oplus Y) $ are in $(i,i)$-entries for all $i\in\Phi$,
and
$$
\scr G^{F,\Phi}(V)=\begin{pmatrix}
 \vdots & \vdots & \vdots & \vdots & \vdots& \vdots& \vdots\\
 \cdots&\End_{\al C}(V)     & \E^{F}(V)_{s,s+1}                  &  \cdots& \E^{F}(V)_{s,m-1}& \E^{F}(V)_{s,m}&\cdots\\
\vdots &  \vdots & \vdots & \vdots & \vdots&\vdots&\vdots\\
\cdots&\E^{F}(V)_{0,s} & \cdots & \End_{\al C}(V)  &  \cdots & \E^{F}(V)_{0,m}&\cdots\\
\vdots &  \vdots & \vdots & \vdots & \vdots&  \vdots&\vdots \\
\cdots&\E^{F}(V)_{m,s}  & \cdots &\cdots &\E^{F}(V)_{m,m-1} &\End_{\al C}(V)&\cdots\\
\vdots &  \vdots & \vdots & \vdots & \vdots&  \vdots&\vdots \\
\end{pmatrix},
$$
where $\End_{\al C}(V) $ are in $(i,i)$-entries for all $i\in\Phi$.

\begin{lem} \label{3.3}
The $I$ and $J$ are ideals of $\scr G^{F,\Phi}(U)$ and $\scr G^{F,\Phi}(V)$, respectively.
\end{lem}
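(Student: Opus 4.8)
The plan is to verify directly that $I$ and $J$ satisfy the defining properties of a two-sided ideal of the locally $\Phi$-Beilinson-Green algebra, using the matrix description of multiplication in $\scr G^\Phi(U)$ together with the fact that $\Fgh_M(M\oplus X)$ is already an ideal of $\End_{\al T}(M\oplus X)$ (and likewise for $Y$). First I would recall that, by definition, $\scr G^\Phi(U) = (\E^{i-j,F,\Phi}_{\al T}(U))_{i,j\in\Phi}$ with multiplication $(xy)_{lt} = \sum_k x_{lk}F^{l-k}(y_{kt})$, and that $I$ is the ``diagonal'' subset whose $(i,i)$-entry is $\Fgh_M(U)\subseteq \End_{\al T}(U)$ and whose off-diagonal entries are $0$. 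So $I$ is visibly an additive subgroup of $\scr G^\Phi(U)$; the only thing to check is closure under left and right multiplication by arbitrary elements of $\scr G^\Phi(U)$.

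For right multiplication, take $x=(x_{ij})\in\scr G^\Phi(U)$ and $a=(a_{ij})\in I$, so $a_{ij}=0$ unless $i=j$, in which case $a_{ii}\in\Fgh_M(U)$. Then $(xa)_{lt} = \sum_k x_{lk}F^{l-k}(a_{kt}) = x_{lt}F^{l-t}(a_{tt})$, which is $x_{lt}F^{l-t}(a_{tt})\in\E^{l-t,F,\Phi}_{\al T}(U)$. The key point is that this composite lies in $\Fgh_M(U)$: indeed $a_{tt}=F^0(a_{tt})$ factors through an object of $\add(M)$ and is an $M$-ghost, hence so is any composite $x_{lt}\circ F^{l-t}(a_{tt})$ — the ideal property of $\Fgh_M(U)$ in the category $\scr G^\Phi(U)$ (equivalently, the fact that $\gh_M$ and $\al F_M$ are ideals, so their intersection is closed under composition on either side, together with the compatibility of $F$ with composition). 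One must also observe that when $l\neq t$ this entry sits in an off-diagonal slot, but that is fine: $I$ as I should restate it more carefully is not literally ``zero off the diagonal'' but rather the ideal generated by putting $\Fgh_M(U)$ in each diagonal slot, i.e. $I_{lt}=\Fgh_M(U)\cap\E^{l-t,F,\Phi}_{\al T}(U)$ for all $l,t$ — here one uses that $\Fgh_M(U)$ is a homogeneous (graded) ideal of $\E^{F,\Phi}_{\al T}(U)$. Left multiplication is symmetric: $(ax)_{lt}=\sum_k a_{lk}F^{l-k}(x_{kt}) = a_{ll}F^0(x_{lt})=a_{ll}x_{lt}$, again an $M$-ghost by the ideal property.

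The argument for $J$ inside $\scr G^\Phi(V)$, $V=M\oplus Y$, is word-for-word the same with $\Fgh_M(M\oplus Y)$ in place of $\Fgh_M(M\oplus X)$; one invokes that $\Fgh_M(M\oplus Y)$ is a graded ideal of the $\Phi$-Auslander--Yoneda algebra $\E^{F,\Phi}_{\al T}(V)$, which follows from the general fact (Section 2.2) that $\gh_M$ and $\al F_M$ are ideals of the orbit category, hence so is $\Fgh_M=\gh_M\cap\al F_M$, combined with the compatibility of composition with $F$ recorded in the definition of $\E^{i+j,F,\Phi}_{\al T}$. I expect the main (though modest) obstacle to be purely bookkeeping: one has to be careful that $I$ and $J$ are the \emph{homogeneous} extensions of $\Fgh_M$ across all matrix entries — not merely the diagonal ones as the displayed matrices might suggest at a glance — and to check that $\Fgh_M(U)$ really is closed under pre- and post-composition with the twisted maps $F^{l-k}(-)$ that appear in the matrix multiplication; this last point is exactly where the admissibility of $\Phi$ and the functoriality of $F$ enter, ensuring the composite stays in the prescribed graded piece and remains both an $M$-ghost and factorizable through $\add(M)$.
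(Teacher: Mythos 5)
Your matrix computations agree with the paper's, and you correctly isolate the crux of the matter: for a diagonal element $a\in I$ and an arbitrary $u\in\scr G^\Phi(U)$, the entries $(au)_{lt}=a_{ll}u_{lt}$ and $(ua)_{lt}=u_{lt}F^{l-t}(a_{tt})$ with $l\neq t$ land in off-diagonal slots of the product. But your resolution of this difficulty --- redefining $I$ so that its $(l,t)$-entry is the degree-$(l-t)$ graded piece of the factorizable (co)ghost ideal rather than $0$ --- proves a different statement from the lemma. The $I$ and $J$ of the lemma are literally diagonal, and that is what is used afterwards to identify $\scr G^\Phi(U)/I$ and $\scr G^\Phi(V)/J$ with $\End_{\scr G^{F,\Phi}/\Fcogh_{\al D''}}(U)$ and $\End_{\scr G^{F,\Phi}/\Fgh_{\al D''}}(V)$.

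The paper's proof instead shows that these off-diagonal products are \emph{zero}: $x_{ii}u_{ij}=0$ and $u_{ij}F^{i-j}(x_{jj})=0$ for $i\neq j$. That vanishing is the genuine content of the lemma, and it is exactly where the standing hypotheses $\al T(M,F^iX)=0=\al T(Y,F^iM)$ for $0\neq i\in\Phi$ enter: writing $x_{ii}=ab$ with $a\colon U\to M'$, $b\colon M'\to U$, $M'\in\add(M)$, the component of $u_{ij}$ landing in $F^{i-j}X$ dies after precomposition with $b$ because $\al T(M,F^{i-j}X)=0$, while the component landing in $F^{i-j}M$ dies by the coghost property of $x_{ii}$ in the orbit category (via Lemma \ref{fg}); the argument for $J$ is dual, using ghosts and $\al T(Y,F^iM)=0$. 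Your proposal never invokes these orthogonality conditions, so it cannot close this gap. Note also that your enlarged $I$ does coincide with the diagonal one, but only because the nonzero-degree graded pieces of $\Fgh_M$ and $\Fcogh_M$ vanish under the same hypotheses --- which is precisely the vanishing argument that is missing from your write-up.
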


\begin{proof}
Let
$$\begin{pmatrix}
 \vdots & \vdots & \vdots & \vdots & \vdots& \vdots& \vdots& \vdots\\
 \cdots & 0 & x_{ss}    &   0  & \cdots & \cdots  &  \cdots& \cdots\\
\vdots & \vdots & \vdots & \vdots & \vdots& \vdots& \vdots& \vdots\\
 \cdots& 0 & \cdots & x_{00}   & \cdots & \cdots &0 &\cdots \\
 \vdots & \vdots & \vdots & \vdots & \vdots& \vdots& \vdots& \vdots\\
 \cdots & 0 &\cdots & 0& 0 &
\cdots &x_{mm} &\cdots\\
 \vdots & \vdots & \vdots & \vdots & \vdots& & \vdots & \vdots
   \end{pmatrix},
$$
be a element of $I$, and $$
\begin{pmatrix}
\vdots &  \vdots & \vdots & \vdots & \vdots& \vdots& \vdots\\
\cdots &u_{ss}    & u_{s,s+1}                  &  \cdots& u_{s,m-1}& u_{sm}&\cdots\\
\vdots &  \vdots & \vdots &\vdots &  \vdots & \vdots & \vdots \\
\cdots &u_{0s} & \cdots & u_{00} &  \cdots & u_{0m}& \cdots\\
\vdots &  \vdots & \vdots & \vdots & \vdots&  \vdots & \vdots \\
\cdots&u_{ms}  & \cdots &\cdots &u_{m,m-1} &u_{mm}&\cdots\\
\vdots &  \vdots & \vdots & \vdots & \vdots& \vdots& \vdots\\
\end{pmatrix},
$$
be an element of $\scr G^{F,\Phi}(U)$.
Then

$$\begin{pmatrix}
 \vdots & \vdots & \vdots & \vdots & \vdots& \vdots& \vdots& \vdots\\
 \cdots & 0 & x_{ss}    &   0  & \cdots & \cdots  &  \cdots& \cdots\\
\vdots & \vdots & \vdots & \vdots & \vdots& \vdots& \vdots& \vdots\\
 \cdots& 0 & \cdots & x_{00}   & \cdots & \cdots &0 &\cdots \\
 \vdots & \vdots & \vdots & \vdots & \vdots& \vdots& \vdots& \vdots\\
 \cdots & 0 &\cdots & 0& 0 &
\cdots &x_{mm} &\cdots\\
 \vdots & \vdots & \vdots & \vdots & \vdots& & \vdots & \vdots
   \end{pmatrix}
\begin{pmatrix}
\vdots &  \vdots & \vdots & \vdots & \vdots& \vdots& \vdots\\
\cdots &u_{ss}    & u_{s,s+1}                  &  \cdots& u_{s,m-1}& u_{sm}&\cdots\\
\vdots &  \vdots & \vdots &\vdots &  \vdots & \vdots & \vdots \\
\cdots &u_{0s} & \cdots & u_{00} &  \cdots & u_{0m}& \cdots\\
\vdots &  \vdots & \vdots & \vdots & \vdots&  \vdots & \vdots \\
\cdots&u_{ms}  & \cdots &\cdots &u_{m,m-1} &u_{mm}&\cdots\\
\vdots &  \vdots & \vdots & \vdots & \vdots& \vdots& \vdots\\
\end{pmatrix}
$$
$$
=\begin{pmatrix}
\vdots &  \vdots & \vdots & \vdots & \vdots& \vdots& \vdots\\
\cdots&x_{ss}u_{ss}    & x_{ss}u_{s,s+1}    &  \cdots& x_{ss}u_{s,m-1}& x_{ss}u_{sm}&\cdots\\
\vdots &  \vdots & \vdots & \vdots & \vdots& \vdots& \vdots\\
\cdots&x_{00}u_{0s} & \cdots &x_{00} u_{00} &  \cdots & x_{00}u_{0m}&\cdots\\
\vdots &  \vdots & \vdots & \vdots & \vdots& \vdots& \vdots\\
\cdots&x_{mm}u_{m,s}  & \cdots &\cdots &x_{mm}u_{m,m-1} &x_{mm}u_{mm}&\cdots\\
\vdots &  \vdots & \vdots & \vdots & \vdots& \vdots& \vdots\\
\end{pmatrix}.$$

Then we get $x_{ii}u_{ij}=0$ for $i\neq j$, and $x_{ii}u_{ii}\in \Fcogh_M (U)$, since $\Fcogh_M (U)$ is an ideal of $\End_{\al C}(U)$.
Similarly,
$$
\begin{pmatrix}
\vdots &  \vdots & \vdots & \vdots & \vdots& \vdots& \vdots\\
\cdots &u_{ss}    & u_{s,s+1}                  &  \cdots& u_{s,m-1}& u_{sm}&\cdots\\
\vdots &  \vdots & \vdots &\vdots &  \vdots & \vdots & \vdots \\
\cdots &u_{0s} & \cdots & u_{00} &  \cdots & u_{0m}& \cdots\\
\vdots &  \vdots & \vdots & \vdots & \vdots&  \vdots & \vdots \\
\cdots&u_{ms}  & \cdots &\cdots &u_{m,m-1} &u_{mm}&\cdots\\
\vdots &  \vdots & \vdots & \vdots & \vdots& \vdots& \vdots\\
\end{pmatrix}
\begin{pmatrix}
 \vdots & \vdots & \vdots & \vdots & \vdots& \vdots& \vdots& \vdots\\
 \cdots & 0 & x_{ss}    &   0  & \cdots & \cdots  &  \cdots& \cdots\\
\vdots & \vdots & \vdots & \vdots & \vdots& \vdots& \vdots& \vdots\\
 \cdots& 0 & \cdots & x_{00}   & \cdots & \cdots &0 &\cdots \\
 \vdots & \vdots & \vdots & \vdots & \vdots& \vdots& \vdots& \vdots\\
 \cdots & 0 &\cdots & 0& 0 &
\cdots &x_{mm} &\cdots\\
 \vdots & \vdots & \vdots & \vdots & \vdots& & \vdots & \vdots
   \end{pmatrix},
$$
$$
=\begin{pmatrix}
 \vdots & \vdots & \vdots & \vdots & \vdots& \vdots& \vdots\\
 \cdots&u_{ss}x_{ss}    & u_{s,s+1} F^1x_{s+1,s+1}                 &  \cdots& u_{s,m-1}F^{m-1-s}x_{m-1,m-1}& u_{sm}F^{m-s}x_{mm}&\cdots\\
 \vdots & \vdots & \vdots & \vdots & \vdots& \vdots& \vdots\\
\cdots&u_{0s}F^sx_{0s} & \cdots & u_{00}x_{00} &  \cdots & u_{0m}F^mx_{mm}&\cdots\\
\vdots & \vdots & \vdots & \vdots & \vdots& \vdots& \vdots\\
\cdots&u_{m,s}F^{s-m}x_{ss}  & \cdots &\cdots &u_{m,m-1}F^{-1}x_{m-1,m-1} &u_{mm}x_{mm}&\cdots\\
 \vdots & \vdots & \vdots & \vdots & \vdots& \vdots& \vdots\\
 \end{pmatrix}.$$

Then we get $u_{ij}F^{i-j}x_{jj}=0$ for $i\neq j$, and $u_{ii}x_{ii}\in \Fcogh_M (U)$, since $\Fcogh_M (U)$ is an ideal of $\End_{\al C}(U)$.
\end{proof}

Thus we have a quotient algebra of $\scr G^{F,\Phi}(U)$ by $I$,
$$\frac{\scr G^\Phi(U)}{I}=\begin{pmatrix}
 \vdots & \vdots & \vdots & \vdots & \vdots& \vdots& \vdots\\
 \cdots&\frac{\End_{\al C}(U)}{\Fcogh_M (U)}      & \E^{F}(U)_{s,s+1}                  &  \cdots& \E^{F}(U)_{s,m-1}& \E^{F}(U)_{s,m}&\cdots\\
\vdots &  \vdots & \vdots & \vdots & \vdots&\vdots&\vdots\\
\cdots&\E^{F}(U)_{0,s} & \cdots & \frac{\End_{\al C}(U)}{\Fcogh_M (U)}  &  \cdots & \E^{F}(U)_{0,m}&\cdots\\
\vdots &  \vdots & \vdots & \vdots & \vdots&  \vdots&\vdots \\
\cdots&\E^{F}(U)_{m,s}  & \cdots &\cdots &\E^{F}(U)_{m,m-1} &\frac{\End_{\al C}(U)}{\Fcogh_M (U)}&\cdots\\
\vdots &  \vdots & \vdots & \vdots & \vdots&  \vdots&\vdots \\
\end{pmatrix}.
$$
Consequently, $\End_{\scr G^{F,\Phi}/\Fgh_{\al D''}}(U)=\frac{\scr G^\Phi(U)}{I}$.
Similarly, we have a quotient algebra
$\scr G^\Phi(V)$ by $J$, where
$$
\frac{\scr G^\Phi(V)}{J}=\begin{pmatrix}
\vdots &  \vdots & \vdots & \vdots & \vdots & \vdots\\
\cdots  &\frac{\E_{\al C}(V)}{\Fgh_M (V)}     & \E^{F}(V)_{s,s+1}     &  \cdots& \E^{F}(V)_{s,m-1}&\cdots  \\
\vdots &  \vdots & \vdots & \vdots & \vdots & \vdots\\
\cdots  &\E^{F}(V)_{0,s} & \cdots & \frac{\E_{\al C}(V)}{\Fgh_M (V)}  &  \cdots &\cdots\\
\vdots &\vdots &  \vdots & \vdots & \vdots & \vdots&   \\
\cdots  &\E^{F}(V)_{m,s}  & \cdots &\cdots &\frac{\E_{\al C}(V)}{\Fgh_M (V)} &\cdots\\
\vdots &  \vdots & \vdots & \vdots & \vdots & \vdots
\end{pmatrix}.$$
and $\End_{\scr G^{F,\Phi}/\Fgh_{\al D''}}(V)=\frac{\scr G^\Phi(V)}{J}$.

\subsection{Locally finite tilting family and locally endomorphism algebras }

For $i\in\Phi$, we denote by ${\al C}^{\Phi, i, F}$ the category with the same objects of $\Phi$-orbit $\Oc{\al C}{F}{\Phi}$, and the morphism space
${\al C}^{\Phi, i, F}(X,Y)=\bigoplus_{j\in\Phi}{\al C}(X,F^{j-i}Y)$, for all $X, Y\in \al C$. If $i=0$, then ${\al C}^{\Phi, i, F}=\Oc{\al C}{F}{\Phi}$. Let
$$
 X\xra f M_1\ra M_2\ra\cdots\ra M_n\xra g Y\draf{\delta}
$$
 be an $n$-$\mathbb{E}$-exangle in $\mathcal {C}$ such that $M_j\in\add(M)$ for all $j=1,\cdots,n$, and  that $f$ is a left $\add_{\Oc{\al C}{F}{\Phi}}(M)$-approximation
of $X$, $g$ is a right $\add_{\Oc{\al C}{F}{\Phi}}(M)$-approximation of $Y$ in $\Phi$-orbit category $\Oc{\al C}{F}{\Phi}$. Let $\cpx{T}$ be the complex
$$
0\lra X\xra f M_1\ra M_2\ra\cdots\lra M_n\oplus M\lra 0$$
with $X$ in degree zero.  Then it follows from Lemma \ref{4.2} that $\cpx{T}$ is self-orthogonal in $\Kb{{\al C}^{\Phi,i,F}/\Fcogh_{\al D'}}$. Set $U=M\oplus X$. By Lemma \ref{llx},
$$
\E^{\Phi, i, F}(U, -)={\al C}^{\Phi, i, F}(U, -): \add_{{\al C}^{\Phi, i, F}} U\lra \proj{\E^{\Phi, i, F}(U,U)}=\proj{\End_{{\al C}^{\Phi, i, F}}(U,U)}
$$
is fully faithful, which induces a fully faithful triangle functor
$$
{\al C}^{\Phi, i, F}(U, -): \Kb{\add_{{\al C}^{\Phi, i, F}} U}\lra \Kb{ \proj{\End_{{\al C}^{\Phi, i, F}}(U,U)}}.
$$
Thus, we get a full triangle embedding
$$
({\al C}^{\Phi, i, F}/\Fcogh_{\al D'})(U, -): \Kb{\add_{{\al C}^{\Phi, i, F}/\Fcogh_{\al D'}} U}\lra \Kb{ \proj{\End_{{\al C}^{\Phi, i, F}/\Fcogh_{\al D'}}(U,U)}}.
$$
By Lemma \ref{llx}, we get a full triangle embedding
$$
(\scr G^{\Phi,F}/\Fcogh_{\al D''})(U, -): \Kb{\add_{\scr G^{\Phi,F}/\Fcogh_{\al D''}} U}\lra \Kb{ \proj{\End_{\scr G^{\Phi,F}/\Fcogh_{\al D''}}(U,U)}},
$$
where $\al D''=\add_{\scr G^{\Phi,F}}(M)$.

Let $\tilde{\cpx{T_i}}:=({\al C}^{\Phi, i, F}/\Fcogh_{\al D'})(U, \cpx{T})$. Then $\tilde{\cpx{T_i}}$ is self-orthogonal in $\Kb{ \proj{\End_{{\al C}^{\Phi, i, F}/\cogh_{\al D'}}(U,U)}}$ by Lemma \ref{4.2}.
Moreover,
$$
\End_{\scr G^{\Phi,F}/\Fcogh_{\al D''}}(U,U)=\oplus_{i\in\Phi}({\al C}^{\Phi, i, F}/\Fcogh_{\al D'})(U, U)
$$
and
$$
({\al C}^{\Phi, i, F}/\Fcogh_{\al D'})(U, U)\in \thick(\{\tilde{\cpx{T_i}},i\in I\}),
$$
therefore, $\thick(\{\tilde{\cpx{T_i}},i\in I\})$ generates $\Kb{ \proj{\End_{\scr G^{\Phi,F}/\Fcogh_{\al D''}}}(U,U)}$ as a triangulated category. So we have the following lemma.

\begin{lem} \label{lem3.3.3}
 $\{\tilde{\cpx{T_i}},i\in I\}$ is
a locally finite tilting family over $\End_{\scr G^{\Phi,F}/\Fcogh_{\al D''}}(U,U)$.
\end{lem}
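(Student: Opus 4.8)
The plan is to verify Definition~3.3 condition by condition for the set $\{\tilde{\cpx{T_i}}, i\in\mathcal{I}\}$ viewed as complexes over $\Lambda:=\End_{\scr G^{\Phi,F}/\Fcogh_{\al D''}}(U,U)$, which by the identifications of Section~4.1 is the quotient algebra $\scr G^\Phi(U)/I$. The three conditions are: (1) each $\tilde{\cpx{T_i}}\in\Kb{\proj{\Lambda}}$; (2) $\Hom(\tilde{\cpx{T_i}},\tilde{\cpx{T_j}}[n])=0$ for $n\neq 0$ and all $i,j$; and (3) $\thick(\tilde{\cpx{T_i}}, i\in\mathcal{I})=\Kb{\proj{\Lambda}}$. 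Most of the work has, in fact, already been assembled in the discussion preceding the lemma, so the proof is largely a matter of citing it in the right order.

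First I would establish (1). The complex $\cpx{T}\colon 0\to X\xra{f} M_1\to\cdots\to M_{n-2}\oplus M\to 0$ lies in $\Kb{\add_{{\al T}^{\Phi,i,F}/\Fcogh_{\al D'}}U}$ with $U=M\oplus X$, since every term is a direct summand of a finite sum of copies of $U$. Applying the full triangle embedding $\scr G^{\Phi,F}(U,-)\colon \Kb{\add_{\scr G^{\Phi,F}/\Fcogh_{\al D''}}U}\to\Kb{\proj{\End_{\scr G^{\Phi,F}/\Fcogh_{\al D''}}(U,U)}}$ furnished by Lemma~\ref{llx} (which sends $U$-summands to finitely generated projective $\Lambda$-modules), we get that $\tilde{\cpx{T_i}}=\E^{\Phi,i,F}(U,\cpx{T})$ is a bounded complex of finitely generated projective $\Lambda$-modules, i.e. lies in $\Kb{\proj{\Lambda}}$. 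For (2), I would invoke Lemma~\ref{4.2}: the complex $\cpx{P}\colon 0\to X\xra{f}M_1\to\cdots\to M_{n-2}\to 0$ is self-orthogonal in $\Kb{{\al T}^{F,\Phi}/\Fcogh_{\al D}}$, and adding the summand $M$ (which lies in $\add(M)=\al D$, hence is killed by $\Fcogh_{\al D}$ in the relevant degree by Lemma~\ref{2.2}(3)) does not disturb self-orthogonality; since $\scr G^{\Phi,F}(U,-)$ is a full triangle embedding, self-orthogonality of $\cpx{T}$ transfers to self-orthogonality of $\tilde{\cpx{T_i}}$, and because $[n]$-shifts and the graded structure across the $\Phi$-indexed rows are all respected by these functors, $\Hom_{\Kb{\proj{\Lambda}}}(\tilde{\cpx{T_i}},\tilde{\cpx{T_j}}[n])=0$ for all $i,j\in\mathcal{I}$ and $n\neq 0$.

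The remaining condition (3) is the generation statement and is the step I expect to be the real obstacle. One must show that the smallest thick subcategory of $\Kb{\proj{\Lambda}}$ containing all the $\tilde{\cpx{T_i}}$ is everything. The strategy is: on the source side, the summands $M_1,\dots,M_{n-2}$ together with $X$ generate $\add U$, and since $\cpx{T}$ is built from these by successive mapping cones, the object $U$ itself—hence every indecomposable projective $\Lambda d_i$—lies in $\thick(\tilde{\cpx{T_i}})$ after transporting through the embedding $\scr G^{\Phi,F}(U,-)$; because $\Lambda=\bigoplus_{i\in\Phi}\Lambda d_i$ and $\Kb{\proj{\Lambda}}=\thick(\Lambda)$, this forces $\thick(\tilde{\cpx{T_i}},i\in\mathcal{I})=\Kb{\proj{\Lambda}}$. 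The delicate point is that with $\Phi$ infinite, $\Lambda$ has no unit and one must argue projective-generation of $\Kb{\proj{\Lambda}}$ row-by-row, using that each $\Lambda d_i$ is recovered from the single complex $\tilde{\cpx{T_i}}$ via the $i$-th shifted orbit category ${\al T}^{\Phi,i,F}$; here one needs the left $\add_{\Oc{\al T}{F}{\Phi}}(M)$-approximation property of $f$ and the right approximation property of $g$ to know that the cohomology of $\E^{\Phi,i,F}(U,\cpx{P})$ is concentrated so that the mapping-cone decomposition actually produces $U$. Once this is checked for one $i$, the argument is uniform over $i\in\mathcal{I}$ by the parallel structure of the rows, and the lemma follows.
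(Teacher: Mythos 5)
Your proposal is correct and follows essentially the same route as the paper, which likewise obtains condition (1) and the self-orthogonality condition (2) by pushing the complex $\cpx{T}$ through the full triangle embedding of Lemma \ref{llx} and invoking Lemma \ref{4.2}, and then asserts the generation condition (3). If anything you supply more detail on (3) than the paper does; the one quibble is that generation follows simply from $M$ being a shifted direct summand of $\cpx{T}$ together with iterated mapping cones recovering $X$, rather than from the approximation/cohomology-concentration properties you cite, which are really what drives condition (2).
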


\noindent{\bf Remark.}  (1) The complex $\oplus_{i\in\Phi}
\tilde{\cpx T_i}$ is a tilting complex in $\Cb{\proj{\End_{\scr G^{\Phi,F}/\Fcogh_{\al D''}}(U,U)}}$, if $\Phi$ is a finite admissible subset of $\mathbb{Z}$.

(2)
In fact, by the definition, we get
$$(\scr G^{F,\Phi}/\Fcogh_{\al D''})(U,X)=\begin{pmatrix}
\vdots &  \vdots & \vdots & \vdots & \vdots & \vdots\\
\cdots&\frac{\E_{\al T}(U,X)}{\Fcogh_M (U,X)}     & \E^{F}(U,X)_{s,s+1}                  &  \cdots& \E^{F}(U,X)_{s,m}&\cdots\\
\vdots &  \vdots & \vdots & \vdots & \vdots& \vdots\\
\cdots&\E^{F}(U,X)_{0,s} & \cdots & \frac{\E_{\al T}(U,X)}{\Fcogh_M (U,X)}   &  \cdots &\cdots\\
\vdots &  \vdots & \vdots & \vdots & \vdots&   \vdots \\
\cdots&\E^{F}(U,X)_{m,s}  & \cdots &\cdots  &\frac{\E_{\al T}(U,X)}{\Fcogh_M (U,X)} &\cdots\\
\vdots &  \vdots & \vdots & \vdots & \vdots & \vdots\\
\end{pmatrix},$$
and
 $$(\scr G^{F,\Phi}/\Fcogh_{\al D''})(U,M)=
\begin{pmatrix}
\vdots &  \vdots & \vdots & \vdots & \vdots & \vdots\\
\cdots&\E_{\al T}(U,M)    & \E^{F}(U,M)_{s,s+1}                  &  \cdots& \E^{F}(U,M)_{s,m}&\cdots\\
\vdots &  \vdots & \vdots & \vdots & \vdots& \vdots\\
\cdots&\E^{F}(U,M)_{0,s} & \cdots & \E_{\al T}(U,M)  &  \cdots &\cdots\\
\vdots &  \vdots & \vdots & \vdots & \vdots&   \vdots \\
\cdots&\E^{F}(U,M)_{m,s}  & \cdots &\cdots  &\E_{\al T}(U,M) &\cdots\\
\vdots &  \vdots & \vdots & \vdots & \vdots & \vdots\\

\end{pmatrix}.$$

To prove Theorem \ref{Theorem-ghost}, it suffices to show that the locally endomorphism ring of $\{\tilde{\cpx T_i},i\in\Phi\}$ is isomorphic to $\End_{(\scr G^{F,\Phi}/\Fcogh_{\al D''})}(V)$.

\begin{lem}\label{lem3.3.4}
There is a ring isomorphism between endomorphism ring $\End_{(\scr G^{F,\Phi}/\Fcogh_{\al D''})}(V)$ and locally endomorphism ring $(\Hom_{\scr K^b( \proj{\End_{\scr G^{F,\Phi}/\Fcogh_{\al D''}}(U,U)})}(\tilde{\cpx T_i},\tilde{\cpx{T_j}}))_{i,j\in\Phi}$.
\end{lem}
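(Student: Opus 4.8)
The plan is to compute the locally endomorphism ring of the tilting set by transporting the computation back along the fully faithful functor $\scr G^{\Phi,F}(U,-)$ and then reading off the answer from the $n$-angle, just as in \cite{CH}. Throughout write $U=M\oplus X$, $V=M\oplus Y$, and recall that $\tilde{\cpx{T_i}}=\E^{\Phi,i,F}(U,\cpx{T})$, where $\cpx{T}$ is the complex $0\to X\to M_1\to\cdots\to M_{n-2}\oplus M\to 0$ with $X$ in degree zero; since the terms of $\cpx{T}$ lie in $\add(U)$, each $(\cpx{T},i)$ lies in $\Kb{\add_{\scr G^{\Phi,F}/\Fcogh_{\al D''}}U}$, and the full triangle embedding $\scr G^{\Phi,F}(U,-)$ from Lemma~\ref{llx} carries $(\cpx{T},i)$ to $\tilde{\cpx{T_i}}$. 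Hence the first step is to reduce the assertion to the identification, for all $i,j\in\Phi$, of $\Hom_{\Kb{\scr G^{\Phi,F}/\Fcogh_{\al D''}}}((\cpx{T},i),(\cpx{T},j))$ together with its composition. Using parts (2) and (3) of Lemma~\ref{llx} I would then identify the Hom-complex between $\tilde{\cpx{T_i}}$ and $\tilde{\cpx{T_j}}$ with the bounded complex whose degree-$k$ term is $\bigoplus_l \E^{i-j,F}(\cpx{T}^l,\cpx{T}^{l+k})$ formed in $\Oc{\al T}{F}{\Phi}$ and pushed to the quotient by the factorizable $\al D$-coghost ideal; by Lemma~\ref{fg}(2) this ideal coincides on the relevant objects with $\Fcogh_M$, so from here the computation takes place inside the framework of \cite{CH}.

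Next, the self-orthogonality of $\cpx{T}$ established in Lemma~\ref{4.2} shows $\Hom_{\Kb{\ldots}}(\tilde{\cpx{T_i}},\tilde{\cpx{T_j}}[k])=0$ for all $k\neq 0$, so it remains to compute $H^0$ of each of these Hom-complexes. For the diagonal entries $i=j$ this $H^0$ is computed by the $n$-angle diagram chase which uses that $f$ is a left and $g$ a right $\add_{\Oc{\al T}{F}{\Phi}}(M)$-approximation and that $f_0$, $f_{n-1}$ are weak (co)kernels — this is exactly the computation of the endomorphism ring of the tilting complex carried out in the proof of the main theorem of \cite{CH} — and yields a natural isomorphism $H^0\cong \End_{\al T}(V)/\Fgh_M(V)$. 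For the off-diagonal entries $i\neq j$ (with $i-j\in\Phi$, the other case being trivial on both sides) I would run the parallel chase in the $\Phi$-orbit category: a chain map $\cpx{T}\to F^{i-j}\cpx{T}$, taken modulo homotopy and modulo $\Fcogh_M$, is pinned down by the $n$-angle to an element of $\E^{i-j,F}_{\al T}(V)$, and the vanishing hypotheses $\al T(M,F^iX)=0=\al T(Y,F^iM)$ for $0\neq i\in\Phi$ are precisely what kills the correction and obstruction terms appearing in the chase, so that $H^0\cong \E^{i-j,F}_{\al T}(V)$ with no quotient.

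Finally I would assemble these entries into the matrix $(\Hom_{\Kb{\ldots}}(\tilde{\cpx{T_i}},\tilde{\cpx{T_j}}))_{i,j\in\Phi}$ and check that the composition induced by that of complexes is, via Lemma~\ref{llx}(3), exactly the twisted matrix multiplication $z_{lt}=\sum_k x_{lk}F^{l-k}y_{kt}$ of the Beilinson-Green construction; comparing entrywise with the definition of $J$ then gives an isomorphism onto $\scr G^{\Phi}(V)/J=\End_{\scr G^{F,\Phi}/\Fgh_{\al D''}}(V)$. The main obstacle is the $H^0$ computation of the previous paragraph: arranging the $n$-angle diagram chase so that the homotopies, the weak-(co)kernel conditions and the approximation properties interact correctly, and, in the twisted case, verifying that factorizable $M$-coghosts on the $U$-side match bijectively with factorizable $M$-ghosts on the $V$-side; everything else is formal bookkeeping resting on Lemmas~\ref{llx}, \ref{4.2} and \ref{fg}.
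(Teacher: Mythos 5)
Your plan follows the same route as the paper: transport the computation back along the full embedding of Lemma \ref{llx}, identify each entry $\Hom_{\Kb{\cdot}}(\tilde{\cpx T_i},\tilde{\cpx T_j})$ with $\E^{F}(V)_{i,j}$ (quotiented by $\Fgh_M(V)$ exactly when $i=j$) via the $n$-angle axioms (F2)--(F3), Lemma \ref{2.2} and Lemma \ref{fg}, with the vanishing hypotheses $\al T(M,F^iX)=0=\al T(Y,F^iM)$ killing the off-diagonal corrections, and then match compositions with the twisted matrix multiplication. The one step you defer --- showing that a chain map on $\cpx{T}$ induces $0$ in $\End_{\al T}(V)/\Fgh_M(V)$ if and only if it is null-homotopic modulo $\Fcogh$ (the paper's $\Ker\theta_{i,j}=\Ker\varphi_{i,j}$, argued in both directions by building the homotopy from the weak-kernel/approximation properties) --- is precisely the chase the paper writes out, so your outline is correct but its central computation remains to be executed.
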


{\bf Proof.}
To show the lemma, for any $i,j\in\Phi$, we will construct an isomorphism from
locally endomorphism ring $(\Hom_{\scr K^b( \proj{\End_{\scr G^{F,\Phi}/\Fcogh_{\al D''}}(U,U)})}(\tilde{\cpx T_i},\tilde{\cpx{T_j}}))_{i,j\in\Phi}$
 to $\scr G^{F,\Phi}(V)/\Fgh_{\al D''}(V)$. Since

$$(\Hom_{\scr K^b( \proj{\End_{\End_{\scr G^{F,\Phi}/\Fcogh_{\al D''}})}}}(\tilde{\cpx T_i},\tilde{\cpx{T_j}}))_{i,j\in\Phi}$$
$$\cong
\begin{pmatrix}
\cdots &\cdots &\cdots\\
\cdots& \Hom_{\scr K^b(\proj{\End_{\scr G^{\Phi,F}/\Fcogh_{\al D''}}(U,U)})}(\tilde{\cpx T_i},\tilde{\cpx T_j})&\cdots\\
\cdots&\cdots&\cdots
\end{pmatrix}_{\Phi\times \Phi},
$$
we will construct the isomorphism by calculating
$\Hom_{\scr K^b(\proj{\End_{\scr G^{\Phi,F}/\Fcogh_{\al D''}}(U,U)})}(\tilde{\cpx T_i},\tilde{\cpx T_j}).$
By Lemma \ref{llx},
$$
\Hom_{\scr K^b(\proj{\End_{\scr G^{\Phi,F}/\Fcogh_{\al D''}}(U,U)})}(\tilde{\cpx T_i},\tilde{\cpx T_j})\simeq \Hom_{\Kb{{\al C}^{i-j, F,\Phi}/\Fcogh_{\al D'}}}(\cpx{T},\cpx{T})\simeq\Hom_{\Kb{{\al C}/\Fcogh_{\al D'}}}(\cpx{T},F^{i-j}(\cpx{T})),
$$
where $\al D'=\add_{\al C^{i-j,F,\Phi}}(M)$.
 \medskip
For each chain map $\cpx{u}$ in $\Hom_{\Cb{\al C}}(\cpx{T},F^{i-j}(\cpx{T}))$, by Lemma \ref{Lemma-n-exangle}(3),  there is a morphism $u\in\Hom_{\al C}(Y\oplus M,F^{i-j}(Y\oplus M)$ such that the diagram
$$
\quad\xymatrix{
T^0\ar[r]^{d_T^0}\ar[d]^{u^0}   &T^1\ar[r]\ar[r]^{d_T^1}\ar[d]^{u^1} & \cdots\ar[r]^{d_T^{n-1}} &T^{n}\ar[d]^{u^n}\ar[r]^{\tilde{g}}& Y\oplus M\ar@{-->}[r]^{\tilde{\delta}}\ar@{-->}[d]^{u{\quad\quad\quad\quad\quad\quad(\bigstar)}} & \\
F^{i-j}(T^0)\ar[r]^{F^{i-j}d_T^0} &F^{i-j}(T^1)\ar[r]^{F^{i-j}d_T^1} & \cdots\ar[r]^{F^{i-j}d_T^{n-1}} &F^{i-j}(T^n)\ar[r]^{F^{i-j}\tilde{g}}& F^{i-j}(Y\oplus M)\ar@{-->}[r]^(0.7){\Psi(\tilde{\delta})} & \\
}
$$
is commutative, where $\tilde{g}=\left[\begin{smallmatrix}g&0\\0&1\end{smallmatrix}\right]: T^n=M_n\oplus M\lra Y\oplus M$,  $\tilde{\delta}=\left[\begin{smallmatrix} \delta\\
0\end{smallmatrix}\right]\in \mathbb{E}(Y\oplus M,T^0)$ and $\Psi(\tilde{\delta})=\Upsilon_{(F^{i-j}(T^{n+1}),F^i(T^0))}(\Upsilon_{(F^{i-j-1}(T^{n+1}),F^{i-j-1}(T^0))}\cdots(\Upsilon_{(T^{n+1},T^0)}(\tilde{\delta})))
\in\mathbb{E}(F^{i-j}(T^{n+1}),F^{i-j}(T^0))$, and $T^{n+1}=Y\oplus M$.
If $u'$ is another morphism in $\End_{\al C}(Y\oplus M)$ making the above diagram commutative, then $\tilde{g}(u-u')=0=(u-u')^{\sharp}\Psi(\tilde{\delta})$.  Since $\tilde{g}$ is a right $\add_{\al D}(M)$-approximation by our assumption, the morphism $(u-u')$ belongs to $\gh_M(Y\oplus M)$ by Lemma \ref{2.2} (1). It follows from $(u-u')^{\sharp}\Psi(\tilde{\delta})=0$ that $u-u'$ factorizes through $F^{i-j}(T^n)$, which is in $\add_{\al D'}(M)$. Hence $u-u'$ is in $\Fgh_{\al D'}(Y\oplus M)$. Denote by $\bar{u}$ the morphism in ${\al C}^{\Phi, i, F}/\Fgh_{\al D'}$ corresponding to $u$. Thus, we get a map
$$
\theta_{i,i}: \End_{\Cb{\al C}}(\cpx{T}, \cpx{T})\lra \End_{\al C/\Fgh_M}(Y\oplus M)\simeq\frac{\E_{\al C}(V)}{\Fgh_M (V)}
$$
sending $\cpx{u}$ to $\bar{u}$, which is clearly a ring homomorphism.
If $i\neq j$, by the proof of Lemma \ref{fg}, $\Fgh_{\al D'}(M,Y\oplus M)=0$ and $\Fgh_{\al D'}(Y,Y\oplus M)=0$. Therefore, $u-u'=0$. Then we get a map
$$\theta_{i,j}: \Hom_{\Cb{ \al C^{i-j, F,\Phi}}}(\cpx {T},\cpx {T})\lra \E^{F}(M\oplus Y)_{i,j}  $$
sending $\cpx{u}$ to $u$.
Then we get a map
$$
\theta=\begin{pmatrix}
\cdots &\cdots &\cdots\\
\cdots& \theta_{i,j}&\cdots\\
\cdots&\cdots&\cdots
\end{pmatrix}_{\Phi\times \Phi}: (\Hom_{\Cb{{\al C}^{i-j, F,\Phi}}}(\cpx{T},\cpx{T}))_{i,j\in\Phi}\lra \End_{\scr G^{F,\Phi}/\Fgh_{\al D''}}(V)
$$
where
$$\End_{\scr G^{F,\Phi}/\Fgh_{\al D''}}(V)=\begin{pmatrix}
\vdots &  \vdots & \vdots & \vdots & \vdots & \vdots\\
\cdots  &\frac{\E_{\al T}(V)}{\Fgh_M (V)}      & \E^{F}(V)_{s,s+1}     &  \cdots& \E^{F}(V)_{s,m-1}&\cdots  \\
\vdots &  \vdots & \vdots & \vdots & \vdots & \vdots\\
\cdots  &\E^{F}(V)_{0,s} & \cdots & \frac{\E_{\al T}(V)}{\Fgh_M (V)}   &  \cdots &\cdots\\
\vdots &\vdots &  \vdots & \vdots & \vdots & \vdots&   \\
\cdots  &\E^{F}(V)_{m,s}  & \cdots &\cdots &\frac{\E_{\al T}(V)}{\Fgh_M (V)}  &\cdots\\
\vdots &  \vdots & \vdots & \vdots & \vdots & \vdots
\end{pmatrix}.$$

{\bf Claim 1.} The map $\theta=\begin{pmatrix}
\cdots &\cdots &\cdots\\
\cdots& \theta_{i,j}&\cdots\\
\cdots&\cdots&\cdots
\end{pmatrix}$ is surjective.

Indeed, for $\theta_{i,j}$, and for each $u$ in $\Hom_{\al C}(V,F^{i-j}(V))$, since $\tilde{g}$ is a right $\add_{\al D}(M)$-approximation, there is $u^{n}: T^{n}\lra T^{n}$ such that $\tilde{g}u=u^{n}F^{i-j}\tilde{g}$. Thus, by Lemma \ref{Lemma-n-exangle}, we get morphisms $u^l: T^l\lra T^l, l=0, \cdots, n-1$, making the above diagram $(\bigstar)$ commutative. This shows that $\theta$ is a surjective ring homomorphism.

{\bf Claim 2.} We shall prove that there is a surjective ring homomorphism
$$
\varphi=\begin{pmatrix}
\cdots &\cdots &\cdots\\
\cdots& \varphi_{i,j}&\cdots\\
\cdots&\cdots&\cdots
\end{pmatrix}_{\Phi\times \Phi}: (\End_{\Cb{\al C^{i-j, F,\Phi}}}(\cpx{T}))_{i,j\in\Phi}\lra (\End_{\Kb{{\al C^{i-j, F,\Phi}}/\Fcogh{\al D'}}}(\cpx{T}))_{i,j\in\Phi}
$$
It suffices to show that for any $i,j\in\Phi$, there is a surjective homomorphism
$$\varphi_{i,j}: \End_{\Cb{\al C^{i-j, F,\Phi}}}(\cpx{T})\lra \End_{\Kb{{\al C^{i-j, F,\Phi}}/\Fcogh{\al D'}}}(\cpx{T}), $$
which is  the composite of the ring homomorphism
$$\End_{\Cb{\al C^{i-j, F,\Phi}}}(\cpx{T})\ra\End_{\Cb{{\al C^{i-j, F,\Phi}}/\Fcogh{\al D'}}}(\cpx{T})$$
 induced by the canonical functor ${\al C^{i-j, F,\Phi}}\ra \al C^{i-j, F,\Phi}/\Fcogh_M$ and the canonical surjective ring homomorphism $$\End_{\Cb{\al C^{i-j, F,\Phi}}/\Fcogh{\al D'}}(\cpx{T})\lra\End_{\Kb{{\al C^{i-j, F,\Phi}}/\Fcogh{\al D'}}}(\cpx{T}).$$

{\bf Claim 3.} The maps $\theta_{i,j}$ and $\varphi_{i,j}$ have the same kernel.

In the case $i=j$,
$$\varphi_{i,i}: \End_{\Cb{\al C}}(\cpx{T})\lra \End_{\Kb{{\al C}/\Fcogh_M}}(\cpx{T}), $$
we shall show that $\theta_{i,i}$ and $\varphi_{i,i}$ have the same kernel.
A chain map $\cpx{u}$ is in $\Ker\varphi_{i,i}$ if and only if there exist $h^l: T^l\ra T^{l-1}, l=1,\cdots, n-1$ in $\mathcal{C}$ such that  $u^0-d_T^0h^1$, $u^l-h^lF^{i-j}d_T^{l-1}-d_T^lh^{l+1}, i=1,\cdots, n-1$, and $u^{n-2}-h^{n-2}d_T^{n-3}$ are all in $\Fcogh_{\al D'}$.  Using the fact that $T^i\in\add(M)$ for all $i>0$,  one can see, by Lemma \ref{2.2}, that this is equivalent to saying that $u^{n-2}-h^{n-2}d_T^{n-3}=0$,  $u^l=h^ld_T^{l-1}+d_T^lh^{l+1}$ for $l=1,\cdots, n-1$, and $u^0-d_T^0h^1\in\Fcogh_M(T^0)$.

\smallskip

 Let $\cpx{u}$ be in $\Ker\varphi_{i,i}$, and suppose that  $u\in\End_{\al C}(Y\oplus M)$ fits the commutative diagram $(\bigstar)$ above. Then $\theta(\cpx{u})=\bar{u}$.
We have $u^{n-2}=h^{n-2}d_T^{n-3}$, and consequently $\tilde{g}u=u^{n-2}\tilde{g}=h^{n-2}d_T^{n-3}\tilde{g}$, which is zero by Lemma \ref{Lemma-n-exangle} (1).  It follows from Lemma \ref{2.2} (1) that $u\in\gh_M(Y\oplus M)$. The fact $\cpx{u}\in\Ker\varphi_{i,i}$ also implies that $u^0-d_T^0h^1\in\Fcogh_M(T^0)$.  In particular, the morphism $u^0-d_T^0h^1$ factorizes through an object in $\add(M)$.  Assume that $u^0-d_T^0h^1=ab$ for some $a\in{\al T}(T^0, M')$ and $b\in{\al T}(M', T^0)$ with $M'\in\add(M)$. Since $d_T^0$ is a left $\add(M)$-approximation, we see that $a$ factorizes through $d_T^0$, and hence $u^0-d_T^0h^1$ factorizes through $d_T^0$. Consequently, the morphism $u^0$ also factorizes through $d_T^0$, say, $u^0=d_T^0\alpha$. By Lemma \ref{5.13}, $u$ factorizes through $T^{n-2}\in\add(M)$.  Altogether, we have shown that  $u$ belongs to $\Fgh_M(Y\oplus M)$. It follows that $\bar{u}=0$ and $\cpx{u}\in\Ker\theta$.  Hence $\Ker\varphi\subseteq\Ker\theta$.

  Conversely, suppose that $\cpx{u}\in\Ker\theta_{i,i}$ and $u\in\End_{\al T}(Y\oplus M)$ fits the commutative diagram $(\bigstar)$.  Then $\theta(\cpx{u})=\bar{u}=0$, that is, $u\in\Fgh_M(Y\oplus M)$.  Since $\tilde{g}$ is a right $\add(M)$-approximation, by Lemma \ref{2.2} (1), we have $\tilde{g}u=0$. Thus $u^{n-2}\tilde{g}=0$. By Lemma \ref{Lemma-n-exangle} (2), there is a morphism $h^{n-2}: T^{n-2}\ra T^{n-3}$ such that $u^{n-2}=h^{n-2}d_T^{n-3}$. Now $(u^{n-3}-d_T^{n-3}h^{n-2})d_T^{n-3}=u^{n-3}d_T^{n-3}-d_T^{n-3}u^{n-2}=0$. If $n\geq 4$, then, by Lemma \ref{Lemma-n-exangle} (2), there is a morphism $h^{n-3}: T^{n-3}\ra T^{n-4}$ such that $u^{n-3}-d_T^{n-3}h^{n-2}=h^{n-3}d_T^{n-4}$. Moreover, $(u^{n-4}-d_T^{n-4}h^{n-3})d_T^{n-4}=d_T^{n-4}u^{n-3}-d_T^{n-4}h^{n-3}d_T^{n-4}=d_T^{n-4}d_T^{n-3}h^{n-2}=0$.  Repeating this process, we get $h^l: T^l\ra T^{l-1}, l=1,\cdots, n-2$ such that $u^{n-2}=h^{n-2}d_T^{n-3}$, $u^l=h^ld_T^{l-1}+d_T^lh^{l+1}$ for $l=1,\cdots, n-3$, and $(u^0-d_T^0h^1)d_T^0=0$. Since $d_T^0$ is a left $\add(M)$-approximation, we deduce from Lemma \ref{2.2} (2) that $u^0-d_T^0h^1\in\cogh_M(T^0)$.  Since $u$ factorizes through an object in $\add(M)$ and $\tilde{g}$ is a right $\add(M)$-approximation, it is easy to see that $u$ factorizes through $\tilde{g}$, consequently, $u^0$ factorizes through $d_T^0$ by Lemma \ref{5.13}. Hence $u^0-d_T^0h^1$ factorizes through an object in $\add(M)$, and consequently belongs to $\Fcogh_M(T^0)$. Thus we have shown that $\cpx{u}\in\Ker\varphi$, and  $\Ker\theta\subseteq\Ker\varphi$.

In the case $i\neq j$,
$$\varphi_{i,j}: \End_{\Cb{\al C^{i-j, F,\Phi}}}(\cpx{T})\lra \End_{\Kb{{\al C^{i-j, F,\Phi}}/\Fcogh{\al D'}}}(\cpx{T}), $$
we shall show that $\theta_{i,j}$ and $\varphi_{i,j}$ have the same kernel.
A chain map $\cpx{u}$ is in $\Ker\varphi_{i,j}$ if and only if there exist $h^l: T^l\ra F^{i-j}T^{l-1}, l=1,\cdots, n-2$ in $\mathcal{C}$ such that  $u^0-d_T^0h^1$, $u^l-h^lF^{i-j}d_T^{l-1}-d_T^lh^{l+1}, l=1,\cdots, n-3$, and $u^{n-2}-h^{n-2}d_T^{n-3}$ are all in $\Fcogh_{\al D'}$.  Using the fact that $F^{i-j}T^l\in\add_{T^{i-j, F,\Phi}}(M)$ for all $l>0$,  one can see, by Lemma \ref{2.2}, that this is equivalent to saying that $u^{n-2}-h^{n-2}d_T^{n-3}=0$,  $u^l=h^id_T^{l-1}+d_T^ih^{i+1}$ for $i=1,\cdots, n-3$, and $u^0-d_T^0h^1\in\Fcogh_{\al D'}(T^0)$.
Since $d^0=f$ is a left $\add_{\Oc{\al C}{F}{\Phi}}(M)$-approximation, then $\Fcogh_{\al D'}(T^0)$ consisting $g\in\al C(T^0, F^{i-j}T^0)$ such that $gd^0=0$ and $g$ factorizes through $d^0$, then $g=0$ by $\al C(M, F^iX)=0$. Therefore, $\Fcogh_{\al D'}(T^0)=0$. Then $u^0-d_T^0h^1=0$.

\smallskip

 Let $\cpx{u}$ be in $\Ker\varphi_{i,j}$, and suppose that  $u\in\End_{\al C}(Y\oplus M)$ fits the commutative diagram $(\bigstar)$ above. Then $\theta(\cpx{u})=u$.
We have $u^{n-2}=h^{n-2}d_T^{n-3}$, and consequently $\tilde{g}u=u^{n-2}\tilde{g}=h^{n-2}d_T^{n-3}\tilde{g}$, which is zero by Lemma \ref{Lemma-n-exangle} (1).  It follows from Lemma \ref{2.2} (1) that $u\in\gh_{\al D'}(Y\oplus M)$. The fact $\cpx{u}\in\Ker\varphi_{i,j}$ also implies that $u^0-d_T^0h^1=0$.   Thus it follows from Lemma \ref{5.13} that $u$ factorizes through $F^{i-j}T^{n-2}$.  Altogether, we have shown that  $u$ belongs to $\Fgh_{\al D'}(Y\oplus M)$ and $\Fgh_{\al D'}(Y\oplus M)=0$ by Lemma \ref{fg}. It follows that $u=0$ and $\cpx{u}\in\Ker\theta_{i,j}$.  Hence $\Ker\varphi_{i,j}\subseteq\Ker\theta_{i,j}$.

  Conversely, suppose that $\cpx{u}\in\Ker\theta_{i,j}$ and $u\in\End_{\al C}(Y\oplus M)$ fits the commutative diagram $(\bigstar)$.  Then $\theta(\cpx{u})=u=0$.  Since $\tilde{g}$ is a right $\add_{\Oc{\al C}{F}{\Phi}}(M)$-approximation, by Lemma \ref{2.2} (1), we have $\tilde{g}u=0$. Thus $u^{n-2}F^{i-j}\tilde{g}=0$. By Lemma \ref{Lemma-n-exangle} (2), there is a morphism $h^{n-2}: T^{n-2}\ra T^{n-3}$ such that $u^{n-2}=h^{n-2}F^{i-j}d_T^{n-3}$. Now $(u^{n-3}-d_T^{n-3}h^{n-2})d_T^{n-3}=u^{n-3}d_T^{n-3}-d_T^{n-3}u^{n-2}=0$. If $n\geq 4$, then, by Lemma \ref{Lemma-n-exangle} (2), there is a morphism $h^{n-3}: T^{n-3}\ra T^{n-4}$ such that $u^{n-3}-d_T^{n-3}h^{n-2}=h^{n-3}F^{i-j}d_T^{n-4}$. Moreover, $(u^{n-4}-d_T^{n-4}h^{n-3})d_T^{n-4}=d_T^{n-4}u^{n-3}-d_T^{n-4}h^{n-3}d_T^{n-4}=d_T^{n-4}d_T^{n-3}h^{n-2}=0$.  Repeating this process, we get $h^l: T^l\ra T^{l-1}, l=1,\cdots, n-2$ such that $u^{n-2}=h^{n-2}F^{i-j}d_T^{n-3}$, $u^l=h^lF^{i-j}d_T^{l-1}+d_T^lh^{l+1}$ for $l=1,\cdots, n-3$, and $(u^0-d_T^0h^1)F^{i-j}d_T^0=0$. Since $d_T^0$ is a left $\add_{\Oc{\al C}{F}{\Phi}}(M)$-approximation, we deduce from Lemma \ref{2.2} (2) that $u^0-d_T^0h^1\in\cogh_{\al D'}(T^0)$.  Since $u=0$, by Lemma \ref{5.13}, $(u^0)_{\sharp}\tilde{\delta}=(u)^{\sharp}\Theta(\tilde{\delta})=0$, $u^0$ factorizes through $d_T^0$. Hence  $u^0-d_T^0h^1$ factorizes through an object in $\al D'$, and consequently belongs to $\Fcogh_{\al D'}(T^0)=0$. Consequently, $u^0-d_T^0h^1=0$. Thus we have shown that $\cpx{u}\in\Ker\varphi_{i,j}$, and  $\Ker\theta_{i,j}\subseteq\Ker\varphi_{i,j}$.

From the above argument, it follows that $\theta$ and $\varphi$ have the same kernel, so we have the following diagram
$$
\xymatrix{
0&\Ker(\theta)&(\Hom_{\Cb{{\al C}^{i-j, F,\Phi}}}(\cpx{T},\cpx{T}))_{i,j\in\Phi} & \End_{\scr G^{F,\Phi}/\Fgh_{\al D''}}(V)&0\\
0&\Ker(\varphi)&(\Hom_{\Cb{{\al C}^{i-j, F,\Phi}}}(\cpx{T},\cpx{T}))_{i,j\in\Phi} & (\Hom_{\Kb{{\al C}^{i-j, F,\Phi}/\Fcogh_{\al D'}}}(\cpx{T},\cpx{T}))_{i,j\in\Phi}&0.
\ar"1,1";"1,2"^{}
\ar"2,1";"2,2"^{}
\ar"1,2";"1,3"^{}
\ar"2,2";"2,3"^{}
\ar"1,4";"1,5"^{}
\ar"2,4";"2,5"^{}
\ar"1,3";"1,4"^{\theta}
\ar"2,3";"2,4"^{\varphi}
\ar@{=}"1,2";"2,2"
\ar@{=}"1,3";"2,3"
\ar@{-->}"1,4";"2,4"^{\simeq}
}
$$
Hence the locally endomorphism ring
$$(\Hom_{\scr K^b( \proj{\End_{\scr G^{\Phi,F}/\Fcogh_{\al D''}}(U,U)})}(\tilde{\cpx T_i},\tilde{\cpx{T_j}}))_{i,j\in\Phi}$$  and endomorphism ring
$$\End_{\scr G^{F,\Phi}/\Fgh_{\al D''}}(V)$$ are isomorphic, and the result then follows.

{\bf Proof of Theorem \ref{Theorem-ghost}} By Lemmas \ref{lem3.3.3} and \ref{lem3.3.4}, the theorem is straightforward from Theorem \ref{ll}. $\square$\\

\section{Derived equivalences of locally finite $\Phi$-Beilinson-Green algebras}\label{derivedequivalence}

\subsection{Derived equivalences of locally finite $\Phi$-Beilinson-Green algebras from a given derived equivalence}

In this subsection, we will prove that derived equivalent locally $\Phi$-Beilinson-Green algebras can be constructed from a given derived equivalence.

First, we recall the definition of stable functor of non-negative triangle functor between derived categories of abelian categories.  Suppose that $\mathcal{A}$ and $\mathcal{B}$ are abelian categories with enough projective objects.  The full subcategories of projective objects are denoted by $\proja$ and $\projb$, respectively.  The corresponding stable categories are denoted by $\sta$ and $\stb$, respectively.
We also write $\Db{\mathcal{A}}$, $\Df{\mathcal{A}}$ and $\Dz{\mathcal{A}}$ for the full subcategories of $\D{\mathcal{A}}$ consisting of complexes isomorphic to bounded complexes, complexes bounded above, and complexes bounded below, respectively. Moreover, for integers $m\leq n$ and for a collection of objects $\mathcal{X}$, we write $\D[{[m, n]}]{\mathcal{X}}$ for the full subcategory of $\D{\mathcal{A}}$ consisting of complexes $\cpx{X}$ isomorphic in $\D{\mathcal{A}}$ to complexes  with terms in $\mathcal{X}$ of the form
$$0\lra X^m\lra\cdots\lra X^n\lra 0.$$

\begin{defn}\cite[Definition 4.1]{Hu2017}
A triangle functor $G: \Db{\mathcal{A}}\lra \Db{\mathcal{B}}$  is called  uniformly bounded if there are integers $r<s$ such that $G(X)\in \D[{[r, s]}]{\mathcal{B}}$ for all $X\in\mathcal{A}$, and is called non-negative if $G$ satisfies the following conditions:

\smallskip
(1) $G(X)$ is isomorphic to a complex with zero homology in all negative degrees for all $X\in\mathcal{A}$.

(2) $G(P)$ is isomorphic to a complex in $\Kb{\projb}$ with zero terms in all negative degrees for all $P\in\proja$.
\label{def-uni-bounded-non-negative}
\end{defn}

Now suppose that
$$G: \Db{\mathcal{A}}\lra\Db{\mathcal{B}}$$
is a non-negative triangle functor.  For any object $X\in \mathcal{A}$, by definition, $G(X)$ has no homology in negative degrees. Take a projective resolution of $G(X)$ and then do good truncation at degree zero. Then there is a triangle
 $$\cpx{U}_X\lraf{i_X}G(X)\lraf{\pi_X} M_X\lraf{\mu_X}\cpx{U}_X[1]\quad\quad (\clubsuit)$$
in $\Db{\mathcal{B}}$ with $M_X\in\mathcal{B}$ and $\cpx{U}_X\in\D[{[1,n_X]}]{\projb}$ for some $n_X>0$.
We can define a functor $\bar{G}: \sta\lra \stb$
  as follows. For each $X\in\mathcal{A}$, we fix a triangle  $$\xi_{X}: \quad \cpx{U}_X\lraf{i_X}G(X)\lraf{\pi_X} M_X\lraf{\mu_X}\cpx{U}_X[1]$$
in $\Db{\mathcal{B}}$ with $M_X\in\mathcal{B}$, and $\cpx{U}_X$ a complex in $\D[{[1, n_X]}]{\projb}$ for some $n_X>0$.  The existence is guaranteed by $(\clubsuit)$. For each morphism $f: X\ra Y$ in $\mathcal{A}$, we can form a commutative diagram in $\Db{\mathcal{B}}$:
 $$\xymatrix@M=1.5mm{
 \cpx{U}_X\ar[r]^{i_X}\ar[d]^{a_f} & G(X)\ar[r]^{\pi_X}\ar[d]^{F(f)} & M_X\ar[r]^{\mu_X}\ar[d]_{b_f} &\cpx{U}_X[1]\ar[d]^{a_f[1]}\\
 \cpx{U_Y}\ar[r]^{i_Y} & G(Y)\ar[r]^{\pi_Y} & M_Y\ar[r]^{\mu_Y} &\cpx{U_Y}[1]\\
 }$$
If $b'_f$ is another morphism such that $\pi_Xb'_f=G(f)\pi_Y$, then $\pi_X(b_f-b'_f)=0$, and $b_f-b'_f$ factorizes through $\cpx{U}_X[1]$. By \cite[Corollary 3.4]{Hu2017}, the map $b_f-b'_f$ factorizes through $U_X^1$ which is projective. Hence the morphism $\underline{b_f}\in\stb(M_X, M_Y)$ is uniquely determined by $f$. Moreover, suppose that $f$ factorizes through a projective object $P$ in $\mathcal{A}$, say $f=gh$ for $g: X\ra P$ and $h: P\ra Y$.  Then $\pi_X(b_f-b_gb_h)=G(f)\pi_Y-G(g)\pi_Pb_h=G(f)\pi_Y-G(g)G(h)\pi_Y=0$.  Hence $b_f-b_gb_h$ factorizes through $\cpx{U}_X[1]$, and factorizes through $U_X^1$ by \cite[Corollary 3.4]{Hu2017}.  Thus $b_f$ factorizes through $P\oplus U_X^1$ which is projective.  Hence $\underline{b_f}=0$.  Then we get a well-defined map
$$\phi: \Hom_{\sta}(X, Y)\lra \Hom_{\stb}(M_X, M_Y), \quad \underline{f}\mapsto \underline{b_f}. $$
It is easy to say that $\phi$ is functorial in $X$ and $Y$.  Defining $\bar{G}(X):=M_X$ for each $X\in\mathcal{A}$ and $\bar{F}(\underline{f}):=\phi(\underline{f})$ for each morphism $f$ in $\mathcal{A}$, we get a functor
$$\bar{G}: \sta\lra\stb $$
which is called {\em the stable functor} of $G$.

\medskip
For derived equivalences between module categories of rings, we have the following lemma.
\begin{lem} \cite[Lemma 4.2]{Hu2017}
Let $G: \Db{A\Modc}\lra\Db{B\Modc}$ be a derived equivalence between two rings $A$ and $B$. Then

$(1)$  $G$ is uniformly bounded.

$(2)$ $G$ is non-negative  if and only if the tilting complex associated to $F$ is isomorphic in $\Kb{\proj{B}}$ to a complex with zero terms in all positive degrees. In particular, $F[i]$ is non-negative for sufficiently small $i$.
\end{lem}

So, we can get a stable functor associated for each derived equivalence between module categories of rings.

Let $S$ be a commutative artin ring, and let $A$, $B$ be Artin $S$-algebras. Suppose that  $G: \Db{A\Modc}\lra\Db{B\Modc}$ is a derived equivalent,  and let $\cpx{P}$ be the tilting
complex associated to $G$. Without loss of generality, as in \cite{Hu2011,P1}, up to shift, we assume
that $\cpx{P}$ is a radical complex of the following form
$$
0\ra P^{-n}\ra P^{-n+1} \ra \cdots\ra P^{-1}\ra P^{0}\ra 0,
$$
and that $P^{0}\neq 0 \neq P^{-n}$. Then there is a tilting complex
$\bar{P}^{\bullet}$ for $B$ associated to the quasi-inverse $G'$ of $G$
of the form
$$
0\ra \bar{P}^{0}\ra \bar{P}^{1} \ra \cdots\ra \bar{P}^{n-1}\ra
\bar{P}^{n}\ra 0,
$$ with the differentials being radical maps.
Suppose that $\cpx{P}$ and $\bar{P}^{\bullet}$ are the tilting
complexes associated to $G$ and the quasi-inverse of $G$, respectively.
Set $P=\oplus^{n}_{i=1}P^{-i}$ and $\bar{P}=\oplus^{n}_{i=1}\bar{P}^{i}$.
Then $G$ is called an almost $\nu$-stable derived equivalence \cite{Hu2011}
provided that $\add(P)=\add(\nu_{A}P)$
and $\add(\bar{P})=\add(\nu_{B}\bar{P})$, where $\nu$ is the Nakayama functor.

We recall some basic facts on almost $\nu$-stable derived
equivalences, which will be used in our proofs.

\begin{lem}\cite[Lemma 3.3]{Hu2013}\label{hu}
Let $G: \Db{A\Modc}\ra \Db{B\Modc}$ be an almost $\nu$-stable
derived equivalence. Suppose that  $\cpx{P}$ and $\bar{P^{\bullet}}$ are tilting complexes associated to $G$ and to the quasi-inverse $G'$, respectively.
Then we have the following:

$(1)$ For any $A$-module $X$, the complex $G(X)$ is isomorphic
in $\Db{B\Modc}$ to a radical complex $\bar{P}^{\bullet}_{X}$ of the form
$$
0\ra \bar{P}_{X}^{0}\ra \bar{P}_{X}^{1} \ra \cdots\ra
\bar{P}_{X}^{n-1}\ra \bar{P}_{X}^{n}\ra 0
$$
with $\bar{P}_{X}^{i}\in\add(P)$ for all $i>0$.

$(2)$ For any $B$-module $Y$, the complex $G'(Y)$ is isomorphic
in $\Db{A}$ to a radical complex $P^{\bullet}_{Y}$ of the form

$$
0\ra P_{Y}^{-n}\ra P_{Y}^{-n+1} \ra \cdots\ra P_{Y}^{-1}\ra
P_{Y}^{0}\ra 0
$$
with $P_{Y}^{i}\in\add(\bar{P})$ for all $i<0$.

$(3)$ There is a stable equivalence $\bar{G}:\stmodc{A}\lra\stmodc{B}$ with $\bar{G}(X)=\bar{P}_{X}^{0}$ for each $A$-module $X$.

$(4)$ There is a stable equivalence $\overline{G'}:\stmodc{B}\lra\stmodc{A}$ with $\overline{G'}(Y)=P_{Y}^{0}$ for each $B$-module $Y$.
 Moreover, the functor $\overline{G'}$ is a quasi-inverse of $\bar{G}$ defined in $(3)$.

\end{lem}

\begin{thm}\label{PPP}
Let $A$ and $B$ be Artin algebras and let $\Phi$ be an admissible subset of $\mathbb{Z}$. Suppose that  $G: \Db{A\Modc}\lra\Db{B\Modc}$ is an almost $\nu$-derived equivalent.
Denote by $\bar{G}$ the stable functor induced by $G$.  If $X$ is an $A$-module,
then locally $\Phi$-Beilinson-Green algebras $\scr G^{\Phi}(A\oplus X)$
and $\scr G^{\Phi}(B\oplus \bar{G}(X))$ are derived equivalent.
\end{thm}
\begin{proof}
By Lemma \ref{hu}(3), we have $\bar{G}(X)=P^0_X$.
Let $\bar{\cpx{T}}=\bar{P}^{\bullet}\oplus\bar{P}^{\bullet}_{X}$. Set $\cpx{T_i}:=\E^{\Phi, i}(V, \cpx{\bar{T}})$.
The complex $\cpx{T_i}$ has the following form
$$
0\ra \E^{\Phi,i}_{B}(V,\bar{T}^{0})\ra \E^{\Phi,i}_{B}(V,\bar{T}^{1})\ra\E^{\Phi,i}_{B}(V,\bar{T}^{2})\ra \cdots\ra \E^{\Phi,i}_{B}(V,\bar{T}^n)\ra 0,
$$
where $\bar{T}^i=\bar{P}^i\oplus{\bar{P}_{X}}^i$ for all $0\leq i\leq n$. In the following, we shall prove that $\{\cpx{T_i},i\in\Phi\}$ is
a locally finite tilting family over $\scr G^{\Phi}(B\oplus \bar{G}(X))$. Set $V:=B\oplus \bar{G}(X)$.

$(1)$
$\Hom_{\Kb{\scr G^{\Phi}(V)}}(\E_{B}^{\Phi,i}(V,\cpx{\bar{T}}),\E_{B}^{\Phi,j}(V,\cpx{\bar{T}})[m])=0$
for any $i,j\in \Phi$ and $m\neq 0$.

Assume that $f=(f^s)_{s\geq0}$ is in
$\Hom_{\Kb{\scr G^{\Phi}(V)}}(\E_{B}^{\Phi,i}(V,\cpx{\bar{T}}),\E_{B}^{\Phi,j}(V,\cpx{\bar{T}})[m])$.
Then we have the following commutative diagram

$$
\xymatrix{ 0\ar[r]\ar[d]
&\E^{\Phi,i}_{B}(V,\bar{T}^{0})\ar[r]\ar^{f^0}[d]
  & \E^{\Phi,i}_{B}(V,\bar{T}^{1})\ar[r]\ar^{f^1}[d]&
  \E^{\Phi,i}_{B}(V,\bar{T}^{2}) \ar^{f^2}[d]\ar[r]&\cdots\\
 \E^{\Phi,j}_{B}(V,\bar{T}^{m-1})\ar[r] &
\E^{\Phi,j}_{B}(V,\bar{T}^{m})\ar[r]
  & \E^{\Phi,j}_{B}(V,\bar{T}^{m+1})\ar[r]&
  \E^{\Phi,j}_{B}(V,\bar{T}^{m+2})\ar[r]&\cdots. }
$$
By the construction as above, we see that
$\E_{B}^{\Phi}(V,\bar{T}^{t})=0$ for $t<0$ since
$\cpx{\bar{T}}=\bar{P}^{\bullet}\oplus\bar{P}^{\bullet}_{X}$ has the
form as follows:
$$
0\ra P^0\oplus \bar{P}_{X}^{0}\ra \bar{P}^1\oplus P_{X}^{1} \ra \cdots\ra
\bar{P}^{n-1}\oplus \bar{P}_{X}^{n-1}\ra \bar{P}^n\oplus \bar{P}_{X}^{n}\ra 0,
$$
where $\bar{T}^0=\bar{P}^0\oplus \bar{P}_{X}^{0}$ is non-projective and
$\bar{T}^i=P^i\oplus P_{X}^{i}$ are projective for $1\leq i\leq n$. It
follows from Lemma \ref{llx} that $f^k=\mu(g^k)$, where
$g^k:\bar{T}^k\ra\bar{T}^{k+m}[i-j]$, for all $1\leq k\leq n$. From the
commutativity of the chain map with differentials, we have
$$
\E_{B}^{\Phi,i}(V,d)f^{k+1}=f^k\E_{B}^{\Phi,j}(V,d),
$$
that is,
$$
\mu(d^k)\mu(g^{k+1})=\mu(g^k)\mu(d^{k+m}).
$$
Therefore, $\mu(d^kg^{k+1}-g^kd^{k+m})=0$. Hence, we get $d^kg^{k+1}-g^kd^{k+m}=0$. Consequently,
$\cpx{g}=(g^k)\in\Hom_{\Kb{\add_{B}V}}(\cpx{\bar{T}},\cpx{\bar{T}}[i-j+m])$
and $\cpx{f}=(f^k)=(\mu(g^k))$. By \cite[Lemma 3.6(1)]{Hu2013},
$\Hom_{\Kb{\add_{B}V}}(\cpx{\bar{T}},\cpx{\bar{T}}[m])=0$ for all $m\neq 0$. Therefore,
$\cpx{g}$ is null-homotopic, and consequently, $\cpx{f}$ is
null-homotopic. This shows the exceptionality of the complex
$\E_{B}^{\Phi,i}(V,\cpx{\bar{T}})$.

$(2)$
 $\thick(\{\E_{B}^{\Phi,i}(V,\cpx{\bar{T}})\}_{i\in\Phi})=\Kb{\proj{\scr G^{\Phi}(V)}}$.

Indeed, by \cite[Lemma 3.6(2)]{Hu2013}, we see that $\add \cpx{\bar{T}}$ generates
$\Kb{\add_{B}V}$ as a triangulated category. Since $\scr G^{\Phi}_{B}(V)=\oplus_{i\in\Phi}\E_{B}^{\Phi,i}(V)$, it is easy to see that
$\scr G^{\Phi}_{B}(V)$ is in the smallest subcategory of $\Kb{\proj{\scr G^{\Phi}_{B}(V)}}$ which is generated
by $\thick(\{\E_{B}^{\Phi,i}(V,\cpx{\bar{T}})\}_{i\in\Phi})$, where $V$ is in
$\thick{\add_{B}\cpx{\bar{T}}}$. Therefore, $\thick(\{\E_{B}^{\Phi,i}(V,\cpx{\bar{T}})\}_{i\in\Phi})=\Kb{\proj{\scr G^{\Phi}(V)}}$.

(3) The locally endomorphism ring of $\cpx{T}_i$ is isomorphic to $\scr G_A^{\Phi}(U)$.
By Lemma \ref{llx}, we have the following
$$
\Hom_{\Kb{\scr G^{\Phi}(V)}}(\E_{B}^{\Phi,i}(V,\cpx{\bar{T}}),\E_{B}^{\Phi,j}(V,\cpx{\bar{T}}))
\cong
\left\{
\begin{array}{cc}
\End(U), & if\ \  i\neq j;\\
\E(U)_{i,j},  & if\ i=j.
\end{array}
\right.
$$
It follows from Theorem \ref{ll} that, the locally $\Phi$-Beilinsion--Green algebras $\scr G^{\Phi}(A\oplus X)$
and $\scr G^{\Phi}(B\oplus \bar{G}(X))$ are derived equivalent.

\end{proof}
\begin{rem}
(1) Hu and Xi \cite[Theorem 3.4(2)]{Hu2013} proved that, if $A$ and $B$ are
almost $\nu$-stable derived equivalent, then the $\Phi$-Auslander-Yoneda algebras of $A$ and $B$ are stably
equivalent of Morita type for a finite admissible set $\Phi$ of $\mathbb{N}$. After that, Peng \cite{Pe} proved that
if $A$ and $B$ are
almost $\nu$-stable derived equivalent, then the $\Phi$-Beilinsion-Greeen algebras of $A$ and $B$ are stably
equivalent of Morita type for a finite admissible set $\Phi$ of $\mathbb{N}$. In \cite{P3},we have proved that, if $A$ and $B$ are
stably equivalent of Morita type, then $\Phi$-Beilinson-Green algebras of $A$ and $B$ are stably equivalences of Morita type for a finite admissible set $\Phi$ of $\mathbb{N}$.

(2) Here an admissible set $\Phi$ of $\mathbb{N}$ can be a infinite set of $\mathbb{N}$, so we generalize Peng's result to infinite case.
\end{rem}

In another direction, we consider the derived equivalences instead of almost $\nu$-stable derived equivalences, and we give some restrictions on the modules.
Let $_{A}\mathcal {X}=\{X\in A\modc\, |\,\Ext^{i\geq 1}(X, A)=0\}$ be a full subcategory of $A\modc$.
Ringel and Zhang \cite{RZ} call these modules semi-Gorenstein projective modules. In the following, we will
study the locally $\Phi$-Beilinson-Green algebras of semi-Gorenstein projective modules.

\begin{lem}\cite[Lemma 3.3]{P1}\label{PP}  Let $G: \Db{A}\lra \Db{B}$ be a derived equivalence between
Artin algebras $A$ and $B$, and let $G'$ be the quasi-inverse of $G$.
Suppose that $\cpx{P}$ and $\bar{P}^{\bullet}$ are the tilting
complexes associated to $G$ and $G'$, respectively. Then

$(i)$ For $X\in _{A}\mathcal {X}$, the complex $G(X)$ is isomorphic
in $\Db{B}$ to a radical complex $\bar{P}^{\bullet}_{X}$ of the form
$$
0\ra \bar{P}_{X}^{0}\ra \bar{P}_{X}^{1} \ra \cdots\ra
\bar{P}_{X}^{n-1}\ra \bar{P}_{X}^{n}\ra 0
$$
with $\bar{P}_{X}^{0}\in _{B}\mathcal {X}$ and $\bar{P}_{X}^{i}$
projective $B$-modules for $1\leq i\leq n$.

$(ii)$ For $Y\in_{B}\mathcal {X}$, the complex $G'(Y)$ is isomorphic
in $\Db{A}$ to a radical complex $P^{\bullet}_{Y}$ of the form

$$
0\ra P_{Y}^{-n}\ra P_{Y}^{-n+1} \ra \cdots\ra P_{Y}^{-1}\ra
P_{Y}^{0}\ra 0
$$
with $P_{Y}^{-n}\in_{A}\mathcal {X}$ and $P_{Y}^{i}$ projective
$A$-modules for $-n+1\leq i\leq 0$.
\end{lem}

Suppose that  $G: \Db{A\Modc}\lra\Db{B\Modc}$ is a derived equivalent between left coherent rings $A$ and $B$. Then we can have a stable functor $\bar{G}$ between finitely presented modules $A\modc$ and $B\modc$. Recall that $A$ is a left coherent ring if and only if $A\modc$ the category consisting of finitely presented $A$-modules is an abelian category.

\begin{thm}\label{ps1}
Let $\Phi$ be an admissible subset of $\mathbb{Z}$. Suppose that  $G: \Db{A\Modc}\lra\Db{B\Modc}$ is a derived equivalent between left coherent rings $A$ and $B$.
Denote by $\bar{G}$ the stable functor induced by $G$.
 If $X$ is a finitely presented semi-Gorenstein projective $A$-module, then locally $\Phi$-Beilinson-Green algebras $\scr G^{\Phi}(A\oplus X)$
and $\scr G^{\Phi}(B\oplus \bar{G}(X))$ are derived equivalent.
\end{thm}

\begin{proof} The proof is similar to that of Theorem \ref{PPP}. For the convenience we give the details here.
By Lemma \ref{PP}, if $X$ is an $A$-module with $\Ext^i_{A}(X, A)=0$ for $i\geq 1$, the complex $G(X)$ is isomorphic
in $\Db{B}$ to a radical complex $\bar{P}^{\bullet}_{X}$ of the form
$$
0\ra \bar{P}_{X}^{0}\ra \bar{P}_{X}^{1} \ra \cdots\ra
\bar{P}_{X}^{n-1}\ra \bar{P}_{X}^{n}\ra 0
$$
with $\bar{P}_{X}^{i}$
projective $B$-modules for $1\leq i\leq n$ and $\bar{G}(X)=P^0_X$ is a $B$-module with $\Ext^i_{B}(\bar{P}_{X}^0, B)=0$ for $i\geq 1$.

Let $\bar{\cpx{T}}=\bar{P}^{\bullet}\oplus\bar{P}^{\bullet}_{X}$. Set $\cpx{T_i}:=\E^{\Phi, i}(V, \cpx{\bar{T}})$.
In the following, we will show that $\{\cpx{T_i},i\in\Phi\}$ is
a locally tilting family over $\scr G^{\Phi}(B\oplus \bar{G}(X))$. Set $V:=B\oplus \bar{G}(X)$.

$(1)$
$\Hom_{\Kb{\scr G^{\Phi}(V)}}(\E_{B}^{\Phi,i}(V,\cpx{\bar{T}}),\E_{B}^{\Phi,j}(V,\cpx{\bar{T}})[m])=0$
for any $i,j\in \Phi$ and $m\neq 0$.

Assume that $f=(f^s)_{s\geq0}$ is in
$\Hom_{\Kb{\scr G^{\Phi}(V)}}(\E_{B}^{\Phi,i}(V,\cpx{\bar{T}}),\E_{B}^{\Phi,j}(V,\cpx{\bar{T}})[m])$.
Then we have the following commutative diagram

$$
\xymatrix{ 0\ar[r]\ar[d]
&\E^{\Phi,i}_{B}(V,\bar{T}^{0})\ar^{\E_{B}^{\Phi,i}(V,d^0)}[r]\ar^{f^0}[d]
  & \E^{\Phi,i}_{B}(V,\bar{T}^{1})\ar^{\E_{B}^{\Phi,i}(V,d^1)}[r]\ar^{f^1}[d]&
  \E^{\Phi,i}_{B}(V,\bar{T}^{2}) \ar^{f^2}[d]\ar[r]&\cdots\\
 \E^{\Phi,j}_{B}(V,\bar{T}^{m-1})\ar^{\E_{B}^{\Phi,i}(V,d^{m-1})}[r] &
\E^{\Phi,j}_{B}(V,\bar{T}^{m})\ar^{\E_{B}^{\Phi,i}(V,d^m)}[r]
  & \E^{\Phi,j}_{B}(V,\bar{T}^{m+1})\ar^{\E_{B}^{\Phi,i}(V,d^{m+1})}[r]&
  \E^{\Phi,j}_{B}(V,\bar{T}^{m+2})\ar[r]&\cdots. }
$$
By the construction as above, we see that
$\E_{B}^{\Phi}(V,\bar{T}^{t})=0$ for $t<0$ since
$\cpx{\bar{T}}=\bar{P}^{\bullet}\oplus\bar{P}^{\bullet}_{X}$ has the
form as follows:
$$
0\ra P^0\oplus \bar{P}_{X}^{0}\ra \bar{P}^1\oplus P_{X}^{1} \ra \cdots\ra
\bar{P}^{n-1}\oplus \bar{P}_{X}^{n-1}\ra \bar{P}^n\oplus \bar{P}_{X}^{n}\ra 0,
$$
where $\bar{T}^0=\bar{P}^0\oplus \bar{P}_{X}^{0}$ is non-projective and
$\bar{T}^i=P^i\oplus P_{X}^{i}$ are projective for $1\leq i\leq n$. It
follows from Lemma \ref{llx} that $f^k=\mu(g^k)$, where
$g^k:\bar{T}^k\ra\bar{T}^{k+m}[i-j]$, for all $1\leq k\leq n$. From the
commutativity of the chain map with differentials, we have
$$
\E_{B}^{\Phi,i}(V,\cpx{d})f^{k+1}=f^k\E_{B}^{\Phi,j}(V,\cpx{d}),
$$
that is,
$$
\mu(d^k)\mu(g^{k+1})=\mu(g^k)\mu(d^{k+m}).
$$
Therefore, $\mu(d^kg^{k+1}-g^kd^{k+m})=0$. Hence, we get $d^kg^{k+1}-g^kd^{k+m}=0$. Consequently,
$\cpx{g}=(g^k)\in\Hom_{\Kb{\add_{B}V}}(\cpx{\bar{T}},\cpx{\bar{T}}[i-j+m])$
and $\cpx{f}=(f^k)=(\mu(g^k))$. By \cite[Lemma 3.7(1)]{P1},
$\Hom_{\Kb{\add_{B}V}}(\cpx{\bar{T}},\cpx{\bar{T}}[m])=0$ for all $m\neq 0$.
Therefore, $\cpx{g}$ is null-homotopic, and consequently, $\cpx{f}$ is
null-homotopic. This shows the exceptionality of the complex
$\E_{B}^{\Phi,i}(V,\cpx{\bar{T}})$.

$(2)$
 $
\thick(\{\E_{B}^{\Phi,i}(V,\cpx{\bar{T}})\}_{i\in\Phi})=\Kb{\proj{\scr G^{\Phi}(V)}}$.

Indeed, by \cite[Lemma 3.7(2)]{P1}, we see that $\add\cpx{\bar{T}}$ generates
$\thick(\add_{B}V)$ as a triangulated category. Since $\scr G^{\Phi}_{B}(V)=\oplus_{i\in\Phi}\E_{B}^{\Phi,i}(V)$, it is easy to see that
$\scr G^{\Phi}_{B}(V)$ is in the smallest subcategory of $\Kb{\proj{\scr G^{\Phi}_{B}(V)}}$ which is generated
by $\thick(\{\E_{B}^{\Phi,i}(V,\cpx{\bar{T}})\}_{i\in\Phi})$, where $V$ is in
$\add_{B}\cpx{\bar{T}}$. Therefore, $
\thick(\{\E_{B}^{\Phi,i}(V,\cpx{\bar{T}})\}_{i\in\Phi})=\Kb{\proj{\scr G^{\Phi}_{B}(V)}}$.

(3) The locally endomorphism ring of $\cpx{T}_i$ is isomorphic to $\scr G_A^{\Phi}(U)$.
By Lemma \ref{llx}, we have the following
$$
\Hom_{\Kb{\scr G^{\Phi}(V)}}(\E_{B}^{\Phi,i}(V,\cpx{\bar{T}}),\E_{B}^{\Phi,j}(V,\cpx{\bar{T}}))
\cong
\left\{
\begin{array}{cc}
\End(U), & if\ \  i\neq j;\\
\E(U)_{i,j},  & if\ i=j.
\end{array}
\right.
$$
Then by Theorem \ref{ll}, the locally $\Phi$-Beilinsion-Green algebras $\scr G^{\Phi}(A\oplus X)$
and $\scr G^{\Phi}(B\oplus \bar{G}(X))$ are derived equivalent.

\end{proof}

\subsection{Derived equivalences for locally finite Beilinson-Green algebras from graded derived equivalences of group graded algebras}

Construction of tilting complexes for group graded algebras was primarily motivated by the problem of finding reduction methods for Brou\'e's Abelian Defect Group Conjecture.

Let $k$ be a field and $G$ be a group (not necessarily finite). Suppose that $R=\bigoplus_{g\in G}R_g$ and $S=\bigoplus_{g\in G}S_g$ are $G$-graded $k$-algebras such that $R$ is $k$-flat. We denote by $R\Gr$ the category of $G$-graded $R$-modules, and by $R\gr$ the category of finitely generated $G$-graded $R$-modules.

The group $G$ acts on $G$-graded $R$-modules $M\in R\Gr$ by letting $M(g)=\bigoplus_{h\in G}M(g)_h$ be the $g$-suspension of $M$, where $M(g)_h=M_{hg}$ for all $g,h\in G$. If $R$ is strongly graded, then $G$ acts on $A$-modules $X\in A\Modc$ by conjugation $X\mapsto R_g\otimes_A X$. Note that $(R\otimes_A X)(g)$ is naturally isomorphic to  $R\otimes_A (R_g\otimes_A X)$ in $R\Gr$.

 A $G$-graded $(R,S)$-bimodule $M$ can be regarded as an  $R\otimes S^{\mathrm{op}}$-module graded by the $G\times G$-set $G\times G/\delta(G)$, where $\delta(G)$ is the diagonal subgroup of $G\times G$, with $1$-component $M_1$ a module over the diagonal subalgebra \[\Delta(R\otimes S^{\mathrm{op}}):=(R\otimes S^{\mathrm{op}})_{\delta(G)}=\bigoplus_{g\in G}R_g\otimes S_{g^{-1}}.\] If $R$ and $S$ are strongly graded, then $M$ and $(R\otimes S^{\mathrm{op}})\otimes_{\Delta(R\otimes S^{\mathrm{op}})}M_1$ are naturally isomorphic $G$-graded $(R,S)$-bimodules.

Recall that an object $\tilde T$ of $\D{\Gr{R}}$ is called a $G$-{\it graded tilting complex} if it satisfies the following conditions:
\begin{enumerate}
\item[(i)] $\tilde T\in R\Grperf$; this means that, regarded as a complex of $R$-modules,  $\tilde T\in R\perf$, that is $\tilde T$ is bounded, and its terms are finitely generated projective $R$-modules.
\item[(ii)] $\bigoplus_{g\in G}\Hom_{\D{\Gr{R}}}(\tilde T,\tilde T(g)[n])=0$ for $n\neq0$.
\item[(iii)] $\mathrm{add}\{\tilde T(g)\mid g\in G\}$ generates $\Grperf{R}$ as a triangulated category.
\end{enumerate}

The following result was proved in \cite[Theorem 2.4]{Mar} and was reproved \cite[Theorem 2.4]{MarP}, based on Keller's approach \cite{Keller1994}, but note that the assumption that $G$ is finite is not needed.

\begin{thm}\label{gd} The following statements are equivalent:

$(1)$ There is a $G$-graded tilting complex $\tilde T\in\D{\Gr{R}}$ and an isomorphism $S\simeq\End_{\D{R}}(\tilde T)^{\mathrm{op}}$ of $G$-graded algebras.

$(2)$ There is a complex $\tilde U$ of $G$-graded $(R,S)$-bimodules such that the functor
\[\tilde U\otimesL_S-: \D{S}\lra \D{R}\]
is an equivalence.

$(3)$ There are equivalences
\[F: \D{R}\lra \D{S} \quad and \quad F^{\mathrm{gr}}: \D{\Gr{R}}\lra \D{\Gr{S}}\]
of triangulated categories such that $F^{gr}$ is $G$-graded functor and the diagram
\[\xymatrix@M=2mm{ \D{\Gr{R}}\ar[d]^{\mathscr{U}} \ar[r]^{F^{\mathrm{gr}}} & \D{\Gr{S}}\ar[d]^{\mathscr{U}}  \\  \D{R}\ar[r]^{F} & \D{S},}\]
is commutative.

$(4)$ There are equivalences
\[
F_{\mathrm{perf}}: \D{\perf{R}}\lra \D{\perf{S}} \quad and \quad F_{\mathrm{perf}}^{\mathrm{gr}}: \D{\Grperf{R}}\lra \D{\Grperf{S}}
\]
of triangulated categories such that $F_{\mathrm{perf}}^{\mathrm{gr}}$ is $G$-graded functor and
$\mathscr{U}\circ F_{\mathrm{perf}}^{\mathrm{gr}}=F_{\mathrm{perf}}\circ\mathscr{U}$.

{\rm(5) (provided that $R$ and $S$ are strongly graded)}  There are (bounded) complexes $U$ of $\Delta(R\otimes S^{\mathrm{op}})$ modules and  $V$ of $\Delta(S\otimes R^{\mathrm{op}})$-modules, and isomorphisms $U\otimesL_{B} V\simeq A$ in $\mathscr{D}^b(\Delta(R\otimes R^{\mathrm{op}}))$ and $V\otimesL_{A}U\simeq B$ in $\mathscr{D}^b(\Delta(S\otimes S^{\mathrm{op}}))$.
\end{thm}\label{mac}

Associated a $G$-graded $k$-algebra $A$, there is a Beilinson-Green algebra $\overline{A}$ defined as a $G\times G$-matrix algebra with $(\overline{A}_{gh})_{g,h\in G}$, where
$\overline{A}_{gh}=A_{gh^{-1}}$. If $M=\oplus_{g\in G}M_g$ is a $G$-graded $A$-$B$-bimodule, then $\bar{M}=(M_{gh^{-1}})_{g,h\in G}$ is a $\overline{A}$-$\overline{B}$-bimodule.

We can get derived equivalences between the Beilinson-Green algebras from $G$-graded derived equivalences between $G$-graded algebras $A$ and $B$ for a group $G$.

Hence we have the following theorem.

\begin{thm}\label{GBG} Suppose that there is a $G$-graded derived equivalence between $G$-graded algebras $A$ and $B$, then there is a derived equivalence between the Beilinson-Green algebras $\overline{A}$ and $\overline{B}$.
\end{thm}

\begin{proof} Suppose that there is a $G$-graded derived equivalence between $G$-graded algebras $A$ and $B$, by Lemma \ref{gd}, there is a $G$-graded tilting complex $\cpx{T}$ over $A$, and
$B\simeq\End_{\D{R}}(\cpx{ T})^{\mathrm{op}}$ of $G$-graded algebras. That is,
$$
B=\oplus_{g\in G}B_g\simeq\End_{\D{R}}(\cpx{ T})^{\mathrm{op}}=\oplus_{g\in G}\Hom_{\D{R}}(\cpx{ T},\cpx{T}(g))
$$
We claim that $\cpx{T}(g)$ is a locally tilting family over $\overline{A}$.

Indeed, we have the following facts

(1) $\Hom(\cpx{T}(h),\cpx{T}(g)[i])_{g,h\in G}=\Hom(\cpx{T},\cpx{T}(gh^{-1})[i])_{g,h\in G}\cong
\left\{
\begin{array}{cc}
B_{gh^{-1}}=\overline{B}_{gh}, & if\ \  i\neq 0;\\
0,  & if\ i=0.
\end{array}
\right. $

(2) $\thick\{\cpx{T}(g)\}_{g\in G}$ generates $\Grperf{\overline{A}}$ as a triangulated category.
Then by Theorem \ref{ll}, the Beilinsion-Green algebras $\overline{A}$
and $\overline{B}$ are derived equivalent.
\end{proof}

\begin{rem}  In \cite{CO}, the associated Beilinsion-Green algebras are smash product of $G$-graded algebras.
Asashiba \cite{Asa} obtains derived equivalence between smash products from derived equivalences for $G$-graded categories (not necessary graded derived equivalences) under some condition.
\end{rem}

\section{Examples}\label{example}

In this section, we  give an example to illustrate our Theorem \ref{Theorem-ghost}.

Throughout this section, we assume that $A$ is a self-injective Artin algebra, and write, for $n\in \mathbb{N}$, $$\mathcal{D}_{n}:=\add\big(\bigoplus_{i=0}^n\Sigma^{-i}A\big)$$
in $\K{\modc{A}}$. For simplicity, we will write $\mathscr{K}$ for $\K{A\modc}$.  The $i$-th cohomology of a complex $\cpx{C}$ in $\mathscr{K}$ is denoted by $H^i(\cpx{C})$. For an $A$-module $X$, we denote by $X^*$ the right $A$-module $\Hom_A(X, A)$. Let $D$ be the usual duality, and let $\nu_A:=D\Hom_A(-, A)$ be the Nakayama functor. When $P$ is a finitely generated projective $A$-module, there is a natural isomorphism $\Hom_A(P, -)\cong D\Hom_A(-, \nu_AP)$  which can be obtained by applying $D$ to the isomorphism in \cite[p.41, Proposition 4.4(b)]{Auslander1995}. This further induces an isomorphism $\mathscr{K}(\cpx{P}, -)\cong D\mathscr{K}(-,\nu_A\cpx{P})$ {for all bounded complexes} $\cpx{P}$ of finitely generated projective $A$-modules. This will be frequently used in this section.

\begin{lem} \cite[Lemma 5.1]{CH} With notation as above, {the ideals
$\cogh_{\mathcal{D}_n}$ and $\gh_{\mathcal{D}_n}$ in $\mathscr{K}$ are equal, and both of them} consist of morphisms  $\cpx{\alpha}$ such that $H^i(\cpx{\alpha})=0$ for all $0\leq i\leq n$.
\label{lemma-ann-ghost}
\end{lem}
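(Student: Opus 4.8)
The plan is to compute the two ideals $\gh_{{\cal D}_n}$ and $\cogh_{{\cal D}_n}$ inside $\mathscr{K}$ explicitly, in terms of the cohomology functors $H^i$, and then observe that the two descriptions coincide. Since ${\cal D}_n=\add\bigl(\bigoplus_{i=0}^{n}\Sigma^{-i}A\bigr)$ and the Hom groups of $\mathscr{K}$ are additive, a morphism $\cpx{\alpha}\colon\cpx{C}\to\cpx{D}$ in $\mathscr{K}$ lies in $\gh_{{\cal D}_n}$ exactly when $\mathscr{K}(\Sigma^{-i}A,\cpx{\alpha})=0$ for all $0\le i\le n$, and in $\cogh_{{\cal D}_n}$ exactly when $\mathscr{K}(\cpx{\alpha},\Sigma^{-i}A)=0$ for all $0\le i\le n$, so it is enough to understand these two vanishing conditions.

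First I would treat the ghost ideal, which needs nothing beyond the fact that $\Hom_A(A,-)$ is the identity functor: the complex $\Hom_A(A,\cpx{C})$ is $\cpx{C}$ itself, giving a natural isomorphism $\mathscr{K}(\Sigma^{-i}A,\cpx{C})\cong H^i(\cpx{C})$ (with the usual convention $\Sigma=[1]$), under which $\mathscr{K}(\Sigma^{-i}A,\cpx{\alpha})$ is $H^i(\cpx{\alpha})$. Hence $\cpx{\alpha}\in\gh_{{\cal D}_n}$ if and only if $H^i(\cpx{\alpha})=0$ for $0\le i\le n$.

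For the coghost ideal, self-injectivity of $A$ is used twice. Because $A$ is an injective module, $\Hom_A(-,A)$ is exact; applying it to the short exact sequences $0\to H^i(\cpx{D})\to\Coker d^{i-1}\to\Img d^i\to 0$, in which the $d^j$ are the differentials of $\cpx{D}$, and using injectivity once more to extend homomorphisms defined on $\Img d^i$ along the inclusion $\Img d^i\hookrightarrow D^{i+1}$, yields a natural isomorphism $\mathscr{K}(\cpx{D},\Sigma^{-i}A)\cong\Hom_A(H^i(\cpx{D}),A)$, carrying $\mathscr{K}(\cpx{\alpha},\Sigma^{-i}A)$ to $\Hom_A(H^i(\cpx{\alpha}),A)$. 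The same identification can be read off from the duality $\mathscr{K}(\cpx{P},-)\cong D\,\mathscr{K}(-,\nu_A\cpx{P})$ recalled above, applied to the complex $\cpx{P}=\Sigma^{-i}\nu_A^{-1}A$ of projectives (using that $\nu_A$ is an autoequivalence, so $\nu_A^{-1}A$ is projective), together with $\Hom_A(P,-)\cong D\Hom_A(-,\nu_AP)$. Now, since $A$ is self-injective, the regular module ${}_AA$ contains every indecomposable injective module as a direct summand and is thus a cogenerator of $A\modc$, so $\Hom_A(-,A)$ is faithful on $A\modc$; as $H^i(\cpx{C}),H^i(\cpx{D})\in A\modc$, we get that $\Hom_A(H^i(\cpx{\alpha}),A)=0$ is equivalent to $H^i(\cpx{\alpha})=0$. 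Therefore $\cpx{\alpha}\in\cogh_{{\cal D}_n}$ if and only if $H^i(\cpx{\alpha})=0$ for $0\le i\le n$, the same condition obtained for $\gh_{{\cal D}_n}$; hence the two ideals coincide, and both consist precisely of the morphisms $\cpx{\alpha}$ with $H^i(\cpx{\alpha})=0$ for all $0\le i\le n$.

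I expect the only genuinely delicate point to be the identification $\mathscr{K}(\cpx{D},\Sigma^{-i}A)\cong\Hom_A(H^i(\cpx{D}),A)$ on the coghost side: one must check that it is natural in $\cpx{D}$, so that it really intertwines the action of $\cpx{\alpha}$, and one must keep track of where self-injectivity is invoked --- both for the exactness of $\Hom_A(-,A)$ and for the cogenerator property giving faithfulness. The remaining steps are routine manipulations with shifts, cocycles and coboundaries.
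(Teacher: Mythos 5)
Your argument is correct. The paper itself gives no proof of this lemma (it is quoted from \cite[Lemma 5.1]{CH}), but your computation is the natural one and matches the standard argument: the isomorphism $\mathscr{K}(\Sigma^{-i}A,\cpx{C})\cong H^i(\cpx{C})$ handles the ghost side using only projectivity of $A$, while on the coghost side the isomorphism $\mathscr{K}(\cpx{D},\Sigma^{-i}A)\cong\Hom_A(H^i(\cpx{D}),A)$ (via exactness of $\Hom_A(-,A)$ and extension along $\Img d^i\hookrightarrow D^{i+1}$) together with the fact that ${}_AA$ is an injective cogenerator for a self-injective algebra gives the same cohomological criterion, whence the two ideals coincide. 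You correctly isolate the two places where self-injectivity enters, and the naturality of the coghost identification is routine to verify as you indicate.
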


 {\em Remark. } This lemma implies $\Fcogh_{\mathcal{D}_n}$ and $\Fgh_{\mathcal{D}_n}$ also coincide. Denote by $\mathcal{G}$ the ideal of $\mathscr{K}$ consisting of ghost maps.
 Let $\cpx{X}$ be a complex of $A$-modules. Then $\cogh_{\mathcal{D}_n}(\cpx{X})=\gh_{\mathcal{D}_n}(\cpx{X})=\mathcal{G}(\cpx{X})$. Let $\mathcal{G}_{\mathcal{D}_n}:=\mathcal{G}\cap \mathcal{F}_{\mathcal{D}_n}$. {Then $\mathcal{G}_{\mathcal{D}_n}(\cpx{X})=\Fcogh_{\mathcal{D}_n}(\cpx{X})=\Fgh_{\mathcal{D}_n}(\cpx{X})$.}

In the following, we give a concrete example.

\medskip
\begin{ex} Let $k$ be a field, and let $A=k[x,y]/(x^n-y^s,xy)$. Suppose  that  $\cpx{X}$ is the complex
$$0\lra A\xra{\cdot x} A\lra 0$$
with the left $A$ in degree zero.  The endomorphism algebra $\End_{\mathscr{K}/\mathcal{G}}(\cpx{X}\oplus A\oplus \Sigma^{-1}A)$ is denoted by $\Lambda_x$.
The construction above gives a $\mathcal{D}_1$-split triangle
$$\cpx{Y}\lra A\oplus \Sigma^{-1}A\lra \cpx{X}\lra \Sigma\cpx{Y}$$
in $\mathscr{K}$.  An easy calculation shows that $\cpx{Y}$ is isomorphic in $\K{A\modc}$ to the complex
$$0\lra A\xra{\cdot y}A\lra 0.$$
Then the algebras $\Lambda_x=\End_{\mathscr{K}/\mathcal{G}}(\cpx{X}\oplus A\oplus \Sigma^{-1}A)$ and $\End_{\mathscr{K}/\mathcal{G}}(\cpx{Y}\oplus A\oplus \Sigma^{-1}A)$ are derived equivalent.  Note that $\End_{\mathscr{K}/\mathcal{G}}(\cpx{Y}\oplus A\oplus \Sigma^{-1}A)$ is just $\Lambda_y$. That is, the algebra $\Lambda_x$ is derived equivalent to $\Lambda_y$.

 To describe $\Lambda_x$ in terms of  quivers with relations,
we  give some morphisms in $\Lambda_x$.
$$\alpha_1: \xymatrix{0\ar[d]\ar[r] & A\ar[d]^{\cdot x}\\
0\ar[r] & A}\quad  \alpha_2:\xymatrix{A\ar[d]_{\cdot x}\ar[r] & 0\ar[d]\\
A\ar[r] & 0},  $$

$$\beta_1:{}\xymatrix{0\ar[r]\ar[d]&A\ar[d]^{id}\\A\ar[r]^{\cdot x}&A
}, \quad\beta_2:{} \xymatrix{A\ar[r]^{\cdot x}\ar[d]^{id}&A\ar[d]\\A\ar[r]&0},\quad \beta_3:{}
\xymatrix{A\ar[r]^{\cdot x}\ar[d]&A\ar[d]^{\cdot y}\\
0\ar[r]&A},
 \quad \beta_4:{}\xymatrix{A\ar[r]\ar[d]^{\cdot x^y}&0\ar[d]\\A\ar[r]^{\cdot x}&A}.$$

It is easy to see that the above morphisms generate the Jacobson radical of $\Lambda_x$.

\medskip
In this case, the above morphisms are irreducible in $\add(\cpx{X}\oplus A\oplus \Sigma^{-1}A)$  and the algebra $\Lambda_x$ is given by the following quiver with relations.
$$\begin{array}{c}
\xymatrix{{\bullet}\ar@(lu,ld)_(.4){\alpha_1}\ar@<2pt>[r]^{\beta_1}&\bullet\ar@<2pt>[l]^{\beta_3}^(1){1}\ar@<2pt>[r]^{\beta_2}&\bullet\ar@<2pt>[l]^{\beta_4}^(1){2}^(0){3}\ar@(ru,rd)^{\alpha_2}\\
}\\
\alpha_1\beta_1=\beta_3\alpha_1=\alpha_2\beta_4=\beta_1\beta_2=\beta_4\beta_3=\beta_2\alpha_2=0  \\
 \alpha_1^{n}=(\beta_1\beta_3)^s, \,
  \alpha_2^{n}=(\beta_4\beta_2)^s, \\
  (\beta_3\beta_1)^s+(\beta_2\beta_4)^s=0.
\end{array}$$
\end{ex}

\begin{ex}
Let $A=k[x]/(x^n)$. Then $A$ is a representation-finite self-injective algebra.
Denote the indecomposable $A$-module by
$$X_r:=k[x]/(x^r)$$ for $r=1,2,\cdots,n$.
 We thus shows that $\scr G^{\Phi}_{A}(A\oplus X_r)$ and $\scr G^{\Phi}_{A}(A\oplus X_{n-r})$ are derived equivalent with $\Omega(X_r)=X_{n-r}$.

In the following, let $\Phi=\{0,1,2\}$ and $r=1$.
Then we describe $\Phi$-Beilinson-Green algebras $\scr G^{\Phi}_{A}(A\oplus X_1)$ and $\scr G^{\Phi}_{A}(A\oplus X_{n-1})$ in terms of quivers with relations as follows
$$
\begin{array}{c}
 \xymatrix{
 1 \ar@(dl,ul)[]^{\alpha} \ar@/^/[rr]| \beta & & 2 \ar@/^/[ll]|\gamma\ar[d]_{\delta}\ar@/^/[dd]^{\delta''}\\
  3 \ar@(dl,ul)[]^{\alpha'} \ar@/^/[rr]| {\beta'} & & 4 \ar@/^/[ll]|{\gamma'} \ar[d]_{\delta'} \\
  5 \ar@(dl,ul)[]^{\alpha''} \ar@/^/[rr]| {\beta''} & & 6 \ar@/^/[ll]|{\gamma''},  }\\
  \gamma\beta=\gamma\alpha=\alpha\beta=\beta\gamma-\alpha^{n-1}=\beta\delta=\delta\gamma'=0,\\
  \gamma'\beta'=\gamma'\alpha'=\alpha'\beta'=\beta'\gamma'-\alpha'^{n-1}=\beta'\delta'=\delta'\gamma''=0,\\
  \gamma''\beta''=\gamma''\alpha''=\alpha''\beta''=\beta''\gamma''-\alpha''^{n-1}=\delta\delta'-\delta''=0.
  \end{array}
$$
and
$$
\begin{array}{c}
  \xymatrix{
   1\ar@/^/[rr]| x & & 2 \ar@/^/[ll]| y\ar[d]_{\eta}\ar@/^/[dd]^{\eta''}\\
  3 \ar@/^/[rr]|{x'} & & 4 \ar@/^/[ll]| {y'} \ar[d]_{\eta'}  \\
  5 \ar@/^/[rr]|{x''} & & 6 \ar@/^/[ll]| {y''}  } \\
 (xy)^{n-1}= (x'y')^{n-1}=(x''y'')^{n-1}=x\eta=\eta y'=x'\eta'=\eta' y''=\eta\eta'-\eta''=0.
 \end{array}
$$
We can calculate that $\dk(\scr G^{\Phi}_{A}(A\oplus X_1))=3n+12, \dk(\scr G^{\Phi}_{A}(A\oplus X_{n-1}))=12n-6$.
\end{ex}

\noindent{\bf Acknowledgements.} Shengyong Pan is funded by China Scholarship Council. He thanks his mother for her 70th birthday with thankfulness, love and encouragements. The revised version of this paper was done during a visit of Shengyong Pan to the University of Edinburgh, he would like to thank Professor Susan J. Sierra for her hospitality and useful discussions. We thank the anonymous referee for valualbe comments and suggestions, especially for the proof of Theorem \ref{4.1}.

{\footnotesize

\end{document}